\documentclass[11pt]{article}
\usepackage[T1]{fontenc}
\usepackage{lmodern}
\usepackage[colorlinks=true,linkcolor=RoyalBlue,urlcolor=RoyalBlue,citecolor=red]{hyperref}
\usepackage{amsmath,amsthm,amssymb,amsfonts}
\usepackage{enumerate}
\usepackage{epsfig}
\usepackage[dvipsnames,svgnames,table]{xcolor}
\usepackage{pgf,tikz}
\usetikzlibrary{calc}
\usepackage{fullpage}

\usepackage{booktabs}
\usepackage[nameinlink]{cleveref}
\usepackage{enumitem}
\usepackage{blkarray}
\usepackage{tikz-cd}

\usepackage{thmtools, thm-restate}

\newtheorem{theorem}{Theorem}[section]
\newtheorem{proposition}[theorem]{Proposition}
\newtheorem{lemma}[theorem]{Lemma}
\newtheorem{corollary}[theorem]{Corollary}

\newtheorem{claim}[theorem]{Claim}

\theoremstyle{definition}
\newtheorem{definition}[theorem]{Definition}
\newtheorem{example}[theorem]{Example}

\theoremstyle{remark}

\newenvironment{claimproof}[1][\proofname]
	{
		\proof[#1]%
			
	}
	{
		\endproof
	}

\newcommand{\CC}{\mathbb{C}}
\newcommand{\RR}{\mathbb{R}}
\newcommand{\QQ}{\mathbb{Q}}

\newcommand{\FF}{\mathbb{F}}

\DeclareMathOperator{\rank}{rank}

\DeclareMathOperator{\rig}{rig}
\DeclareMathOperator{\fig}{fig}

\DeclareMathOperator{\td}{td}

\newcommand{\added}[1]{{\color{red}#1}}

\tikzstyle{vertex}=[fill=black,circle,inner sep=0pt, minimum size=4pt]
\tikzstyle{edge}=[line width=1.5pt,black]

\title{The $d$-dimensional realisation number of a rigid graph}
\author{Sean Dewar\thanks{Department of Computer Science, KU Leuven. E-mail: \texttt{sean.dewar@kuleuven.be}}  \and Anthony Nixon\thanks{School of Mathematical Sciences, Lancaster University. E-mail: \texttt{a.nixon@lancaster.ac.uk}}  \and Ben Smith\thanks{School of Mathematical Sciences, Lancaster University. E-mail: \texttt{b.smith9@lancaster.ac.uk}}}

\begin{document}
\date{}
\maketitle

\begin{abstract}
Determining the number of (complex) realisations of a rigid graph for a specific choice of edge lengths is a fundamental problem in discrete geometry. 
In this article we provide two new tools for determining realisation numbers in arbitrary dimensions:
(i) we prove that subgraph inclusion translates to realisation number divisibility; and
(ii) we provide lower bounds on realisation numbers under specific graph operations in all dimensions.
We use these methods to prove that every triangulated sphere with $n$ vertices has at least $2^{n-4}$ edge-length equivalent realisations in 3-dimensions, extending a 2-dimensional result of Jackson and Owen in the case of planar graphs.
Additionally, our tools solve a family of conjectures set by Grasegger regarding how 1-extensions, X-replacements, and V-replacements affect realisation numbers.
\end{abstract}

{\small \noindent \textbf{MSC2020:} 52C25, 05C10, 68R12, 14C17}

{\small \noindent \textbf{Keywords:} rigid graph, bar-joint framework, realisation number, complex realisations, vertex splitting, triangulation}

\section{Introduction}\label{sec:intro}

A (bar-joint) framework $(G,p)$ in $\mathbb{R}^d$ is the combination of a finite, simple graph $G=(V,E)$ and a realisation $p:V\rightarrow \mathbb{R}^d$. The framework is \emph{rigid} if the only edge-length-preserving continuous deformations of the vertices arise from isometries of $\mathbb{R}^d$, and is \emph{flexible} otherwise. 
Asimow and Roth \cite{AsimowRothI} showed that rigidity of a graph is a generic property. That is, if there exists one generic realisation $p$ evidencing that $G$ is rigid then it is certain that every generic framework $(G,q)$ is rigid.
Because of this, we say a graph is \emph{$d$-rigid} if there exists a generic rigid framework $(G,p)$ in $\mathbb{R}^d$.

A number of applications require more detailed information such as the following: given a rigid framework, how many edge-length equivalent realisations exist up to isometries of $\mathbb{R}^d$?
For a framework $(G,p)$, we denote this value by $r_d(G,p)$.
The combined results of Connelly \cite{conggr} and Gortler, Healy and Thurston \cite{gortler2010characterizing} show that, if $p$ is generic, whether $r_d(G,p)=1$ depends solely on the underlying graph. We thus say that $G$ is \emph{globally $d$-rigid} if there exists a generic realisation $p$ such that $(G,p)$ is the unique realisation of $G$ in $\mathbb{R}^d$ with the given edge lengths up to isometries.

In general, however, the value $r_d(G,p)$ is not a generic property. 
Indeed, it is not hard to construct small examples of graphs that have different numbers of realisations depending on the edge lengths chosen; see for example \Cref{fig:notgeneric}.
To get around this, we can associate a single `generic realisation number' to a graph in two different ways:
\begin{enumerate}
    \item We define the \emph{real $d$-realisation number} -- here denoted $r_d(G)$ -- to be the largest value of $r_d(G,p)$ over all generic realisations.
    \item We extend $r_d(G,p)$ to additionally count complex realisations, here denoted $c(G,p)$. The value $c(G,p)$ \emph{is} a generic property, hence we define the \emph{(complex) $d$-realisation number} -- here denoted $c_d(G)$ -- to be $c(G,p)$ for any generic $p$.
\end{enumerate}
It is immediate that $c_d(G) \geq r_d(G)$ for all graphs, and it was proven by Gortler and Thurston \cite{Gortler2014} that $r_d(G)=1$ (i.e., $G$ is globally $d$-rigid) if and only if $c_d(G)=1$.

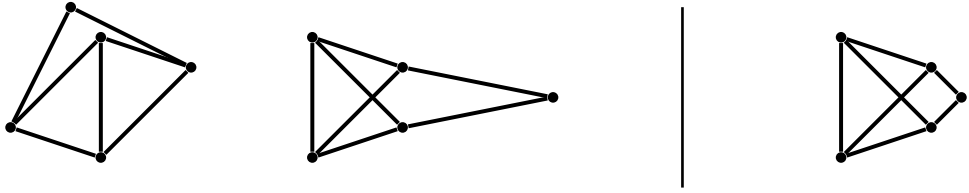
\begin{figure}[tp]
	\begin{center}
        \begin{tikzpicture}[scale=0.8]
        \begin{scope}[xshift=0]
			\node[vertex] (1) at (0,1) {};
			\node[vertex] (2) at (0,-1) {};
			
			\node[vertex] (3) at (-1.5,-0.5) {};
			\node[vertex] (4) at (1.5,0.5) {};
			\node[vertex] (5) at (-0.5, 1.5) {};
			
			\draw[edge] (1)edge(2);
			\draw[edge] (1)edge(3);
			\draw[edge] (1)edge(4);
			\draw[edge] (2)edge(3);
			\draw[edge] (2)edge(4);
			\draw[edge] (5)edge(3);
			\draw[edge] (5)edge(4);
        \end{scope}
		\begin{scope}[xshift=100]
			\node[vertex] (1) at (0,1) {};
			\node[vertex] (2) at (0,-1) {};
			
			\node[vertex] (3) at (1.5,-0.5) {};
			\node[vertex] (4) at (1.5,0.5) {};
			\node[vertex] (5) at (4,0) {};
			
			\draw[edge] (1)edge(2);
			\draw[edge] (1)edge(3);
			\draw[edge] (1)edge(4);
			\draw[edge] (2)edge(3);
			\draw[edge] (2)edge(4);
			\draw[edge] (5)edge(3);
			\draw[edge] (5)edge(4);
		\end{scope}
        \begin{scope}[xshift=275]
            \draw[line width=1pt] (0,1.5) -- (0,-1.5);
		\end{scope}
        \begin{scope}[xshift=350]
			\node[vertex] (1) at (0,1) {};
			\node[vertex] (2) at (0,-1) {};
			
			\node[vertex] (3) at (1.5,-0.5) {};
			\node[vertex] (4) at (1.5,0.5) {};
			\node[vertex] (5) at (2,0) {};
			
			\draw[edge] (1)edge(2);
			\draw[edge] (1)edge(3);
			\draw[edge] (1)edge(4);
			\draw[edge] (2)edge(3);
			\draw[edge] (2)edge(4);
			\draw[edge] (5)edge(3);
			\draw[edge] (5)edge(4);
        \end{scope}
		\end{tikzpicture}
	\end{center}
    \caption{Two different choices of edge lengths for the same graph. The left realisation gives $r_2(G,p) = 4$: the two realisations pictured plus two more via reflecting the degree 2 vertex through the line passing through its neighbours. The right realisation only gives $r_2(G,p)=2$: again, the additional realisation can be found via reflecting the degree 2 vertex.}\label{fig:notgeneric}
\end{figure}

Outside of the almost-trivial case of $d=1$,
both real and complex realisation numbers are often difficult to compute.
The complex realisation number can be viewed as the number of solutions to a system of quadratic equations, and using black-box methods such as Gr\"{o}bner bases to compute $c_d(G)$ very quickly becomes computationally untenable.
A combinatorial algorithm for computing $c_2(G)$ for minimally 2-rigid graphs is known \cite{cggkls},
which was recently extended to all 2-rigid graphs \cite{dgstw25}.
Unfortunately, no such combinatorial algorithms are known for computing $c_d(G)$ when $d \geq 3$ or $r_d(G)$ when $d \geq 2$.
Many techniques have been developed over the last two decades to provide exact values and bounds for both $c_d(G)$ and $r_d(G)$:
see for example \cite{BARTZOS2021189,BartzosUpper,BS04,JO19,Stef10}.

\subsection{Our contributions}

Our main contributions on the topic can be split into two main subtopics.

\subsubsection{Subgraph realisation numbers and divisibility}

It is easy to see that, if $H$ is a $d$-rigid spanning subgraph of a $d$-rigid graph $G$, then $c_d(G) \leq c_d(H)$.
We prove that this inequality is actually due to the divisibility of $c_d(H)$ by $c_d(G)$.

\begin{restatable}{theorem}{gcd}\label{thm:gcd}
    Let $G$ be a $d$-rigid graph on at least $d+1$ vertices.
    If $H$ is a spanning $d$-rigid subgraph of $G$,
    then $c_d(G) \mid c_d(H)$.
\end{restatable}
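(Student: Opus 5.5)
We will prove the statement by exhibiting a free group action that permutes the complex realisations of $H$ and whose orbits are exactly the fibres over realisations of $G$. Work with the complex edge-measurement maps $f_G:\CC^{dn}\to\CC^{E(G)}$ and $f_H:\CC^{dn}\to\CC^{E(H)}$, where $f(p)_{uv}=\sum_i (p_u^i-p_v^i)^2$. The relevant group is the complex orthogonal group together with translations, and a realisation of $(G,p)$ is counted up to this group. Since $H$ is spanning, $f_H$ is the composition of $f_G$ with the coordinate projection $\pi:\CC^{E(G)}\to\CC^{E(H)}$. Fix a generic $p$ and let $S_H$ be the set of congruence classes of realisations $q$ with $f_H(q)=f_H(p)$. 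Then $|S_H|=c_d(H)$, and $S_G\subseteq S_H$ is the set of those classes with $f_G(q)=f_G(p)$.

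Partition $S_H$ by the value of $f_G(q)$. The values $f_G(q)$ for $[q]\in S_H$ form a finite set $\{y_1,\dots,y_k\}$ in the fibre $\pi^{-1}(f_H(p))\cap \overline{f_G(\CC^{dn})}$, and
\[
c_d(H)=\sum_{j=1}^k |\{[q]\in S_H : f_G(q)=y_j\}|.
\]
It suffices to show that each class $\{[q] : f_G(q)=y_j\}$ has exactly $c_d(G)$ elements. This holds if every $y_j$ is a generic point of the measurement variety $M_G=\overline{f_G(\CC^{dn})}$, because the fibre count of the $G$-system is constant, equal to $c_d(G)$, at generic points of $M_G$. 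The count is counted with no multiplicity: genericity makes fibres reduced, since the map is étale at regular points of a rigid graph.

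The key step is showing every $y_j$ is generic in $M_G$. Since $H$ and $G$ are both $d$-rigid, $\dim M_G=\dim M_H=dn-\binom{d+1}{2}$ (we need $n\ge d+1$ here, which is the vertex hypothesis). Hence $\pi|_{M_G}:M_G\to M_H$ is a dominant map between irreducible varieties of the same dimension, so it is generically finite. Take $p$ generic, so that $f_H(p)$ is a generic point of $M_H$, lying outside any proper subvariety defined over $\QQ$. The preimages under $\pi|_{M_G}$ of such a generic point are all generic points of $M_G$: any $\QQ$-subvariety $Z\subsetneq M_G$ has $\dim \pi(Z)<\dim M_H$, so $f_H(p)\notin\overline{\pi(Z)}$. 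Applying this with $Z$ equal to the non-generic locus for the $G$-fibre count gives the claim. Equivalently, $\pi|_{M_G}$ is generically a finite covering. Each $y_j$ therefore has exactly $c_d(G)$ congruence classes of realisations, so $c_d(H)=k\cdot c_d(G)$.

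The main obstacle is the genericity transfer step. Two things need care there. First, we must set up "generic" over $\QQ$, or some countable field, so that $Z$ and its image are defined over that field. Second, we must check that every point of $f_G(\CC^{dn})$ in the fibre is hit by actual realisations with the right count, including points in the closure only. For the latter, I would first try to use that points of $M_G$ outside the proper closed subset $M_G\setminus f_G(\CC^{dn})$ avoid the issue, by the same dimension argument. I would also need to check that the quotient by congruences behaves uniformly, for example by pinning vertices to kill the group action. Pinning is where $n\ge d+1$ matters: it ensures the stabilisers of generic configurations are trivial.
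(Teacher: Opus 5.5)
Your partition of the $H$-classes by the value of $f_{G,d}$ is the paper's partition into $\sim_G$-classes, and your argument is correct. The difference is in how you show that each block has $c_d(G)$ elements.

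The paper transfers genericity to the \emph{realisations}. By \Cref{lem:rigid+generic+implies+quasi} applied to $H$, every $q$ with $f_{H,d}(q)=f_{H,d}(p)$ is quasi-generic, so \Cref{thm:generic-implies-general} gives $|C_d(G,q)|=c_d(G)$ at once, without ever looking at $\ell_d(G)$.

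You instead transfer genericity to the \emph{edge-length vector}. The coordinate projection $M_G=\ell_d(G)\to\ell_d(H)=M_H$ is a map of irreducible $\QQ$-varieties of equal dimension, so no proper $\QQ$-closed subset of $M_G$ meets the fibre over the generic point $f_{H,d}(p)$. That step is sound. However, it shifts the burden onto a statement you only assert: that every $\QQ$-generic point of $M_G$ carries exactly $c_d(G)$ congruence classes. This is the target-side analogue of the paper's appendix results and needs a line of proof.

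The quickest proof avoids pinning altogether. By the definition of $c_d(G)$ and Chevalley's theorem, the set of $y\in M_G$ with exactly $c_d(G)$ classes contains a dense open subset. This set is invariant under $\mathrm{Aut}(\CC/\QQ)$, since such automorphisms commute with $f_{G,d}$ and preserve congruence. Hence the closure of its complement is a proper closed subset of $M_G$ defined over $\QQ$, and each $y_j$ avoids it.

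Your worries about points lying only in the closure, and about multiplicities, are moot. The $y_j$ are values of $f_{G,d}$ by construction, and you are counting classes rather than weighted solutions.

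What your route buys is a geometric meaning for the quotient. The same argument shows that every point of the fibre of $M_G\to M_H$ over $f_{H,d}(p)$ lies in the image of $f_{G,d}$. So $c_d(H)/c_d(G)$ is the degree of this projection, paralleling the factor $\deg h$ in the proof of \Cref{thm:isosubreal}. The paper's route is shorter, given its appendix lemmas.
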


We also prove that, under specific circumstances, the realisation  of a $d$-rigid (but not necessarily spanning) subgraph divides the realisation number of the larger $d$-rigid graph.
This result is a generalisation of a recent result of Grasegger \cite[Lemma 3]{gras25}, who proved the statement for minimally $d$-rigid graphs.

\begin{restatable}{theorem}{isosubreal}\label{thm:isosubreal}
    Let $G$ and $H$ be $d$-rigid graphs with at least $d+1$ vertices such that $H$ is a subgraph of $G$.
    Further suppose that for some minimally $d$-rigid subgraph $\widetilde{H}$ of $H$,
    we have that $G - E(H) + E(\widetilde{H})$ is minimally $d$-rigid.
    Then $c_d(H) \mid c_d(G)$.
\end{restatable}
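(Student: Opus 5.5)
Let $G' = G - E(H) + E(\widetilde{H})$, which is minimally $d$-rigid by hypothesis. The plan is a fibration argument over realisations of the subgraph. Since $G'$ is minimally $d$-rigid, its edge measurement map $f_{G'}$ (modulo the complex isometry group) is a dominant, generically finite map between equidimensional complex varieties. So a generic choice of edge lengths for $G'$ is the same as a generic choice of lengths for $\widetilde{H}$ together with independent generic lengths for the edges of $E(G)\setminus E(H)$. In particular the lengths on $\widetilde{H}$ are generic, and by \Cref{thm:gcd} (or directly) the fibre structure over $\widetilde{H}$ is well behaved.

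Fix a generic realisation $p$ of $G$ and consider the $c_d(G)$ equivalence classes of complex realisations $q$ of $G$ with the same edge lengths as $(G,p)$. Restricting each such $q$ to $V(H)$ gives a complex realisation of $H$ equivalent to $p|_{V(H)}$. The aim is to show that this restriction map is surjective onto the $c_d(H)$ classes of realisations of $H$, and that all of its fibres have the same cardinality $N$. Then $c_d(G) = N\cdot c_d(H)$, which gives the divisibility.

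\textbf{Surjectivity and equal fibres.} Take two equivalent complex realisations $q_1, q_2$ of $H$, both equivalent to $p|_{V(H)}$. After composing with a complex isometry we may assume they agree on a generic affinely spanning set of $d+1$ vertices; this is possible since $H$ has at least $d+1$ vertices. The key point is that extending $q_i$ to all of $G$ only depends on the edges outside $H$ and on the positions $q_i(V(H))$. The extension problem is: given positions of $V(H)$, find positions of $V(G)\setminus V(H)$ satisfying the lengths of $E(G)\setminus E(H)$. This is the fibre of the map $f_{G'}$ restricted to a fixed realisation of $\widetilde{H}$. Because $G'$ is minimally rigid and $q_i|_{V(H)}$ is itself a generic realisation of $\widetilde{H}$, these fibres are finite and nonempty. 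By genericity, and because each $q_i$ is a generic point (its coordinates are algebraically as generic as $p$'s, being conjugate under a Galois-type field automorphism fixing the edge lengths), the number of extensions is the same for each $q_i$.

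To make "same number" rigorous, I will argue as follows. The field $\QQ(\text{edge lengths of } G)$ has an automorphism group over which the realisations $q_i|_{V(H)}$ are conjugate as generic points of the same irreducible component. The worry is that they need not lie on the same irreducible component. I will instead use that each $q_i$ is generic (its coordinates have transcendence degree $d|V(H)| - \binom{d+1}{2}$ after normalisation), and that the fibre cardinality of the dominant map $f_{G'}$ over generic points of $\widetilde{H}$-configurations is constant on a Zariski-open dense set defined over $\QQ$. Any generic point lies in that open set.

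\textbf{Main obstacle.} I expect the main difficulty to be this last step: justifying that every realisation of $H$ equivalent to a generic one is itself generic enough for the extension count to be constant. This needs care because $H$ is not minimally rigid, so the fibre of $f_H$ is not obviously made of generic points. I would handle it by noting that each $q_i$ has the same edge lengths as $p$ on $\widetilde{H}$. Since $\widetilde{H}$ is minimally rigid, $q_i|_{V(H)}$ is algebraic over the field generated by those lengths, and that field has transcendence degree equal to the full dimension. Hence $q_i$ is generic, and the open-set argument applies. Finally, the extra edges of $G$ outside $H$ carry independent generic lengths, so they impose no extra coincidences; this is exactly what the minimal rigidity of $G'$ guarantees.
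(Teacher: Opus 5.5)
Your proposal is correct. It uses the same fibration as the paper: your extension count $N$ is exactly $\deg h$ for the paper's map $h$, whose fibre over $q\in X_{H,d}$ is the set of extensions of $q$ to $G$ with the lengths $\lambda:=f_{G-E(H),d}(p)$ of $E(G)\setminus E(H)$ held fixed.

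The two routes differ at the key step. The paper proves $h$ is generically finite by a dimension count on $\widetilde Z_p$ and a Jacobian rank computation using the flexes $u_{xy}$ of $\widetilde G-xy$ (its $\widetilde G$ is your $G'$). It then gets $c_d(G)=c_d(H)\deg h$ from multiplicativity of degrees for $\tilde f_{H,d}\circ h$, working throughout with general points. You instead use generic points and transcendence degree:
\begin{itemize}
\item for a pinned representative $q_i$ of any class in $C_d(H,p|_{V(H)})$ we have $\QQ(f_{G',d}(p))\subseteq\QQ(q_i,\lambda)$;
\item the left-hand field has transcendence degree $|E(G')|=\dim X_{H,d}+|E(G)\setminus E(H)|$ (by \Cref{lem:generic+can+pos});
\item so $(q_i,\lambda)$ is a generic point of $X_{H,d}\times\CC^{E(G)\setminus E(H)}$, and hence lies in the $\QQ$-definable open set over which the extension map has constant positive fibre size.
\end{itemize}
Your route buys an explicit proof that \emph{every} class of $C_d(H,p|_{V(H)})$, not merely a general one, has the same number of extensions; the paper gets this only implicitly from degree bookkeeping over a general point. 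It also avoids the tangent-space and non-singularity analysis of $\widetilde Z_p$. The generic finiteness you need comes cheaply: the extension fibre through a generic $p$ lies inside the finite fibre $\tilde f_{G',d}^{-1}(\tilde f_{G',d}(p))$, and the source and target of the extension map both have dimension $|E(G')|$. In exchange, the paper's route stays inside its general-point framework and shows, via the flexes $u_{xy}$, exactly where minimal rigidity of $G'$ enters.

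Some sentences in your write-up need repair.
\begin{itemize}
\item You claim that after an isometry two equivalent realisations of $H$ agree on an affinely spanning set of $d+1$ vertices. This is false when they are non-congruent, since distances between non-adjacent vertices of $H$ change. It is not used later, so replace it by taking pinned representatives in $X_{H,d}$ with the pinned vertices in $V(H)$ (\Cref{lem:quasixgd}). Then note that each $q_i$ affinely spans $\CC^d$, so only the identity isometry fixes it pointwise. Hence the classes of $C_d(G,p)$ lying over $[q_i]$ correspond bijectively to extensions of that fixed representative.
\item Your earlier sentence derives finiteness and nonemptiness from genericity of $q_i|_{V(H)}$ alone, which is insufficient. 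It is the joint genericity with $\lambda$ that does the work; your final paragraph essentially supplies this.
\item The references to \Cref{thm:gcd} and to Galois conjugacy play no role and can be dropped.
\end{itemize}
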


As an application of these results, we characterise how the realisation number changes under \emph{rigid subgraph substitution} (\Cref{cor:rigidsubgraphsub}).
We then use this to prove Conjectures 1, 3 and 4 of Grasegger \cite{gras25},
which describe conditions for which 1-extensions, X-replacements and V-replacements exactly double the realisation number (\Cref{cor:1extspecial,cor:xvreplacement}).

\subsubsection{Behaviour of realisation numbers under certain graph operations}

In \cite{JO19},
Jackson and Owen explored how different graph operations affected both the real and complex 2-realisation numbers of 2-rigid graphs.
Two important cases they explored were \emph{2-dimensional 0-extensions} and \emph{2-dimensional vertex splits}:
see \Cref{sec:operations} for descriptions of these operations.
We extend these results to all dimensions in both the real and complex case (\Cref{prop:0extreal}, \Cref{thm:vertexsplit}, \Cref{thm:vertexsplitreal}).
We additionally provide lower bounds for the \emph{$d$-dimensional spider-split} (\Cref{thm:spidersplit}, \Cref{thm:spidersplitreal}).


Jackson and Owen \cite{JO19} proved that when a graph $G$ is both minimally 2-rigid and planar, the inequality $c_2(G) \geq r_2(G) \geq 2^{|V| - 3}$ holds.
The proof combines lower bounds provided by 2-dimensional vertex splitting with a known construction of planar minimally 2-rigid graphs via 2-dimensional vertex splitting.
Utilising our results on higher-dimensional graph operations, we approach the 3-dimensional variant of this in a similar way.

Building on work of Cauchy \cite{cauchy1813}, Gluck \cite{gluck} proved that triangulated spheres are minimally 3-rigid.
Moreover, Steinitz \cite{steinitz} proved that every triangulated sphere can be constructed from $K_4$ using 3-dimensional vertex splits.
By combining these observations with our own results regarding vertex splitting, we derive a lower bound on the realisation number of a triangulated sphere in 3-space.

\begin{restatable}{theorem}{triangulatedsphere}\label{thm:triangulatedsphere}
    Let $G = (V,E)$ be a triangulated sphere.
    Then $c_3(G) \geq r_3(G) \geq 2^{|V| - 4}$.
\end{restatable}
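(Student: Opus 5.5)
The proof goes by induction on $|V|$, following the strategy the introduction attributes to Jackson and Owen in the planar case. The two inputs are Steinitz's theorem, that every triangulated sphere arises from $K_4$ by a sequence of 3-dimensional vertex splits, and the real lower bound for vertex splitting (\Cref{thm:vertexsplitreal}). The inequality $c_3(G)\geq r_3(G)$ is immediate from the definitions, since real realisations are among the complex ones. So it suffices to show $r_3(G)\geq 2^{|V|-4}$.

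\emph{Base case.} For $G=K_4$ we have $|V|=4$. Generic edge lengths determine a tetrahedron up to isometry, so $r_3(K_4)=1=2^0$.

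\emph{Inductive step.} Let $G$ be a triangulated sphere with $|V|\geq 5$. By Steinitz's theorem, $G$ is obtained from a triangulated sphere $G'$ with $|V|-1$ vertices by a single 3-dimensional vertex split. In the triangulation setting this splits a vertex $v$ into $v_1,v_2$ joined by a new edge. The two faces adjacent to the new edge give exactly $d-1=2$ common neighbours of $v_1$ and $v_2$, which is the form of vertex split in \Cref{thm:vertexsplit} and \Cref{thm:vertexsplitreal}. By Gluck's theorem both $G'$ and $G$ are minimally 3-rigid. I would then apply \Cref{thm:vertexsplitreal} to get $r_3(G)\geq 2\,r_3(G')$. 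The induction hypothesis gives $r_3(G')\geq 2^{|V|-5}$, and combining the two yields $r_3(G)\geq 2^{|V|-4}$.

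\emph{Main obstacle.} The critical step is checking that each Steinitz split meets the hypotheses of the doubling bound in \Cref{thm:vertexsplitreal}. Such a theorem typically requires that $v$ has enough neighbours outside the two common neighbours on each side, or that the split is non-degenerate, so that $v_1,v_2$ each have degree at least $d+1=4$ in $G$. In a triangulated sphere every vertex has degree at least 3. Degree 3 occurs precisely when the split is really a 0-extension adding a degree-3 vertex on a face. In that case I would instead use \Cref{prop:0extreal}, which should give $r_3(G)\geq 2r_3(G')$ through the reflection of the new vertex in the plane spanned by its three neighbours. With these two cases covering every Steinitz split, the induction goes through.
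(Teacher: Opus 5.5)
Your proof is correct and follows the paper's argument: use Steinitz's edge contractions to reduce to $K_4$, then apply the doubling bound of \Cref{thm:vertexsplitreal} at each of the $|V|-4$ vertex splits. The paper also cites \Cref{thm:vertexsplit} to get the bound on $c_3$ directly, whereas you deduce it from $c_3 \geq r_3$, which works equally well. Your degree-3 case split is unnecessary, because the paper's definition of vertex splitting allows $N_1$ or $N_2$ to be empty and \Cref{thm:vertexsplitreal} has no non-degeneracy hypothesis. Your fallback to \Cref{prop:0extreal} in that case is nevertheless valid.
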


We additionally extend \Cref{thm:triangulatedsphere} to minimally 3-rigid projective planar graphs (\Cref{thm:pp}).

\subsection{Layout of paper}

We first review notation and terminology, and collect preliminary results from rigidity theory and algebraic geometry in \Cref{sec:prelims}.
In \Cref{sec:divide}, we prove our division theorems: \Cref{thm:gcd} and \Cref{thm:isosubreal}.
In \Cref{sec:operations} we then analyse the effect of graph operations on the $d$-dimensional realisation number.
In \Cref{sec:applications}, we use the theory built up in the previous sections to derive lower bounds on the realisation number of a triangulated sphere (\Cref{thm:triangulatedsphere}), characterise the effect of rigid subgraph substitution on realisation numbers (\Cref{cor:rigidsubgraphsub}) and resolve several conjectures on which 1-extensions, X-replacements and V-replacements exactly double the realisation number (\Cref{cor:1extspecial,cor:xvreplacement}).

\section{Preliminaries}
\label{sec:prelims}

We now fix $\mathbb{F}$ to be either the field of real numbers or the field of complex numbers.
We will equip each vector space $\mathbb{F}^d$ with the quadratic form $|| (x_1,\ldots,x_d) ||^2 := \sum_{i=1}^d x_i^2$.
The notation here is intentional, as when $\mathbb{F}=\mathbb{R}$ our chosen quadratic form is exactly the square of the standard Euclidean norm.
The isometries of the quadratic space $(\mathbb{F}^d,\|\cdot\|^2)$ are exactly the affine transformations $x \mapsto Ax +b$ where $b \in \mathbb{F}^d$ and $A \in O(d,\mathbb{F})$, i.e., the group of $d \times d$ matrices with entries in $\mathbb{F}$ where $M^{\top} M = M M^{\top} = I_d$.

\subsection{Rigidity theory for real and complex frameworks}

Recall that a \emph{$d$-dimensional framework} $(G,p)$ in $\mathbb{F}^d$ is the combination of a finite, simple graph $G=(V,E)$ and a realisation $p:V\rightarrow \mathbb{F}^d$.
The linear space of all such realisations of $G$ is denoted by $(\mathbb{F}^d)^V$.
We say that two realisations $p,q$ of $G$ in $\mathbb{F}^d$ are \emph{equivalent} if $||p(v) - p(w)||^2 = ||q(v) - q(w)||^2$ holds for every edge $vw \in E$.
Furthermore, we say that two realisations $p,q$ of $G$ in $\mathbb{F}^d$ are \emph{congruent} (denoted $p \sim q$) if there exists an isometry of $\mathbb{F}^d$ that sends $p$ to $q$.
For the case where $\mathbb{F}=\mathbb{R}$, this is equivalent to the condition that $||p(v) - p(w)|| = ||q(v) - q(w)||$ holds for all $v, w \in V$.

A \emph{continuous flex} of a framework $(G,p)$ in $\mathbb{F}^d$ is a continuous path $\rho :(-\varepsilon,\varepsilon) \rightarrow (\mathbb{F}^d)^V$ where $\rho_0=p$ and each framework $(G,\rho_t)$ is equivalent to $(G,p)$. We say a continuous flex $\rho$ is \emph{trivial} if $\rho_t \sim p$ for all $t \in (-\varepsilon,\varepsilon)$.
We now say $(G,p)$ is \emph{rigid} if every continuous flex of $(G,p)$ is trivial.

We will be interested in the rigidity properties that hold for `almost all realisations' of $G$.
A realisation $p$ is said to be \emph{generic} if the set of coordinates of $p$ forms an algebraically independent set over $\mathbb{Q}$ of $d|V|$ elements.
Any realisation that is congruent to a generic realisation is said to be \emph{quasi-generic}.

Determining whether a real framework is rigid is NP-hard \cite{Abbot}.
However, when a framework $(G,p)$ is generic, we can reduce this problem to linear algebra by performing a standard linearisation technique on the length constraints.
More precisely, we define the \emph{rigidity matrix} $R(G,p)$ of a $d$-dimensional framework $(G,p)$ to be the $|E|\times d|V|$ matrix whose rows are indexed by the edges and $d$-tuples of columns indexed by the vertices. The row for an edge $e=uv$ is given by:
$$
\begin{bmatrix} 
   ~ 0 & \cdots & 0 & p(u)-p(v) & 0 & \cdots & 0 & p(v)-p(u) & 0 & \cdots & 0 ~
\end{bmatrix}
$$ 
where $p(u)-p(v)$ occurs over the $d$-tuple of columns indexed by $u$ and $p(v)-p(u)$ occurs in the $d$-tuple of columns indexed by $v$.
Maxwell \cite{Maxwell} observed that for real frameworks,
$\mbox{rank } R(G,p)\leq d|V|-\binom{d+1}{2}$ whenever $p$ affinely spans $\mathbb{R}^d$;
moreover, this observation can be extended to complex frameworks.
We say that $(G,p)$ is \emph{infinitesimally rigid} if $\mbox{rank } R(G,p) = d|V|-\binom{d+1}{2}$, or $(G,p)$ is a simplex (i.e., $G=K_n$ for $n \leq d+1$) with affinely independent vertices.

A fundamental result of Asimow and Roth \cite{AsimowRothI} tells us that, when $p$ is generic and $G$ has at least $d+1$ vertices, $(G,p)$ is rigid in $\mathbb{R}^d$ if and only if it is infinitesimally rigid;
additionally, this result can be easily extended to complex frameworks.
The set of realisations in $(\mathbb{F}^d)^V$ which are not infinitesimally rigid frameworks is an algebraic set defined by polynomials with rational coefficients, and hence both rigidity and infinitesimal rigidity are generic properties for frameworks in $\mathbb{F}^d$.
We observe here that a framework in $\mathbb{R}^d$ is infinitesimally rigid when considered as a framework in $\mathbb{R}^d$ if and only if it is infinitesimally rigid when considered as a framework in $\mathbb{C}^d$.
Hence, given generic frameworks $(G,p)$, $(G,q)$ in $\mathbb{R}^d$ and $\mathbb{C}^d$ respectively, we have that $(G,p)$ is (infinitesimally) rigid if and only if $(G,q)$ is (infinitesimally) rigid.
This motivates the following definition:

\begin{definition}
    We say that a graph $G=(V,E)$ is \emph{$d$-rigid} if there exists a generic framework $(G,p)$ in either $\mathbb{R}^d$ or $\mathbb{C}^d$ that is rigid.
    If, in addition, $G-e$ is not $d$-rigid for any edge $e \in E$, then we say that $G$ is \emph{minimally $d$-rigid}.
\end{definition}


The last bit of terminology we require from classical rigidity theory is the following.
A framework $(G,p)$ in $\FF^d$ is \emph{globally rigid} if every equivalent framework to $(G,p)$ is congruent to $(G,p)$.
It was proven by Connelly \cite{conggr} and Gortler, Healy and Thurston \cite{gortler2010characterizing} that a single generic framework $(G,p)$ in $\RR^d$ is globally rigid if and only if every generic framework $(G,q)$ in $\RR^d$ is globally rigid.
Gortler and Thurston \cite{Gortler2014} later extended this to frameworks in $\CC^d$, and further proved that a generic globally rigid framework in $\RR^d$ remains globally rigid when considered as a framework in $\CC^d$.
Since every generic framework in $\RR^d$ is also a generic framework in $\CC^d$, the following definition is consistent:
a graph $G$ is \emph{globally $d$-rigid} if and only if every (equivalently, some) generic framework $(G,p)$ in $\RR^d$ (equivalently, $\CC^d$) is globally rigid.

\subsection{Dominant maps}

Throughout, we use the convention that an \emph{algebraic set} is the zero set of polynomials, and an \emph{algebraic variety} is an irreducible algebraic set (i.e. cannot be written as the union of finitely many non-trivial algebraic sets). 
Given a morphism $f:X \rightarrow Y$, we write $df(p)$ for its derivative at a point $p \in X$, i.e. the linear map given by the Jacobian of $f$ from the tangent space of $X$ at $p$ to the tangent space of $Y$ at $f(p)$.

Recall that the \emph{Zariski topology} on $\CC^n$ is the topology whose closed sets are the algebraic sets.
In particular, every non-empty open set is dense in $\CC^n$.
A morphism $f:X \rightarrow Y$ between algebraic sets $X \subseteq \CC^m$ and $Y \subseteq \CC^n$ is \emph{dominant} if $f(X)$ is Zariski dense in $Y$.
We say $f$ is \emph{generically finite} if it is dominant and there exists a Zariski open subset $U \subset Y$ where $f^{-1}(q)$ is finite for each $q \in U$.

We will make repeated use of the following standard result on dominant morphisms.
This is also known as (the algebraic version of) Sard's Theorem.

\begin{lemma}[{\cite[Theorem 17.3]{Borel1991}},{{\cite[Section 8, Corollary 1]{mumford}}}]\label{lem:dominant}
Let $X \subseteq \CC^m$ be an algebraic set and $Y \subseteq \CC^n$ be a variety.
Then the following are equivalent for any morphism $f \colon X \rightarrow Y$:
\begin{enumerate}
\item $f$ is dominant,
\item For some irreducible component $X'$ of $X$, there exists a point $p \in X'$ such that $p$ is a non-singular point of $X'$ and $\rank df(p) = \dim Y$,
\item There exists a Zariski open subset $U \subset X$ where for each $p \in U$, we have $p$ is a non-singular point of $X$ and $\rank df(p) = \dim Y$.
\end{enumerate}
Moreover, if $X$ is irreducible then there exists a non-empty Zariski open subset $V \subset f(X) \subset Y$ such that 
for every $q \in V$, every irreducible component of the algebraic set $f^{-1}(q)$ has dimension $\dim X - \dim Y$.
\end{lemma}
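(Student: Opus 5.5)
The statement of \Cref{lem:dominant} is a standard result, so my plan is to prove the three equivalences and the fibre-dimension claim from basic facts: Chevalley's theorem, generic smoothness in characteristic zero, and the dimension theory of morphisms. I read the open set $U$ in (3) as non-empty. Throughout, $X = X_1 \cup \dots \cup X_k$ denotes the decomposition of $X$ into irreducible components.

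\textbf{(3) $\Rightarrow$ (2).} This is immediate. A non-empty open $U\subset X$ meets some component $X'$ outside all other components. A point $p$ there that is non-singular in $X$ is non-singular in $X'$, and it has $\rank df(p)=\dim Y$.

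\textbf{(2) $\Rightarrow$ (1).} I would argue analytically.
\begin{enumerate}
\item Near the non-singular point $p$, the set $X'$ is a complex manifold of dimension $\dim X'$.
\item Since $\rank df(p) = \dim Y$, the map $f$ has constant rank at least $\dim Y$ on a neighbourhood of $p$ (lower semicontinuity of rank). By the constant rank theorem, $f(X')$ contains a complex submanifold of dimension $\dim Y$.
\item The Zariski closure $\overline{f(X')}$ is therefore an irreducible subvariety of $Y$ of dimension at least $\dim Y$. Since $Y$ is irreducible, $\overline{f(X')}=Y$, so $f$ is dominant.
\end{enumerate}

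\textbf{(1) $\Rightarrow$ (3).} This is the main step.
\begin{enumerate}
\item Since $Y$ is irreducible and $\overline{f(X)}=\bigcup_i \overline{f(X_i)}$, some component $X'=X_i$ already has $f|_{X'}$ dominant.
\item Dominance gives an inclusion of function fields $\CC(Y)\subseteq\CC(X')$. In characteristic zero this extension is separably generated, so the pullback $\Omega_{\CC(Y)}\otimes \CC(X')\to\Omega_{\CC(X')}$ is injective.
\item Equivalently, the Jacobian of $f$ restricted to $T_pX'$ has rank $\operatorname{trdeg}\CC(Y)=\dim Y$ at the generic point of $X'$.
\item The condition $\rank df(p)\ge \dim Y$ is the non-vanishing of some minor, hence an open condition. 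It is non-empty because it holds at the generic point.
\item Intersect this set with the non-singular locus of $X'$ (open and dense) and with the complement of the other components. The result is the required non-empty open $U$.
\end{enumerate}
I expect this generic smoothness argument to be the main obstacle. It is exactly where characteristic zero is used, and it is why the cited references are needed.

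\textbf{Fibre dimension.} Assume $X$ is irreducible.
\begin{enumerate}
\item Choose a dense open affine $Y_0\subset Y$ over which, after Noether normalisation relative to $\CC[Y_0]$, the map $f$ factors as a finite surjection onto $Y_0\times\CC^{\dim X-\dim Y}$ followed by projection.
\item Over such points, every component of the fibre is finite over $\CC^{\dim X-\dim Y}$, so it has dimension at most $\dim X-\dim Y$.
\item By the upper semicontinuity theorem for fibre dimension, every component of every non-empty fibre has dimension at least $\dim X-\dim Y$.
\item By Chevalley's theorem $f(X)$ is constructible and dense, so it contains a dense open subset of $Y$. Intersecting with $Y_0$ gives the required set $V \subset f(X)$.
\end{enumerate}
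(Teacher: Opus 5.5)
The paper gives no proof of this lemma; it is taken directly from Borel and Mumford. Your outline is a correct reconstruction of the standard arguments behind those citations: separability of $\CC(X')/\CC(Y)$ in characteristic zero for the differential criterion, and relative Noether normalisation plus the dimension-of-fibres theorem for the last clause. What you gain is self-containment rather than a new idea. A few points should be tightened.

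\begin{itemize}
\item In (1)$\Rightarrow$(3) you only obtain an open set where $\rank df(p)\ge\dim Y$, while (3) demands equality. Intersect $U$ with $f^{-1}(Y_{\mathrm{reg}})$; this meets $X'$ in a non-empty open set because $f|_{X'}$ is dominant, and there $\rank df(p)\le\dim T_{f(p)}Y=\dim Y$. Alternatively, observe that on the smooth locus of $X'$ the rank never exceeds its generic value $\dim Y$.
\item In (2)$\Rightarrow$(1), lower semicontinuity gives rank at least $\dim Y$ near $p$, not constant rank. Apply the constant rank theorem on the open subset where the rank is maximal. Note also that your final step uses the comparison between analytic and algebraic dimension: an algebraic set of dimension $k$ contains no complex submanifold of dimension $>k$.
\item The ``moreover'' clause tacitly needs $f$ to be dominant, and you use this without saying so. Given dominance, the surjectivity of your finite map onto $Y_0\times\CC^{\dim X-\dim Y}$ already yields $Y_0\subseteq f(X)$, so Chevalley's theorem is superfluous.
\item The lower bound on fibre dimensions is the Krull-type dimension-of-fibres theorem, not upper semicontinuity as such.
\end{itemize}
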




    Given a property $P$, if there exists a Zariski open subset $U \subset \mathbb{C}^n$ of points where property $P$ holds, we call any point in $U$ a \emph{general point (with respect to $P$)}.
    This is an extremely natural notion of genericity in algebraic geometry, and will be useful for us at several points.
    However, it is subtly different to the notion of a generic framework from rigidity theory.
    As such, we reserve the term \emph{generic point} for a point $p \in (\mathbb{C}^d)^V$ whose coordinates are algebraically independent over $\QQ$.
    In particular, $p \in (\mathbb{C}^d)^V$ is a generic point if and only if $(G,p)$ is a generic framework.

\subsection{Realisation numbers}

Given a graph $G = (V,E)$ that is $d$-rigid, we wish to study the number of equivalent realisations for a generic real framework $(G,p)$ up to isometry.
Formally, we define
\[
r_d(G,p) = \left|\left\{ q \in (\mathbb{R}^d)^V \; : \; p,q \text{ equivalent } \right\}/ \sim \right| \, ,
\]
where $p \sim q$ if $p, q$ congruent.
As showcased in \Cref{fig:notgeneric}, this is no longer a generic property: $r_d(G,p)$ varies as we range over generic realisations $p \colon V\rightarrow \RR^d$.
As such, authors define the \emph{real $d$-realisation number} to be
\[
r_d(G) = \max \left\{r_d(G,p) \; : \; p\colon V \rightarrow \RR^d \text{ generic } \right\}\, .
\]
We make a point here that the combined results of Connelly \cite{conggr} and Gortler, Healy and Thurston \cite{gortler2010characterizing} imply that $r_d(G) = 1$ if and only if $r_d(G,p)=1$ for some generic $d$-dimensional framework $(G,p)$.

The value $r_d(G)$ is somewhat difficult to work with, as often there will exist generic realisations which do not have this number of equivalent frameworks.
To avoid this issue, we often instead approach this problem algebraically by counting complex realisations also.
To do so, we introduce the \emph{(complex) rigidity map} associated to $G$: 
\begin{align*}
	f_{G,d} : (\mathbb{C}^{d})^V \rightarrow \mathbb{C}^E, \quad p \mapsto \left( \frac{1}{2}\|p (v)-p(w)\|^2 \right)_{vw \in E}.
\end{align*}
We denote the Zariski closure of the image of $f_{G,d}$ by $\ell_d(G)$.
Since the domain of $f_{G,d}$ is irreducible, $\ell_d(G)$ is a variety.
With this, we observe that two $d$-dimensional frameworks $(G,p),(G,q)$ are equivalent if and only if $f_{G,d}(p)=f_{G,d}(q)$.
If the set of vertices of $(G,p)$ affinely span $\mathbb{C}^d$,
then two realisations $p,q$ are congruent if and only if $f_{K_V,d}(p) = f_{K_V,d}(q)$, where $K_V$ is the complete graph with vertex set $V$ (see \cite[Section 10]{Gortler2014} for more details).
We observe here that the Jacobian of $f_{G,d}$ at $p$ 
is exactly the rigidity matrix $R(G,p)$.

For all $p \in (\mathbb{C}^{d})^V$,
we define 
\begin{equation*}
	C_d(G,p) := f^{-1}_{G,d} (f_{G,d}(p))/\sim
\end{equation*}
to be the \emph{realisation space of $(G,p)$}.
The cardinality of $C_d(G,p)$ is constant over a Zariski open dense subset of $(\mathbb{C}^d)^V$ (e.g., \cite[Proposition 3.5]{dewar23}).
This leads to the first definition of the $d$-realisation number:

\begin{definition}\label{def:count}
	The \emph{(complex) $d$-realisation number} of a $d$-rigid graph $G=(V,E)$ is an element of $\mathbb{N} \cup \{\infty\}$ given by
	\begin{align*}
		c_d(G) :=		
		\begin{cases}
			|C_d(G,p)| \text{ for a general point $p \in (\mathbb{C}^d)^V$} &\text{if } |V| \geq d+1, \\
			 1 &\text{if } |V| \leq d \text{ and $G$ is complete}, \\
			 \infty &\text{if } |V| \leq d \text{ and $G$ is not complete}.
		\end{cases}
	\end{align*}
\end{definition}

While this definition is natural from an algebraic geometry perspective, it is not clear a priori that all generic realisations are contained in the Zariski open set on which $|C_d(G,p)|$ is constant.
    We would prefer that the definition relied only on the notion of a generic framework and does not require specifying the Zariski open set on which the realisation space has constant cardinality.
    The following theorem states that we can replace `general' with `generic realisation' in the definition of $c_d(G)$.

\begin{restatable}{theorem}{genericimpliesgeneral}\label{thm:generic-implies-general}
Let $G=(V,E)$ be a $d$-rigid graph.
Then $c_d(G) = |C_d(G,p)|$ for all quasi-generic realisations $p \in (\CC^d)^V$ of $G$.
\end{restatable}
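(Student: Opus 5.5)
The argument is a ``defined over $\QQ$'' argument: the bad locus where $|C_d(G,p)|$ differs from $c_d(G)$ is contained in a proper algebraic subset defined over $\QQ$, and a generic point avoids every such set. The cases $|V|\le d$ are immediate from the definition, since $G$ is then complete and the realisation space is a single class. So assume $|V|\ge d+1$.

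\textbf{Step 1: reduce to generic $p$.} Congruent realisations have the same edge lengths, and congruence is an equivalence relation. Hence if $p\sim p'$ then $C_d(G,p)=C_d(G,p')$ as sets of classes, and it suffices to treat generic $p$.

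\textbf{Step 2: replace the quotient by a finite fibre.} Fix $d$ vertices $v_1,\dots,v_d$ and consider the linear subspace $W\subset(\CC^d)^V$ of ``pinned'' realisations. Here $p(v_1)=0$, $p(v_i)$ lies in the span of $e_1,\dots,e_{i-1}$, and there are suitable sign/normalisation conditions. Working with $W$ avoids quotienting by the non-compact group of complex isometries, which is the delicate part. The conditions are:
\begin{itemize}
\item all are polynomial with rational coefficients;
\item a realisation whose first $d$ vertices are in general position (e.g.\ nonzero Gram determinants, avoiding isotropic directions) is congruent to one in $W$;
\item that pinned representative is unique up to a finite group of reflections of fixed order $2^{d-1}$ or so.
\end{itemize}
Concretely, we replace classes by the values $f_{K_V,d}(q)$. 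Then $|C_d(G,p)|$ equals the number of points in the image under $f_{K_V,d}$ of the fibre $f_{G,d}^{-1}(f_{G,d}(p))$, restricted to the affinely spanning realisations. Consider the composite
\[
\pi:\ \overline{f_{K_V,d}((\CC^d)^V)}\to \ell_d(G),
\]
the coordinate projection onto the edge entries. It is a morphism of varieties defined over $\QQ$. Since $G$ is $d$-rigid, \Cref{lem:dominant} (the dimension statement) shows that $\pi$ is dominant and generically finite: both varieties have dimension $d|V|-\binom{d+1}{2}$. Then $|C_d(G,p)|=|\pi^{-1}(\pi(x))|$, where $x=f_{K_V,d}(p)$, once we also discard non-spanning solutions. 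This needs care: for generic $p$, every equivalent $q$ is infinitesimally rigid and spanning. This holds because the set of $y\in\ell_d(G)$ over which some fibre point is degenerate is a constructible set defined over $\QQ$ of lower dimension, again by \Cref{lem:dominant} applied to the degenerate locus.

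\textbf{Step 3: the fibre count is constant off a $\QQ$-closed set.} For a generically finite dominant morphism $\pi:X\to Y$ of varieties over $\QQ$, there is a proper closed subset $Z\subsetneq Y$, defined over $\QQ$, such that $|\pi^{-1}(y)|=\deg_{\mathrm{sep}}\pi$ for all $y\notin Z$. This is standard: by generic freeness/étaleness, the branch locus and the locus where $\pi$ fails to be finite are closed and cut out by polynomials obtained by elimination over $\QQ$ from the defining equations. Hence $f_{G,d}^{-1}(Z)$ is a proper closed subset of $(\CC^d)^V$ defined over $\QQ$. A generic $p$ satisfies no nonzero rational polynomial, so $p\notin f_{G,d}^{-1}(Z)$, and $|C_d(G,p)|$ equals the general value $c_d(G)$.

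\textbf{Main obstacle.} The hardest step is ensuring that every exceptional set is genuinely defined over $\QQ$ (or $\overline{\QQ}$, which suffices since genericity also avoids sets defined over $\overline{\QQ}$ via their norm polynomials). Elimination and Zariski closure preserve the field of definition, so I would verify this explicitly through Gröbner/elimination reasoning. The second delicate point is the non-spanning solutions in Step 2.
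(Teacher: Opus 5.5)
Your proposal is correct in outline, but it follows a different route from the paper.

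\textbf{The paper's route.} The paper stays in the pinned space. \Cref{lem:quasixgd} moves $p$ into $X_{G,d}$. The transcendence-degree count of \Cref{lem:generic+can+pos} then shows that the pinned representative is a generic point of $X_{G,d}\cong\CC^{d|V|-\binom{d+1}{2}}$. Next, \Cref{lem:generic+fibre+cardinality} (a $\QQ$-polynomial map with injective Jacobian has the same fibre size at all generic points) gives constancy over quasi-generic realisations. This is matched to the Zariski-open definition by noting that the open set $U$ contains some generic point.

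\textbf{Your route.} You work on the distance side instead. The coordinate projection $\pi$ from the Cayley--Menger variety to $\ell_d(G)$ counts congruence classes directly, with no pinning and no factor $2^d$. You replace \Cref{lem:rigid+generic+implies+quasi} by an algebraic Sard argument: applying \Cref{lem:dominant} to each component of the non-infinitesimally-rigid locus shows that everything equivalent to a generic realisation is infinitesimally rigid. You replace \Cref{lem:generic+fibre+cardinality} by the fact that the locus over which $\pi$ fails to be finite \'etale is a proper closed set defined over $\QQ$.

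\textbf{Trade-offs.} Your approach yields the generic and general statements simultaneously. The complement of $f_{G,d}^{-1}$ of the union of these two $\QQ$-closed sets is a nonempty Zariski-open set containing every generic point, and on it $|C_d(G,p)|=\deg\pi$; hence $c_d(G)=\deg\pi$ by definition. The cost is that you rely on descent and elimination facts that you only sketch. The paper instead reduces everything to one concrete lemma and reuses machinery, such as quasi-genericity of equivalent realisations, that it needs elsewhere anyway.

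\textbf{Points to tighten when writing it up.}
\begin{enumerate}
\item Drop the pinned space $W$. It is never used, and for the paper's pinning the residual group has order $2^d$, not $2^{d-1}$.
\item Say why $\pi^{-1}(\pi(x))$ contains no points outside the image of $f_{K_V,d}$. Over $\CC$ that image is closed, because every complex symmetric matrix of rank at most $d$ is a Gram matrix $B^{\top}B$ with $B$ having $d$ rows.
\item Justify ``infinitesimally rigid $\Rightarrow$ affinely spanning'' for $|V|\ge d+1$. Compute the rigidity-matrix rank in coordinates on the affine span: if the span has dimension $k<d$, the rank is at most $k|V|-\binom{k+1}{2}<d|V|-\binom{d+1}{2}$.
\end{enumerate}
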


The case when $d=2$ was proved by Jackson and Owen \cite[Theorem 3.6]{JO19}.
Their techniques may be adapted to arbitrary dimension so we defer the proof to \Cref{sec:appendix}.

The main upshot of \Cref{thm:generic-implies-general} is that we now have a `general point' definition and a `generic framework' definition of $c_d(G)$ that coincide.
We will use both at different points throughout our analysis, as each one lends itself better to different toolkits.

In practice, $C_d(G,p)$ is often awkward to work with as quotienting by isometries obscures the algebraic variety structure.
We will instead restrict the domain of $f_{G,d}$ to `pinned frameworks', removing almost all isometries beforehand.
We do this as follows.
Let $G=(V,E)$ be a graph with at least $d+1$ vertices and fix a sequence of $d$ vertices $v_1,\ldots, v_d$.
We now define the linear space
\begin{align}\label{eq:xset}
	X_{G,d} := \left\{ p \in (\mathbb{C}^{d})^V : p_j(v_k) =0 \text{ for all } 1\leq k \leq j \leq d \right\}\,.
\end{align}
Intuitively, this is the space of frameworks where $v_1$ is pinned to the origin, $v_2$ is pinned to the $x$-axis, $v_3$ is pinned to the $xy$-plane, etc.
We further note that $X_{G,d}$ has dimension $d|V| - \binom{d+1}{2}$.

Almost every framework is congruent to a framework in $X_{G,d}$; in particular, every quasi-generic framework is congruent to a framework in $X_{G,d}$. (The precise condition is given in \Cref{lem:canonical+position}.)
We note that the frameworks in $X_{G,d}$ are never generic as they have been pinned in special position, but $X_{G,d}$ does contain a dense subset of quasi-generic frameworks.

With this, we define the \emph{pinned rigidity map}
\begin{align*}
	\tilde{f}_{G,d} : X_{G,d} \rightarrow \ell_d(G), \quad p \mapsto f_{G,d}(p),
\end{align*}
i.\,e., the restriction of $f_{G,d}$ to the domain $X_{G,d}$ and the codomain $\ell_d(G)$.
As can be seen by the following lemma, the map $\tilde{f}_{G,d}$ allows us to more easily define the cardinality of the set $C_d(G,p)$ for most realisations $p$.

\begin{lemma}[{\cite[Lemma 3.2]{dewar23}}]\label{l:xgd}
	Let $G=(V,E)$ be a graph with $|V| \geq d+1$. 
	The image of $\tilde{f}_{G,d}$ is Zariski dense in the image of $f_{G,d}$, and so $\tilde{f}_{G,d}$ is dominant.
	Moreover, there exists a Zariski open subset $U \subset (\mathbb{C}^d)^V$ 
    such that
	\begin{align*}
		\left|\tilde{f}_{G,d}^{-1}\left(f_{G,d}(p) \right) \right| = 2^{d} |C_d(G,p)|
	\end{align*}
	for all $p \in U$. 
\end{lemma}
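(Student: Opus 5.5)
The statement to prove is \Cref{l:xgd}: $\tilde f_{G,d}$ is dominant, and for general $p$ the pinned fibre has exactly $2^d|C_d(G,p)|$ points. The plan is to compare the pinned fibre with the full fibre through the action of the isometry group. Write $\mathrm{Isom}(d)$ for the group of maps $x\mapsto Ax+b$ with $A\in O(d,\CC)$, acting diagonally on $(\CC^d)^V$. The rigidity map $f_{G,d}$ is invariant under this action.

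\textbf{Step 1: density of the pinned image.} Let $W\subset(\CC^d)^V$ be the Zariski open set of realisations for which the points $p(v_1),\dots,p(v_d)$, together with the flag of spans they generate, are in ``non-isotropic'' position. Concretely, we require the Gram determinants of $p(v_2)-p(v_1),\dots,p(v_k)-p(v_1)$ to be nonzero for each $k\le d$. A complex Gram--Schmidt argument, which works precisely because these Gram determinants are nonzero, produces $A\in O(d,\CC)$ and $b$ with $Ap+b\in X_{G,d}$. This is the content of \Cref{lem:canonical+position}. It follows that $f_{G,d}(W)\subseteq \tilde f_{G,d}(X_{G,d})$. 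Since $W$ is dense, $f_{G,d}(W)$ is dense in the image of $f_{G,d}$. Hence $\tilde f_{G,d}$ has dense image in $\ell_d(G)$ and is dominant.

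\textbf{Step 2: counting points in the pinned fibre.} Next I count the isometries that keep a pinned framework pinned. Take $q\in X_{G,d}\cap W$ and suppose $g\cdot q\in X_{G,d}$ as well.
\begin{itemize}
\item Since $v_1$ is sent to the origin, we get $b=0$.
\item By induction on $k$, the vector $Aq(v_k)$ must lie in $\mathrm{span}(e_1,\dots,e_{k-1})$ with its coordinates on $e_1,\dots,e_{k-2}$ already forced. Orthogonality (again using the nondegenerate Gram condition) then leaves only a sign choice, $\pm$, on $e_{k-1}$.
\item After the $d$-th step, one last sign remains for $e_d$, since $A$ must preserve the orthogonal complement.
\end{itemize}
This gives exactly $2^d$ isometries, the diagonal sign matrices. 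For each such $A$, the framework $Aq$ is distinct from $q$ provided $q$ is not degenerate. It suffices that each point $q(v_{k+1})$, and some further vertex beyond $v_d$ (available since $|V|\ge d+1$), have nonzero $k$-th coordinate. This is again an open condition.

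\textbf{Step 3: bijection with realisation classes.} For $p$ in an open set $U$, every $q$ in the full fibre $f^{-1}_{G,d}(f_{G,d}(p))$ must lie in $W$ and satisfy the non-degeneracy conditions. Then each congruence class in $C_d(G,p)$ meets $X_{G,d}$ in exactly $2^d$ points, which gives
\[
|\tilde f^{-1}_{G,d}(f_{G,d}(p))| = 2^d|C_d(G,p)|.
\]

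\textbf{Main obstacle.} The hardest part is making $U$ open, because we need the conditions to hold for all points in the fibre, not just for $p$ itself. My approach is as follows. Let $B\subset X_{G,d}$ be the proper closed set where the conditions fail. Consider the closure of $\tilde f_{G,d}(B)$:
\begin{itemize}
\item If $\dim \tilde f_{G,d}(B)<\dim\ell_d(G)$, the closure is a proper closed subset. Removing its preimage under $f_{G,d}$ works.
\item Otherwise, restrict to the components of $B$ that are not dominant and handle them the same way. For the dominant components, use that $B$ is $\mathrm{Isom}$-stable in the unpinned space and has smaller dimension. By the fibre-dimension part of \Cref{lem:dominant}, general fibres of the unpinned map then carry only finitely many points of $B$ up to congruence. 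This can be refined by also requiring the bad points to be sent to a lower-dimensional image via the constructible set $f_{G,d}(\overline{\mathrm{Isom}\cdot B})$.
\end{itemize}
Since $f_{G,d}$ depends only on congruence classes, and the set of frameworks failing non-isotropy is a proper Isom-invariant closed set of realisations with affinely degenerate or isotropic configurations, its image has dimension strictly less than $\dim\ell_d(G)$ when $G$ is rigid. It is this dimension drop that I would verify carefully. In the non-rigid case, the finite count is vacuous anyway.
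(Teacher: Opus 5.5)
Your Steps 1--3 are the right skeleton; note that the paper itself only imports this lemma from \cite{dewar23}. The pieces you have right are: density via \Cref{lem:canonical+position}, the $2^d$ diagonal sign matrices as the only isometries carrying a non-degenerate point of $X_{G,d}$ back into $X_{G,d}$ (acting freely on affinely spanning configurations), and the partition of the fibre into congruence classes. The genuine gap is the step that carries all the content: producing a non-empty Zariski open $U$ such that, for $p\in U$, \emph{no} point of $f_{G,d}^{-1}(f_{G,d}(p))$ is bad. You defer the required dimension drop (``I would verify carefully''), and the sketch you give would not work.

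First, taking $B\subset X_{G,d}$ misses exactly the configurations that spoil the count. A $q$ with $q(v_2)\neq q(v_1)$ but $\|q(v_2)-q(v_1)\|^2=0$ is congruent to no point of $X_{G,d}$, so its class contributes $1$ to $|C_d(G,p)|$ and $0$ to the pinned fibre. The bad set must instead be the $\mathrm{Isom}$-invariant closed set $B\subset(\CC^d)^V$ where some leading principal minor of $\mathbb{G}$ vanishes or affine spanning fails, and you must remove $f_{G,d}^{-1}\bigl(\overline{f_{G,d}(B)}\bigr)$. Second, in your ``dominant component'' case, knowing that a general fibre contains only finitely many bad classes is useless, since a single bad class already breaks the equality. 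What has to be shown is that no component of $B$ dominates $\ell_d(G)$.

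For $d$-rigid $G$, where $\dim\ell_d(G)=d|V|-\binom{d+1}{2}$, this can be done as follows. Each irreducible component $B'$ of $B$ is stable under the connected group $SO(d,\CC)\ltimes\CC^d$, which acts with trivial stabilisers on affinely spanning configurations. So if the general member of $B'$ spans, general fibres of $f_{G,d}|_{B'}$ contain $\binom{d+1}{2}$-dimensional orbits, and \Cref{lem:dominant} gives $\dim\overline{f_{G,d}(B')}\le\dim B'-\binom{d+1}{2}\le d|V|-1-\binom{d+1}{2}$.

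On the non-spanning locus stabilisers can be positive-dimensional, so the orbit count alone is not enough. Argue directly instead: up to isometry such a configuration lies in $\{x_d=0\}$ or in $n^\perp$ for a fixed isotropic $n$. Then $f_{G,d}$ factors through $\ell_{d-1}(G)$, respectively $\ell_{d-2}(G)$ (project along the radical $\mathrm{span}(n)$). These have dimensions at most $(d-1)|V|-\binom{d}{2}$ and $(d-2)|V|-\binom{d-1}{2}$, both strictly less than $d|V|-\binom{d+1}{2}$ because $|V|\ge d+1$.

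Finally, the non-rigid case is not vacuous, though it is easy. For $p$ in a non-empty open set, the pinned fibre is positive-dimensional by the fibre-dimension part of \Cref{lem:dominant} applied to the dominant map $\tilde f_{G,d}$. Also $f_{G,d}^{-1}(f_{G,d}(p))$ has dimension exceeding $\binom{d+1}{2}$, so it is not a finite union of orbits. Hence both sides are infinite.
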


The derivative of the pinned rigidity map can be related to infinitesimal rigidity if our pinned vertices are affinely independent:

\begin{lemma}\label{lem:pinnedrigmatrix}
Choose $p \in X_{G,d}$ such that the points $p(v_1), \dots, p(v_d)$ are affinely independent.
Then $d\tilde{f}_{G,d}(p)$ has rank $d|V| - \binom{d+1}{2}$ if and only if $(G,p)$ is infinitesimally rigid.
\end{lemma}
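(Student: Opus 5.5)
The plan is to compare the pinned rigidity map with the full rigidity map through the tangent space of $X_{G,d}$ at $p$. Since $X_{G,d}$ is a linear subspace of $(\CC^d)^V$, its tangent space at every point is $X_{G,d}$ itself, namely the space of vectors $u$ with $u_j(v_k)=0$ for $1\le k\le j\le d$. Because $d\tilde f_{G,d}(p)$ is the restriction of $R(G,p)$ to $X_{G,d}$, we have $\rank d\tilde f_{G,d}(p) = \dim X_{G,d} - \dim(\ker R(G,p)\cap X_{G,d})$. As $\dim X_{G,d} = d|V|-\binom{d+1}{2}$, the rank is maximal exactly when $\ker R(G,p)\cap X_{G,d}=\{0\}$. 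It therefore suffices to show that this intersection is trivial if and only if $(G,p)$ is infinitesimally rigid.

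The key step is to show that the space $T(p)$ of trivial infinitesimal motions, i.e. the maps $u(v)=Sp(v)+b$ with $S$ skew-symmetric and $b\in\CC^d$, meets $X_{G,d}$ only in $0$, and that $\dim T(p)=\binom{d+1}{2}$. I will take $p(v_1),\dots,p(v_d)$ affinely independent as given. Then $T(p)$ has full dimension, because the linear map $(S,b)\mapsto u$ is injective: if $u=0$ then $Sp(v_1)+b=0$, and since $p(v_1)=0$ this gives $b=0$. The vectors $p(v_2),\dots,p(v_d)$ are linearly independent, and they lie in the span of $e_1,\dots,e_{d-1}$ by the pinning. So $S$ vanishes on that span, and skew-symmetry then forces $S=0$. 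Here I also use the fact that $S$ is determined on the remaining direction by skew-symmetry: $Se_d$ is orthogonal, in the bilinear sense, to $e_1,\dots,e_{d-1}$, and $e_d^\top S e_d=0$. For $T(p)\cap X_{G,d}=\{0\}$ I argue inductively. The condition $u(v_1)=b=0$ holds. Then $u(v_k)=Sp(v_k)$ must have vanishing coordinates $j\ge k$. Because the matrix of $p(v_2),\dots,p(v_d)$ in the first $d-1$ coordinates is upper triangular and nonsingular, this forces the entries of $S$ to vanish column by column. Alternatively, a dimension count works: $\dim T(p)+\dim X_{G,d}=d|V|$, and the pinning conditions are exactly a complement to $T(p)$, which the triangular structure verifies.

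With this in hand, both directions follow. If $(G,p)$ is infinitesimally rigid, then $\ker R(G,p)$ has dimension $\binom{d+1}{2}$ and contains $T(p)$, so it equals $T(p)$. It therefore meets $X_{G,d}$ trivially, and the rank is full. The simplex case is handled by noting $|V|\ge d+1$, with the same reasoning applied to the kernel. Conversely, if $\ker R(G,p)\cap X_{G,d}=0$, then $\dim\ker R(G,p)\le d|V|-\dim X_{G,d}=\binom{d+1}{2}$, and since $T(p)\subseteq\ker R(G,p)$ equality holds. Thus $\rank R(G,p)=d|V|-\binom{d+1}{2}$.

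The main obstacle is the triangular verification that $T(p)\cap X_{G,d}=0$ over $\CC$. The difficulty is that the quadratic form is not positive definite, so I must avoid arguments that use orthogonality. The proof must rely purely on the coordinate structure of the pinned points and on the linear independence of $p(v_2),\dots,p(v_d)$, which is what the affine independence hypothesis provides.
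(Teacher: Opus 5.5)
Your proposal is correct and follows essentially the paper's route: you show the trivial motions $\mathcal{T}(p)$ have dimension $\binom{d+1}{2}$ using that $p(v_2),\dots,p(v_d)$ span $\CC^{d-1}\times\{0\}$ together with skew-symmetry, and then show $\mathcal{T}(p)\cap X_{G,d}=\{0\}$ column by column via the triangular pinning, exactly as in the paper's two claims. The only small difference is in the direction from full pinned rank to infinitesimal rigidity: your count $\dim\ker R(G,p)\le\binom{d+1}{2}$, combined with $\mathcal{T}(p)\subseteq\ker R(G,p)$, gives the exact rank explicitly, whereas the paper's column-deletion remark tacitly relies on the upper bound $\rank R(G,p)\le d|V|-\binom{d+1}{2}$, which your computation of $\dim\mathcal{T}(p)$ supplies.
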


\begin{proof}
    Observe that the matrix representing $d\tilde{f}_{G,d}(p)$ can be obtained from $R(G,p)$ by deleting the columns labelled by $(j,v_k)$ for all $j \geq k$.
    If $d\tilde{f}_{G,d}(p)$ has rank $d|V| - \binom{d + 1}{2}$, it necessarily implies that $R(G,p)$ is also full rank and so $(G,p)$ is infinitesimally rigid.

    Now suppose $(G,p)$ is infinitesimally rigid.
    Choose any $d \times d$ complex matrix $A$ which is skew-symmetric,
    i.e., $A^{\top}=-A$.
    Then we observe that for any $x \in \mathbb{C}^d$ we have
    \begin{equation*}
        x^{\top} A x = (x^{\top} A x)^{\top} = x^{\top} A^{\top} x = - x^{\top} A x,
    \end{equation*}
    and so $x^{\top} A x = 0$.
    This implies that the set
    \begin{equation*}
        \mathcal{T}(p) := \left\{ (A p(v) + b)_{v \in V} : A \text{ is $d \times d$ skew-symmetric and } b \in \mathbb{C}^d \right\}
    \end{equation*}
    is contained in $\ker R(G,p)$.

    \begin{claim}
        $\ker R(G,p)= \mathcal{T}(p)$.
    \end{claim}
    
    \begin{claimproof}
        As $(G,p)$ is infinitesimally rigid, it suffices to show $\dim \mathcal{T}(p)= \binom{d+1}{2}$.
        In fact, since $p(v_1) = \mathbf{0}$, we only need to show that the restriction of $\mathcal{T}(p)$ to vectors where $b = \mathbf{0}$ has dimension $\binom{d}{2}$.
        
        Suppose for contradiction that this is not the case.
        Then there must exist some non-zero skew-symmetric matrix $A$ where $A p(v_i)= \mathbf{0}$ for each $i \in [d]$.
        Since points $p(v_2), \dots, p(v_d)$ are a linear basis of $\mathbb{C}^{d-1} \times \{0\}$,
        the left $d-1$ columns of $A$ only contain zero entries.
        However, $A$ being skew-symmetric then implies that $A$ is the all-zeroes matrix, a contradiction.
    \end{claimproof}
    
    Now choose any vector $u = (A p(v) + b)_{v \in V} \in \ker R(G,p) \cap X_{G,d}$.
    We immediately see that $b = \mathbf{0}$ as $p(v_1)= A p(v_1) + b= \mathbf{0}$.
    Since $(A p(v))_{v \in V}$ also satisfies the pinning conditions described by $X_{G,d}$,
    we have that for each $1 \leq i \leq j \leq d$, the $j$-th coordinate of $Ap(v_i)$ is 0.

    \begin{claim}
        $A$ is the all-zeroes matrix.
    \end{claim}

    \begin{claimproof}
        Fix $A_1,\ldots,A_d$ to be the columns of $A$.
        We now prove $A$ is the all-zeroes matrix by induction on its columns.
        
        First, take the vector $Ap(v_2)$.
        Since $\mathbf{0} = p(v_1),p(v_2)$ are affinely independent,
        we have that the first coordinate of $p(v_2)$, here denoted $\lambda$, is non-zero.
        With this, we have that $Ap(v_2) = \lambda A_1$, and thus $A_1 = \lambda^{-1} Ap(v_2)$.
        As the diagonal of $A$ only takes zero values,
        the first coordinate of $A_1$ is 0.
        As $j$-th coordinate of $Ap(v_2)$ is 0 for each $2 \leq j \leq d$,
        we have that all other coordinates of $A_1$ are 0.
        Thus $A_1 = \mathbf{0}$.
        
        Now suppose that $A_1 = \cdots = A_{i-1} = \mathbf{0}$.
        Since the vectors $p(v_1), \ldots,p(v_{i+1})$ are affinely independent with $p(v_1), \ldots,p(v_i) \in \mathbb{C}^{i-1} \times \{0\}^{d-i+1}$,
        we have that the $i$-th coordinate of $p(v_{i+1})$, here denoted $\psi$, is non-zero.
        By our assumption, we now have that $Ap(v_{i+1}) = \psi A_i$, and thus $A_i = \psi^{-1} Ap(v_{i+1})$.
        Since the first $i-1$ rows of $A$ have zero entries,
        the first $i-1$ coordinates of $A_i$ are 0.
        As the diagonal of $A$ only takes zero values,
        the $i$-th coordinate of $A_i$ is 0.
        As $j$-th coordinate of $Ap(v_i)$ is 0 for each $i+1 \leq j \leq d$,
        we have that all other coordinates of $A_i$ are 0.
        Thus $A_i = \mathbf{0}$.
        The claim now follows by induction.
    \end{claimproof}
    
    The result now follows as the kernel of $d\tilde{f}_{G,d}(p)$ is exactly the linear space $\ker df_{G,d}(p) \cap X_{G,d}$.
\end{proof}

Finally, we note a useful (but non-trivial) fact that any realisation equivalent to a generic realisation in $\CC^d$ is also quasi-generic.
This was shown in \cite{JO19} in the case where $d=2$;
the general case proved similarly, but we include a proof for completeness in \Cref{sec:appendix}.

\begin{restatable}{lemma}{genericimpliesquasi}\label{lem:rigid+generic+implies+quasi}
    Let $G$ be a $d$-rigid graph with at least $d+1$ vertices, let $(G,p)$ be generic in $\mathbb{C}^d$ and let $(G,q)$ be equivalent to $(G,p)$.
    Then $(G,q)$ is quasi-generic.
\end{restatable}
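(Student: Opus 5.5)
The plan is to show that the coordinates of $q$, after a suitable isometry, are algebraic over $\QQ(p)$, and that they have transcendence degree $d|V|$ over $\QQ$. First, recall that $(G,p)$ is generic and $G$ is $d$-rigid with $|V|\ge d+1$, so $(G,p)$ is infinitesimally rigid. By \Cref{lem:canonical+position}, $p$ is congruent to some $p' \in X_{G,d}$ in which $p'(v_1),\dots,p'(v_d)$ are affinely independent. The first step is to move $q$ into $X_{G,d}$ as well. This needs the points $q(v_1),\dots,q(v_d)$ to be in general enough position for the pinning, i.e.\ nondegenerate Gram-type minors. Here I would use that the edge lengths $f_{G,d}(p)$ are generic points of $\ell_d(G)$, and that $\ell_d(G)$ has dimension $d|V|-\binom{d+1}{2}$. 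The locus of $q$ whose pinning fails is a proper algebraic subset $Z$ defined over $\QQ$. By \Cref{lem:dominant}, the closure of its image under $f_{G,d}$ would be a subvariety. Then $f_{G,d}(p)$ would lie in a proper $\QQ$-closed subset, unless $f_{G,d}(Z)$ is dense in $\ell_d(G)$.

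This density issue is the main obstacle: degenerate frameworks, such as those with vertices on isotropic lines, might a priori still hit generic lengths. I would handle it through a dimension count. Let $Z'$ be the degenerate set for which $q$ cannot be pinned. The set $\overline{f_{G,d}(Z')}$ is defined over $\QQ$ (or over $\overline{\QQ}$, which suffices), since $Z'$ is. Since $f_{G,d}(p)$ is a generic point of $\ell_d(G)$, it lies in $f_{G,d}(Z')$ only if $f_{G,d}(Z')$ is dense. In that case I would show the fibres over generic lengths restricted to $Z'$ have dimension exceeding the $\binom{d+1}{2}$-dimensional isometry orbits. Alternatively, when all $q(v_i)$ lie in a degenerate affine subspace, I would show they contain a positive-dimensional family of non-congruent equivalent frameworks. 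That would contradict the finiteness of generic fibres of $\tilde f_{G,d}$, which follows from \Cref{lem:pinnedrigmatrix} and \Cref{lem:dominant} (fibre dimension $0$).

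With $q' \in X_{G,d}$ congruent to $q$ and $\tilde f_{G,d}(q') = \tilde f_{G,d}(p')$, the rest is transcendence counting. Write $\mathbb{K}=\QQ(f_{G,d}(p))$. The fibre $\tilde f_{G,d}^{-1}(f_{G,d}(p))$ is a zero-dimensional algebraic set defined over $\mathbb{K}$, because generic fibres are finite. Hence every coordinate of $q'$ is algebraic over $\mathbb{K}$. Since $\tilde f_{G,d}$ is dominant onto $\ell_d(G)$ with $\dim \ell_d(G)=\dim X_{G,d}$, we have $\operatorname{trdeg}_\QQ \mathbb{K} = d|V|-\binom{d+1}{2}$. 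Therefore $q'$ has $d|V|-\binom{d+1}{2}$ algebraically independent nonzero coordinates, and these are exactly its free coordinates in $X_{G,d}$.

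Finally, I would produce a generic framework congruent to $q'$. Take an isometry $x\mapsto Ax+b$ with $b$ and the $\binom{d}{2}$ parameters of $A$ (say via the Cayley parametrisation) chosen algebraically independent over $\QQ(q')$. The image then has coordinates generating a field of transcendence degree $d|V|$. Since there are exactly $d|V|$ coordinates, they form an algebraically independent set. So $q$ is congruent to a generic realisation, i.e.\ $(G,q)$ is quasi-generic.
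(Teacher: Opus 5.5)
Your overall route matches the paper's. You pin $q$ via \Cref{lem:canonical+position}, show the pinned framework has transcendence degree $d|V|-\binom{d+1}{2}$, then apply an isometry with algebraically independent parameters. Steps two and three are fine. The gap is in step one, which you rightly flag as the crux, and the argument you sketch there would fail. If a component of $Z'$ dominated $\ell_d(G)$, its general fibres would have dimension at most $\dim Z'-\dim\ell_d(G)\le\binom{d+1}{2}-1$. That is \emph{smaller} than an isometry orbit, not larger. Moreover, failure of pinning is a condition only on the fixed vertices $v_1,\dots,v_d$ (e.g.\ $\|q(v_1)-q(v_2)\|^2=0$ for a non-edge $v_1v_2$). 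It does not by itself give $q$ any flexibility, so there is no reason to expect a positive-dimensional family of non-congruent equivalent frameworks. Even if such a family existed, those frameworks lie outside $X_{G,d}$, so they would not contradict finiteness of the fibres of $\tilde f_{G,d}$.

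The missing idea is to use rigidity through the \emph{complete} distance vector. Since $G$ is $d$-rigid, $\dim\ell_d(K_V)=\dim\ell_d(G)=D:=d|V|-\binom{d+1}{2}$. The field $\QQ(f_{K_V,d}(q))$ contains $\QQ(f_{G,d}(q))=\QQ(f_{G,d}(p))$, which has transcendence degree $D$. On the other hand, $f_{K_V,d}(q)\in\ell_d(K_V)$ bounds its transcendence degree by $D$. Hence $f_{K_V,d}(q)$ lies on no proper subvariety of $\ell_d(K_V)$ defined over $\overline{\QQ}$. Each leading principal minor of $\mathbb{G}(q)$ is a rational polynomial in $f_{K_V,d}(q)$, via $2x\cdot y=\|x\|^2+\|y\|^2-\|x-y\|^2$. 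It is not identically zero on $\ell_d(K_V)$, since it is nonzero at generic frameworks. So it is nonzero at $q$, and \Cref{lem:canonical+position} applies. In your terms: $f_{K_V,d}(Z')$ lies in a proper hypersurface section of the irreducible variety $\ell_d(K_V)$, of dimension at most $D-1$. Its coordinate projection $f_{G,d}(Z')$ therefore cannot be dense in $\ell_d(G)$.

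Incidentally, step two does not need finiteness of the fibre. You have $\operatorname{trdeg}_{\QQ}\QQ(q')\ge\operatorname{trdeg}_{\QQ}\QQ(f_{G,d}(q'))=D$, and $q'$ has only $D$ free coordinates.
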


\section{Dividing the realisation number}\label{sec:divide}

In this section we obtain two different division rules for rigid subgraphs of rigid graphs.

\subsection{Spanning rigid subgraphs}

We begin this section by proving that the realisation number of any spanning $d$-rigid subgraph is divisible by the realisation number of the larger $d$-rigid graph. 

\gcd*

\begin{proof}
    Choose a generic realisation $p \in (\CC^{d})^V$ for $G$ with $d$-rigid subgraph $H$.
    Let $[p^1], \ldots, [p^k]$ denote the equivalence classes of $C_d(H,p)$. Then $c_d(H)=k$.
    Define an equivalence relation $\sim_{G}$ on $f^{-1}_{H,d}(f_{H,d}(p))$ by putting $p' \sim_G p''$ if and only if $f_{G,d}(p') = f_{G,d}(p'')$.
    Now note that if $p',p'' \in [p^i]$,
    then $p' \sim_G p''$.
    Hence, $\sim_G$ gives an equivalence relation on the elements of $C_d(H,p)$.
    Let $[[p^{k_1}]], \ldots, [[p^{k_\ell}]]$ be the equivalence classes of $\sim_G$ on $C_d(H,p)$.
    Pick any $s \in [\ell]$,
    and label the elements of $[[p^{k_s}]]$ by $[q^1], \ldots, [q^t]$.
    If we choose some $q \in [q^j]$ for any $j \in [t]$,
    then $C_d(G,q) = \{ [q^1], \ldots, [q^t] \}$.
    Since $f_{H,d}(p) = f_{H,d}(q)$ and $p$ is generic,
    the realisation $q$ is congruent to a generic realisation by \Cref{lem:rigid+generic+implies+quasi}.
    Hence,
    \begin{equation*}
        c_d(G) = |C_d(G,p)| = |C_d(G,q)| = t.
    \end{equation*}
    Since this holds for each of the equivalence classes $[[p^{k_1}]], \ldots, [[p^{k_\ell}]]$,
    it follows that $c_d(H) = k = \ell t = \ell c_d(G)$.
\end{proof}

It is easy to see that some realisation numbers are not the common divisor of their spanning minimally $d$-rigid subgraphs:
the graph $K_4$ is globally 2-rigid (and hence $c_2(G) = 1$), but the only minimally 2-rigid spanning subgraph of $G$ is $K_4-e$ which has 2-realisation number 2.

\begin{example}
Consider the graphs $G_1,G_2,G_3$ in \Cref{fig:prismadapted}. The graph $G_1$ is globally rigid in the plane (see, for example, \cite[Theorem~6.1]{bj03}). We show this fact follows from \Cref{thm:gcd}.
Note that $G_2$ and $G_3$ are obtained from $G_1$ by deleting single edges.
Their 2-realisation numbers are $c_2(G_2)=45$ and $c_2(G_3)=32$: the former is computed in \cite[Section 8]{JO19} and both can be found in \cite{capcodata}.
Hence the greatest common divisor is $1$ and \Cref{thm:gcd} implies that $c_2(G_1)=1$ i.e. $G_1$ is globally rigid.
\end{example}

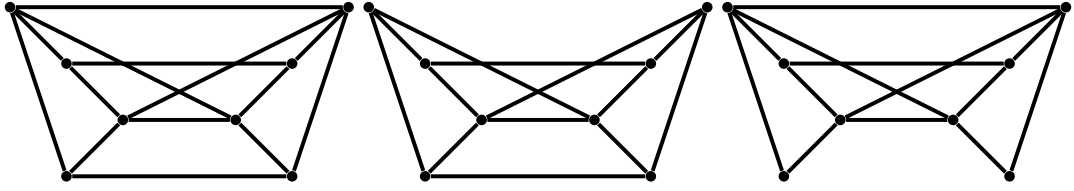
\begin{figure}[tp]
	\begin{center}
        \begin{tikzpicture}[scale=0.75]
			\node[vertex] (11) at (-1,0) {};
			\node[vertex] (21) at (-2,1) {};
			\node[vertex] (31) at (-2,-1) {};
			
			\node[vertex] (12) at (1,0) {};
			\node[vertex] (22) at (2,1) {};
			\node[vertex] (32) at (2,-1) {};

			\node[vertex] (a) at (-3,2) {};
                \node[vertex] (b) at (3,2) {};
			
			\draw[edge] (11)edge(12);
			\draw[edge] (21)edge(22);
			\draw[edge] (31)edge(32);
			
			\draw[edge] (11)edge(21);
			\draw[edge] (31)edge(11);
			
			\draw[edge] (12)edge(22);
			\draw[edge] (32)edge(12);

			\draw[edge] (a)edge(21);
			\draw[edge] (a)edge(12);
			\draw[edge] (a)edge(31);

                \draw[edge] (b)edge(22);
			\draw[edge] (b)edge(11);
			\draw[edge] (b)edge(32);

                \draw[edge] (a)edge(b);
		\end{tikzpicture}
            \begin{tikzpicture}[scale=0.75]
			\node[vertex] (11) at (-1,0) {};
			\node[vertex] (21) at (-2,1) {};
			\node[vertex] (31) at (-2,-1) {};
			
			\node[vertex] (12) at (1,0) {};
			\node[vertex] (22) at (2,1) {};
			\node[vertex] (32) at (2,-1) {};

			\node[vertex] (a) at (-3,2) {};
                \node[vertex] (b) at (3,2) {};
			
			\draw[edge] (11)edge(12);
			\draw[edge] (21)edge(22);
			\draw[edge] (31)edge(32);
			
			\draw[edge] (11)edge(21);
			\draw[edge] (31)edge(11);
			
			\draw[edge] (12)edge(22);
			\draw[edge] (32)edge(12);

			\draw[edge] (a)edge(21);
			\draw[edge] (a)edge(12);
			\draw[edge] (a)edge(31);

                \draw[edge] (b)edge(22);
			\draw[edge] (b)edge(11);
			\draw[edge] (b)edge(32);

		\end{tikzpicture}
            \begin{tikzpicture}[scale=0.75]
			\node[vertex] (11) at (-1,0) {};
			\node[vertex] (21) at (-2,1) {};
			\node[vertex] (31) at (-2,-1) {};
			
			\node[vertex] (12) at (1,0) {};
			\node[vertex] (22) at (2,1) {};
			\node[vertex] (32) at (2,-1) {};

			\node[vertex] (a) at (-3,2) {};
                \node[vertex] (b) at (3,2) {};
			
			\draw[edge] (11)edge(12);
			\draw[edge] (21)edge(22);
			
			\draw[edge] (11)edge(21);
			\draw[edge] (31)edge(11);
			
			\draw[edge] (12)edge(22);
			\draw[edge] (32)edge(12);

			\draw[edge] (a)edge(21);
			\draw[edge] (a)edge(12);
			\draw[edge] (a)edge(31);

                \draw[edge] (b)edge(22);
			\draw[edge] (b)edge(11);
			\draw[edge] (b)edge(32);

                \draw[edge] (a)edge(b);
		\end{tikzpicture}
	\end{center}
	\caption{Three 2-rigid graphs: (left) $G_1$ with $c_2(G_1) = 1$; (middle) $G_2$ with $c_2(G_2) = 45$; (right) $G_3$ with $c_2(G_3) = 32$.}\label{fig:prismadapted}
\end{figure}

\begin{corollary}\label{cor:globallylinked}
    Let $G$ be a $d$-rigid graph.
    Suppose there exists an edge $ij \in E$ and realisations $p,q \in (\CC^{d})^V$ such that $p$ is generic, $f_{G,d}(p) = f_{G,d}(q)$ but $\|p_i - p_j\|^2 \neq \|q_i - q_j\|^2$.
    Then $c_d(G+ij) \leq \frac{1}{2}c_d(G)$,
    and this bound is tight.
\end{corollary}

\begin{proof}
    The hypotheses on $p$ and $q$ imply that $c_d(G+ij) < c_d(G)$.
    Hence, $c_d(G+ij) \leq \frac{1}{2}c_d(G)$ by \Cref{thm:gcd}.
    To see that the result is tight,
    choose $G$ such that $c_d(G) = 2$ (for example,
    the union of two complete graphs with more than $d$ vertices that share exactly $d$ vertices).
\end{proof}

Repeated uses of \Cref{cor:globallylinked} leads to the following result.

\begin{corollary}\label{cor:lowerbound}
    Let $G$ be a $d$-rigid graph with a sequence of non-edges $\{v_1 w_1\},\ldots,\{v_k w_k\}$.
    Let $G_0, \ldots , G_k$ be a sequence of graphs where $G_0 = G$ and $G_{i} = G_{i-1} + v_{i}w_{i}$ for each $i \in [k]$.
    If $G_k$ is globally $d$-rigid and $c_d(G_{i}) < c_d(G_{i-1})$ for each $i \in [k]$,
    then $c_d(G) \geq 2^k$.
\end{corollary}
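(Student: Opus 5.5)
The plan is to run an induction along the chain $G_0, G_1, \ldots, G_k$, showing that each edge addition at least halves the realisation number. Two facts are needed along the way. First, every $G_i$ is $d$-rigid, because it contains the $d$-rigid spanning subgraph $G$; this makes $c_d(G_i)$ well defined and lets us apply \Cref{thm:gcd} to each pair $G_{i-1} \subseteq G_i$. Second, if $G$ has at most $d$ vertices, then rigidity forces $G$ to be complete, so it has no non-edges. In that case $k=0$ and the bound $c_d(G) \geq 1$ holds trivially. So we may assume $|V| \geq d+1$, which is exactly the hypothesis \Cref{thm:gcd} requires.

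The key step is the following. For each $i \in [k]$, $G_{i-1}$ is a spanning $d$-rigid subgraph of the $d$-rigid graph $G_i$. \Cref{thm:gcd} then gives $c_d(G_i) \mid c_d(G_{i-1})$, so $c_d(G_{i-1}) = m_i\, c_d(G_i)$ for some positive integer $m_i$. The strict inequality $c_d(G_i) < c_d(G_{i-1})$ forces $m_i \geq 2$. This gives $c_d(G_{i-1}) \geq 2\, c_d(G_i)$.

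We need these values to be finite, positive integers. Positivity is clear, since the class of $p$ itself always lies in $C_d(\cdot,p)$. Finiteness follows because $G_i$ is rigid, so generic fibres of the pinned map are finite by \Cref{lem:dominant} and \Cref{l:xgd}, the fibre dimension being $\dim X_{G,d} - \dim \ell_d(G) = 0$. Alternatively, one can simply note that the paper implicitly treats realisation numbers of rigid graphs on at least $d+1$ vertices as finite.

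Chaining these inequalities gives
\[
c_d(G) = c_d(G_0) \geq 2\,c_d(G_1) \geq \cdots \geq 2^k c_d(G_k).
\]
Finally, $G_k$ is globally $d$-rigid, so a generic framework has a unique equivalent realisation up to congruence in $\CC^d$ (Gortler--Thurston), i.e. $c_d(G_k)=1$. Hence $c_d(G) \geq 2^k$.

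There is no real obstacle here; the corollary is essentially the iterated form of \Cref{cor:globallylinked}. The only point needing care is checking that the hypotheses of \Cref{thm:gcd} hold at every stage: each $G_i$ is $d$-rigid and has at least $d+1$ vertices, and the realisation numbers are finite so that divisibility is meaningful.
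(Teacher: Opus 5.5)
Your proposal is correct and follows the paper's argument. The paper obtains this corollary by repeatedly applying \Cref{cor:globallylinked}, whose content is exactly your key step: a strict decrease combined with the divisibility from \Cref{thm:gcd} forces $c_d(G_{i-1}) \geq 2c_d(G_i)$. Chaining this and using $c_d(G_k)=1$ from global rigidity gives the bound, and your extra checks (rigidity of each $G_i$, the case $|V|\le d$, finiteness) are valid bookkeeping that the paper leaves implicit.
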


While it is unlikely to lead to an efficient algorithm, we use these ideas to give a simple upper bound on the number of edges one must add to a $d$-rigid graph $G$ so that the resulting supergraph is globally $d$-rigid. We note that this global rigidity augmentation problem is well studied but seemingly only in low dimensions. In particular,
\cite{km22} gives a polynomial time algorithm for the global rigidity augmentation problem in 2-dimensions. 
The 1-dimensional case is better known as the 2-connectivity augmentation problem \cite{et76}.

\begin{corollary}\label{cor:augment}
    Let $G$ be a $d$-rigid graph.
    Let $F$ be the smallest set of non-edges of $G$ such that $G+F$ is globally $d$-rigid.
    If $c_d(G) = p_1 \cdots p_k$ for (possibly not distinct) primes $p_i$,
    then $|F| \leq k$. 
\end{corollary}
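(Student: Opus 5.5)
We argue by strong induction on the number of prime factors $k$ of $c_d(G)$, counted with multiplicity, adding one non-edge at a time. The key tool is \Cref{thm:gcd}. If $G'$ is any $d$-rigid graph obtained from $G$ by adding edges, then $G$ is a spanning $d$-rigid subgraph of $G'$, so $c_d(G') \mid c_d(G)$. Hence a single edge addition either leaves the realisation number unchanged or replaces it by a proper divisor. A proper divisor of $p_1\cdots p_k$ has at most $k-1$ prime factors counted with multiplicity.

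The base case is $k=0$, i.e.\ $c_d(G)=1$. If $|V|\ge d+1$, then $G$ is globally $d$-rigid by the result of Gortler and Thurston recalled in the introduction, so $F=\emptyset$ works. For $|V|\le d$, \Cref{def:count} gives $c_d(G)=1$ exactly when $G$ is complete, and a complete graph needs no added edges. We should also dispose of small or degenerate cases. If $|V|\le d$ and $G$ is not complete, then $c_d(G)=\infty$ and the statement is vacuous, since there is no finite prime factorisation. So we assume $|V|\ge d+1$ throughout.

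For the inductive step, suppose $c_d(G)>1$, so $G$ is not globally $d$-rigid. The step that needs care is finding a single non-edge $ij$ with $c_d(G+ij)<c_d(G)$; we get this from \Cref{cor:globallylinked}. Take a generic $p$. Since $c_d(G)>1$, there is a realisation $q$ equivalent to $p$ but not congruent to it. Because $(G,p)$ is generic, its points affinely span $\CC^d$, and since $|V|\ge d+1$, congruence is detected by $f_{K_V,d}$. Therefore some pair $i,j$ satisfies $\|p_i-p_j\|^2\neq\|q_i-q_j\|^2$, and this pair is necessarily a non-edge of $G$. \Cref{cor:globallylinked} then gives $c_d(G+ij)\le \tfrac12 c_d(G)<c_d(G)$. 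Combined with the divisibility above, $c_d(G+ij)$ is a proper divisor of $p_1\cdots p_k$, hence has at most $k-1$ prime factors counted with multiplicity. Note that $G+ij$ is still $d$-rigid and has at least $d+1$ vertices.

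By induction, there is a set $F'$ of at most $k-1$ non-edges of $G+ij$ such that $G+ij+F'$ is globally $d$-rigid. Then $F'\cup\{ij\}$ is a set of at most $k$ non-edges of $G$ whose addition makes $G$ globally $d$-rigid. The minimal such set $F$ therefore satisfies $|F|\le k$.
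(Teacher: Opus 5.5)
Your proof is correct and takes the same approach as the paper. The paper also picks a non-edge $ij$ with $c_d(G+ij)<c_d(G)$, uses \Cref{thm:gcd} to conclude that $c_d(G+ij)$ is a proper divisor of $c_d(G)$ with at most $k-1$ prime factors, and iterates. Your version adds detail the paper's short proof leaves implicit: you justify the existence of that non-edge via the $f_{K_V,d}$ congruence criterion and \Cref{cor:globallylinked}, and you handle the base and small-vertex cases.
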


\begin{proof}
    As $c_d(G) > 1$,
    there exists a pair of non-adjacent vertices $i,j$ such that $c_d(G + ij) < c_d(G)$.
    By \Cref{thm:gcd},
    it follows that the prime decomposition of $c_d(G + ij)$ has length at most $k-1$.
    The proof now follows from repeated application.
\end{proof}

\Cref{cor:augment} is not tight in general.
For example, the graph $G_2$ given in \Cref{fig:prismadapted} has 2-realisation number $45 = 3 \cdot 3 \cdot 5$,
however, adding any non-edge to $G_2$ always produces a globally 2-rigid graph. 
On the other hand, there do exist some graphs for which it is tight. Let $G$ be the complete multipartite graph $K_{1,\dots,1,t}$ with $d$ parts of size 1 and one part of size $t\geq 1$. Then $c_d(G)=2^{t-1}$. Any supergraph obtained from $G$ by adding strictly less than $t-1$ edges is not $(d+1)$-connected and hence not globally $d$-rigid. Hence \Cref{cor:augment} implies that the smallest set of non-edges required to augment $G$ to a globally $d$-rigid graph has cardinality $t-1$.

\subsection{Proper rigid graphs}

We now prove \Cref{thm:isosubreal}.

\isosubreal*

\begin{proof}
    Choose vertices $v_1,\ldots,v_d \in V(H)$ to define both $X_{G,d}$ and $X_{H,d}$, and let $p$ be a general point in $X_{G,d}$.

    \begin{claim}
        The map
        \begin{equation*}
            h : \tilde{f}^{-1}_{G-E(H),d}(\tilde{f}_{G-E(H),d}(p)) \rightarrow X_{H,d}, \quad q \mapsto (q(v))_{v \in V(H)}
        \end{equation*}
        is generically finite.
    \end{claim}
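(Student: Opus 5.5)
The plan is to deduce the claim from a statement about a single combined map on all of $X_{G,d}$, and then restrict to the fibre through $p$. Write $G' := G - E(H)$ and $K := G' + E(\widetilde{H})$. Define
\begin{equation*}
    \Phi : X_{G,d} \rightarrow \CC^{E(G')} \times X_{H,d}, \quad q \mapsto \bigl(f_{G',d}(q), (q(v))_{v \in V(H)}\bigr).
\end{equation*}
Note that $\Phi^{-1}(\{\tilde f_{G',d}(p)\} \times X_{H,d})$ is exactly the domain of $h$, and on this set $\Phi$ agrees with $h$ up to the constant first coordinate. So it suffices to show that $\Phi$ is dominant with finite fibres over a non-empty Zariski open set, and that both properties survive restriction to a general slice $\{\tilde f_{G',d}(p)\} \times X_{H,d}$.

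The first step is a dimension count. Since $K$ and $\widetilde{H}$ are both minimally $d$-rigid, we have $|E(K)| = d|V(G)| - \binom{d+1}{2}$ and $|E(\widetilde{H})| = d|V(H)| - \binom{d+1}{2}$. Subtracting gives $|E(G')| = d(|V(G)| - |V(H)|)$. Hence
\begin{equation*}
    \dim X_{G,d} = |E(G')| + \dim X_{H,d},
\end{equation*}
so domain and codomain of $\Phi$ have the same dimension.

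The second step is to show that $d\Phi(q)$ is injective at a quasi-generic $q \in X_{G,d}$. Suppose $u \in X_{G,d}$ lies in $\ker d\Phi(q)$. Then $u$ is an infinitesimal motion of $(G',q)$ that vanishes on $V(H)$. It is therefore trivially an infinitesimal motion of the edges of $\widetilde{H}$, so $u \in \ker R(K,q) \cap X_{G,d}$. The pinned vertices $q(v_1), \dots, q(v_d)$ are affinely independent, and $(K,q)$ is infinitesimally rigid, since $K$ is $d$-rigid and $q$ is quasi-generic. By \Cref{lem:pinnedrigmatrix}, $d\tilde f_{K,d}(q)$ then has full rank $\dim X_{G,d}$, which forces $u = 0$. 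So $\rank d\Phi(q)$ equals the dimension of the codomain.

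The third step applies \Cref{lem:dominant}. From the rank computation, $\Phi$ is dominant. Its general fibres have dimension $\dim X_{G,d} - \dim(\CC^{E(G')} \times X_{H,d}) = 0$, so they are finite. Thus there is a proper Zariski closed set $Z \subset \CC^{E(G')} \times X_{H,d}$ outside of which $\Phi$ has finite fibres. In addition, the image of $\Phi$ contains a dense open set $W$.

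The final step, which I expect to be the delicate one, is transferring these properties to the slice through $\tilde f_{G',d}(p)$. Because $K$ is minimally rigid, $G'$ is independent, so $\tilde f_{G',d}$ is dominant onto $\CC^{E(G')}$. For a general point $p$, the value $\tilde f_{G',d}(p)$ therefore avoids the proper closed set of first coordinates $x$ for which the slice $\{x\} \times X_{H,d}$ is contained in $Z$ or misses $W$. Hence on the slice, $W$ restricts to a dense open subset lying in the image of $h$, so $h$ is dominant. Likewise the complement of $Z$ restricts to a non-empty open subset of $X_{H,d}$ over which $h^{-1}(y) = \Phi^{-1}(\tilde f_{G',d}(p), y)$ is finite. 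Together these show that $h$ is generically finite. The care needed here is in checking that the genericity conditions just listed are really Zariski-open conditions on $p$, so that a general point $p$ satisfies them.
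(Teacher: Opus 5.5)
Your proof is correct, but it is organised differently from the paper's.

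The paper works directly on the fibre $\widetilde{Z}_p$. It gets $\dim \widetilde{Z}_p = |E(\widetilde{H})|$ from the fibre-dimension part of \Cref{lem:dominant} applied to $\tilde{f}_{G-E(H),d}$. It then shows $p$ is a non-singular point of $\widetilde{Z}_p$ by comparing with the tangent space $\ker R(G-E(H),p)\cap X_{G,d}$. Finally it proves $\rank dh(p) = |E(\widetilde{H})|$ by exhibiting the flexes $u_{xy}$ of $\widetilde{G}-xy$ for $xy\in E(\widetilde{H})$ and checking that their images under $dh(p)$ are independent.

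You instead apply \Cref{lem:dominant} once, to $\Phi = (\tilde{f}_{G',d}, h)$ on the smooth irreducible space $X_{G,d}$. There, full rank of $d\Phi(q)$ is the one-line observation that a flex of $G'$ vanishing on $V(H)$ is a flex of $K$. This is the same linear algebra as the paper's $u_{xy}$ argument, phrased as a kernel computation.

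The price is your final slice step, which the paper avoids by working at $p$ itself. That step is sound. Expand the defining equations of $Z$ in the $X_{H,d}$-coordinates; their coefficients, as polynomials in $\lambda$, cut out the set of $\lambda$ with $\{\lambda\}\times X_{H,d}\subseteq Z$. So this set is closed, and it is proper because $Z$ is. The same holds for the complement of $W$. The preimage of a proper closed set under the dominant map $\tilde{f}_{G',d}$ is then a proper closed subset of $X_{G,d}$.

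In exchange, you never need non-singularity of $p$ in the possibly reducible fibre $\widetilde{Z}_p$, nor the equidimensionality of $\widetilde{Z}_p$, which the paper gets from the fibre-dimension statement. Your formulation also shows $h$ to be a general slice of a single generically finite map that depends only on $G-E(H)$ and $V(H)$. That makes the reuse of $h$ in \Cref{cor:rigidsubgraphsub} transparent.

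Both arguments implicitly use that $\widetilde{H}$ spans $V(H)$, so that $|E(\widetilde{H})| = \dim X_{H,d}$.
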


    \begin{claimproof}
        For notational convenience, we write 
        \begin{align*}
        Z_p &= f^{-1}_{G-E(H),d}(f_{G-E(H),d}(p)) \subseteq (\CC^d)^{V} \, ,\\ \widetilde{Z}_p &= \tilde{f}^{-1}_{G-E(H),d}(\tilde{f}_{G-E(H),d}(p)) \subseteq X_{G,d} \cong \CC^{d|V| - \binom{d+1}{2}} \, ,
        \end{align*}
        hence $\widetilde{Z}_p$ is the domain of $h$.
        We first note that $\dim(\widetilde{Z}_p) = |E(\widetilde{H})| = \dim(X_{H,d})$.
        The second equality follows from $\widetilde{H}$ being minimally $d$-rigid.
        For the first equality, $\widetilde{Z}_p$ has the dimension of a generic fiber of $\tilde{f}_{G-E(H),d}$. 
        As $\tilde{f}_{G-E(H),d}$ is dominant by \Cref{l:xgd}, it follows from \added{\Cref{lem:dominant}} that
        \[
        \dim \widetilde{Z}_p = \dim X_{G-E(H),d} - \dim \ell_{d}(G-E(H)) = d|V| - \binom{d+1}{2} - |E(G)| + |E(H)| = |E(\widetilde{H})| \, ,
        \]
        where the final equality follows from $\widetilde{G} := G - E(H) + E(\widetilde{H})$ being minimally $d$-rigid.
        As the domain and codomain of $h$ have the same dimension, it suffices to prove that $h$ is dominant by \added{\Cref{lem:dominant}}.
        We do this by showing $p$ is non-singular and $\rank dh(p) = |E(\widetilde{H})|$.
        
        As the Jacobian of $f_{G-E(H),d}$ at $p$ is the rigidity matrix $R(G-E(H),p)$, it follows that the tangent space of $Z_p$ at $p$ is $\ker R(G-E(H),p)$.
        As $\widetilde{Z}_p = Z_p \cap X_{G,d}$, it follows that the tangent space of $\widetilde{Z}_p$ at $p$ is $\ker R(G-E(H),p) \cap X_{G,d}$.
        Hence, the derivative of $h$ at $p$ is the linear projection map
        \begin{equation*}
            dh(p) : \ker R(G-E(H),p) \cap X_{G,d} \rightarrow X_{H,d}, ~ q \mapsto (q(v))_{v \in V(H)}.
        \end{equation*}
        We observe that the space $\ker R(G-E(H),p)\cap X_{G,d}$ has dimension $|E(\widetilde{H})|$ as follows.
        As $\widetilde{G}$ is a minimally $d$-rigid, all the rows of $R(\widetilde{G},p)$ are linearly independent, and the intersection of its kernel with $X_{G,d}$ is zero-dimensional.
        We get $R(G-E(H),p)$ from $R(\widetilde{G},p)$ by removing $|E(\widetilde{H})|$ rows, hence $\ker R(G-E(H),p)\cap X_{G,d}$ has dimension $|E(\widetilde{H})|$.
        This implies that $\widetilde{Z}_p$ and the tangent space at $p$ have the same dimension, hence $p$ is non-singular.

        It remains to show that $dh(p)$ has rank $|E(\widetilde{H})|$.
        Recall that as $\widetilde{G}$ is minimally $d$-rigid, we have $\ker(R(\widetilde{G},p)) \cap X_{G,d} = 0$ and each row of $R(\widetilde{G},p)$ is linearly independent.
        Hence for each edge $xy \in E(\widetilde{H})$, there exists a unique (up to scaling) infinitesimal flex $u_{xy} \in \ker R(\widetilde{G}-xy,p) \cap X_{G,d}$.
        Explicitly, we have
        \begin{align*}
            \big(p(x)-p(y) \big) \cdot \big( u_{xy}(x) - u_{xy}(y) \big) &\neq 0, \\
            \big(p(v)-p(w) \big) \cdot \big( u_{xy}(v) - u_{xy}(w) \big) &= 0 \qquad \text{ for all } vw \in E(G) \setminus \{xy\}.
        \end{align*}
        These conditions imply that $\left\{dh(p) \big( u_{xy} \big): xy \in E(\widetilde{H}) \right\}$ is a linearly independent set.
        This then implies that the rank of $dh(p)$ is $|E(\widetilde{H})|$ as required.
    \end{claimproof}

    Now fix a general point $q \in \tilde{f}^{-1}_{G-E(H),d}(\tilde{f}_{G-E(H),d}(p))$.
    It is immediate that
    \begin{equation*}
        \tilde{f}_{G,d}^{-1}(\tilde{f}_{G,d}(q)) = (\tilde{f}_{H,d} \circ h)^{-1} (\tilde{f}_{H,d} \circ h) (q).
    \end{equation*}
    Since both $h$ and $\tilde{f}_{H,d}$ are generically finite (the latter being a consequence of \Cref{l:xgd} and $H$ being $d$-rigid),
    their composition multiplies their degrees:
    \begin{equation*}
         \left|(\tilde{f}_{H,d} \circ h)^{-1} (\tilde{f}_{H,d} \circ h)(q) \right| = \deg (\tilde{f}_{H,d} \circ h) = (\deg \tilde{f}_{H,d}) (\deg h).
    \end{equation*}
    Hence, by \Cref{l:xgd} we have
    \begin{equation}\label{eq:subeq}
        c_d(G) = 2^d \left| \tilde{f}_{G,d}^{-1}(\tilde{f}_{G,d}(q)) \right| = 2^d \deg (\tilde{f}_{H,d} \circ h) = 2^d (\deg \tilde{f}_{H,d}) (\deg h) = c_d(H) (\deg h).
    \end{equation}
    This now concludes the proof.
\end{proof}

\section{Graph operations}
\label{sec:operations}

In this section we investigate the behaviour of realisation numbers with respect to a variety of different graph operations.

\subsection{0-extensions}

The simplest graph operation that preserves rigidity is the \emph{$d$-dimensional 0-extension}.
This operation on $G$ adds a new vertex $x$ and $d$ new edges $xw_1, \dots, xw_d$ to distinct vertices $w_i$ of $G$.
A schematic of a 3-dimensional 0-extension is given in \Cref{fig:0-extension}.
It is well known that this operation preserves (minimal) rigidity \cite[Lemma 11.1.1]{Wlong}.
Moreover, it precisely doubles the realisation number.

\begin{figure}
    \centering
    \begin{tikzpicture}[scale=1]

\draw (0,0) circle (30pt);
\draw (4,0) circle (30pt);

\filldraw (4,2) circle (2pt) node[anchor=south]{$x$};

\filldraw (.5,.5) circle (2pt) node[anchor=north]{$w_3$};
\filldraw (4.5,.5) circle (2pt) node[anchor=north]{$w_3$};

\filldraw (-.5,.5) circle (2pt) node[anchor=north]{$w_1$};
\filldraw (3.5,.5) circle (2pt) node[anchor=north]{$w_1$};

\filldraw (0,0) circle (2pt) node[anchor=north]{$w_2$};
\filldraw (4,0) circle (2pt) node[anchor=north]{$w_2$};



\draw[black,thick]
(4.5,.5) -- (4,2); 

\draw[black,thick]
(4,2) -- (4,0);

\draw[black,thick]
(3.5,.5) -- (4,2); 

\draw[black]
(1.5,0) -- (2.5,0);

\draw[black,thick]
(2.4,-.2) -- (2.5,0) -- (2.4,.2);
\end{tikzpicture}
    \caption{A schematic of 3-dimensional 0-extension.}
    \label{fig:0-extension}
\end{figure}
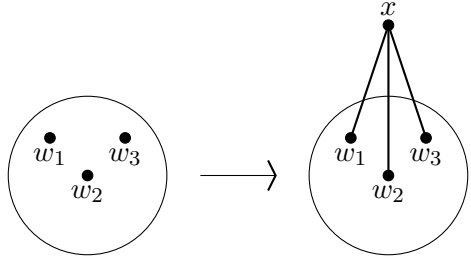

\begin{lemma}[{\cite[Lemma 7.1]{dewar23}}]\label{lem:0ext}
    Let $G = (V,E)$ be a $d$-rigid graph with at least $d+1$ vertices.
    If $G'$ is obtained from $G$ via a 0-extension, then $c_d(G') = 2c_d(G)$.
\end{lemma}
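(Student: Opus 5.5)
We will count the equivalent realisations of a generic framework $(G',p)$ directly, working with the fibre of $f_{G',d}$ rather than the pinned map. By \Cref{thm:generic-implies-general} it suffices to take $p$ generic in $(\CC^d)^{V\cup\{x\}}$. Any $q$ equivalent to $p$ restricts on $V$ to a realisation $q|_V$ equivalent to $p|_V$, and $p|_V$ is generic. Hence the equivalence classes in $C_d(G',p)$ map to the $c_d(G)$ classes of $C_d(G,p|_V)$. Moreover, by \Cref{lem:rigid+generic+implies+quasi}, each $q|_V$ is quasi-generic.

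Since congruence acts compatibly, we may fix one representative $q|_V$ in each class of $C_d(G,p|_V)$. It then suffices to show that, for quasi-generic $q|_V$ equivalent to $p|_V$, there are exactly two positions $y\in\CC^d$ for $x$ satisfying
\[
\|y-q(w_i)\|^2=\|p(x)-p(w_i)\|^2 \quad (i=1,\dots,d).
\]
We also need the two resulting realisations of $G'$ to be non-congruent.

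Subtracting the first equation from the others gives $d-1$ linear equations $2(q(w_1)-q(w_i))\cdot y = c_i$. Because $q|_V$ is quasi-generic, the points $q(w_1),\dots,q(w_d)$ are affinely independent. So these equations cut out an affine line $L=a+\CC n$, where $n$ spans the orthogonal complement of the span of the differences $q(w_1)-q(w_i)$. Substituting into the first equation gives a quadratic in $t$ with leading coefficient $\|n\|^2$.

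The main obstacle is the non-degeneracy of this quadratic. Over $\CC$ the vector $n$ could be isotropic, or the discriminant could vanish. Here genericity must be used carefully. By applying an isometry taking $p|_V$ to $q|_V$ (they are congruent up to the class choice, and quasi-genericity lets us transport genericity), we reduce to the case where the configuration $p(w_1),\dots,p(w_d)$ and the lengths $\|p(x)-p(w_i)\|^2$ are generic. We first note that $\|n\|^2\neq 0$ is a nontrivial polynomial condition in the coordinates of the $w_i$, since the Gram determinant is nonzero, and it holds for generic points.

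The solutions are $y$ and $y'$, where $y'$ is the reflection of $y$ in the affine hyperplane through the $q(w_i)$. The reflection is a well-defined isometry because $\|n\|^2\ne0$. This shows the two roots come as a reflected pair. They are distinct unless $y$ lies on that hyperplane, and this cannot happen when $p(x)$ is algebraically independent of the other coordinates, because the quadratic's roots are algebraic over the field generated by $q|_V$ and the lengths.

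Finally, we check non-congruence. Since $|V|\ge d+1$ and $q|_V$ is quasi-generic, $q(V)$ affinely spans $\CC^d$. Any isometry fixing $q|_V$ is therefore the identity, so the two extensions are non-congruent. Conversely, an isometry identifying two extensions of different representatives would identify those representatives, which is excluded. Hence $|C_d(G',p)|=2c_d(G)$.
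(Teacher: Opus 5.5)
Your overall architecture is sound, and it is the complex analogue of the paper's argument for \Cref{prop:0extreal}; the paper itself only cites \cite{dewar23} for this lemma. The steps that work are: restricting to $V$; showing that each class of $C_d(G,p|_V)$ has exactly two preimages, obtained by reflecting $x$ through the affine span of the $q(w_i)$; and getting non-congruence from $q(V)$ affinely spanning $\CC^d$. The isotropy check $\|n\|^2\neq 0$ via the isometry-invariant Gram determinant is also fine.

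\textbf{The gap.} What is not justified is that the quadratic has two \emph{distinct} roots for every class, not just for the class of $p|_V$. For $p|_V$ itself this is trivial, since $p(x)$ is not in the affine span of the $p(w_i)$. For the other classes, your reduction ``apply an isometry taking $p|_V$ to $q|_V$'' is invalid. Distinct classes are by definition non-congruent, and $q|_V$ need not even have the same distances $\|p(w_i)-p(w_j)\|^2$ when $w_iw_j\notin E$. Your closing remark, that the roots are algebraic over $\QQ(q|_V,\ell)$, does not exclude a double root on its own. Genericity of $p(x)$ over $p|_V$ tells you nothing about $q|_V$ until you relate the two.

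\textbf{The missing idea and how to fix it.} You need to control the field of definition of the other classes. Whether a double root occurs is isometry invariant, so take the representative $q^*\in X_{G,d}$ given by \Cref{lem:quasixgd}. By \Cref{lem:generic+can+pos}, $\overline{\QQ(q^*)}=\overline{\QQ(f_{G,d}(p|_V))}\subseteq\overline{\QQ(p|_V)}$. The coordinates of $p(x)$ are algebraically independent over $\QQ(p|_V)$, and $z\mapsto(\|z-p(w_i)\|^2)_{i}$ is dominant, since its Jacobian is nonsingular off the hyperplane through the $p(w_i)$. Hence $\ell=(\ell_1,\dots,\ell_d)$ is algebraically independent over $\overline{\QQ(p|_V)}$. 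Now suppose a double root $y$ lay on the hyperplane through $q^*(w_1),\dots,q^*(w_d)$. Then $\ell$ would lie in the image of the $(d-1)$-parameter family $\{(\|z-q^*(w_i)\|^2)_i : z\in\operatorname{aff}\{q^*(w_i)\}\}$, which is defined over $\overline{\QQ(p|_V)}$. That forces a nontrivial polynomial relation among the $\ell_i$ over $\overline{\QQ(p|_V)}$, a contradiction. With this step inserted, your count $|C_d(G',p)|=2c_d(G)$ goes through.
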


In fact, we can show that $d$-dimensional 0-extensions also double the number of real realisations.
To show this, we first need to introduce the following terminology.
A \emph{Euclidean distance matrix} is any square matrix of the form
\begin{equation*}
    D := 
    \begin{pmatrix}
        \|y_i - y_j\|^2
    \end{pmatrix}_{i,j \in [n]}
\end{equation*}
for some set of points $y_1,\ldots,y_n$ in $\mathbb{R}^d$;
we additionally say that $D$ is the Euclidean distance matrix for the points $y_1,\ldots,y_n$.
We can exactly characterise which matrices are Euclidean distance matrices using a famous result of Schoenberg.

\begin{theorem}[Schoenberg \cite{schon}]\label{thm:schoen}
    Let $D$ be an $n \times n$ matrix with non-negative entries and zero entries for its diagonal.
    Then $D$ is a Euclidean distance matrix for a set of points with $d$-dimensional affine span if and only if the $(n-1) \times (n-1)$ matrix
    \begin{equation*}
        G :=
        \begin{pmatrix}
            D_{i,n} + D_{j,n} - D_{i,j}            
        \end{pmatrix}_{i,j \in [n-1]}
    \end{equation*}
    is positive semi-definite with rank $d$.
\end{theorem}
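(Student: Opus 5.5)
The plan is to reduce \Cref{thm:schoen} to the elementary correspondence between Gram matrices and point configurations, with the $n$-th point placed at the origin. The key identity is the polarisation formula: for vectors $y_i,y_j,y_n \in \RR^d$,
\begin{equation*}
    \|y_i-y_n\|^2 + \|y_j-y_n\|^2 - \|y_i-y_j\|^2 = 2\,(y_i-y_n)\cdot(y_j-y_n).
\end{equation*}
With this identity, the matrix $G$ in the statement is exactly twice the Gram matrix of the translated vectors $x_i := y_i - y_n$, for $i\in[n-1]$. Both directions of the theorem then follow from standard facts about Gram matrices.

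\textbf{Forward direction.} Suppose $D$ is the Euclidean distance matrix of points $y_1,\ldots,y_n \in \RR^d$ whose affine span has dimension $d$. Let $Y$ be the $d\times(n-1)$ real matrix whose columns are $x_i = y_i - y_n$. By the identity above, $G = 2Y^\top Y$. This gives the two required properties.
\begin{itemize}
    \item $G$ is positive semi-definite, since $z^\top G z = 2\|Yz\|^2 \geq 0$ for every $z\in\RR^{n-1}$.
    \item The rank satisfies $\rank G = \rank Y = \dim \operatorname{span}\{x_1,\ldots,x_{n-1}\}$. This is the dimension of the affine span of $y_1,\ldots,y_n$, which is $d$.
\end{itemize}

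\textbf{Converse.} Suppose $G$ is positive semi-definite with rank $d$.
\begin{enumerate}
    \item By the spectral theorem, $G = Q\Lambda Q^\top$, where $Q$ is orthogonal and $\Lambda$ is diagonal with exactly $d$ positive entries and all other entries zero. Discarding the zero eigenvalues gives a real $d\times(n-1)$ matrix $Y$ of rank $d$ with $G = 2Y^\top Y$.
    \item Define $y_i$ to be the $i$-th column of $Y$ for $i\in[n-1]$, and set $y_n := \mathbf{0}$.
    \item Check the distances to $y_n$. Since $D_{n,n}=0$, we have $G_{ii} = 2D_{i,n}$. Hence $\|y_i - y_n\|^2 = \|y_i\|^2 = \tfrac12 G_{ii} = D_{i,n}$.
    \item Check the remaining distances. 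For $i,j\in[n-1]$,
    \begin{equation*}
        \|y_i-y_j\|^2 = \tfrac12\left(G_{ii}+G_{jj}-2G_{ij}\right) = D_{i,n}+D_{j,n}-\left(D_{i,n}+D_{j,n}-D_{i,j}\right) = D_{i,j}.
    \end{equation*}
    \item By symmetry of distances, and using $D_{i,i}=0$ for the diagonal entries, $D$ is the Euclidean distance matrix of $y_1,\ldots,y_n$.
    \item The affine span of these points is the linear span of the columns of $Y$, since $y_n$ is the origin. This has dimension $\rank Y = d$.
\end{enumerate}

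\textbf{Main obstacle.} There is no real difficulty; the only point needing care is bookkeeping. The factorisation in step 1 must produce exactly $d$ real rows, which uses both positive semi-definiteness (so that real square roots of the eigenvalues exist) and the rank hypothesis. The zero-diagonal hypothesis on $D$ is what allows $\|y_i\|^2$ to be identified with $D_{i,n}$ in step 3. The non-negativity of the entries of $D$ is not actually used; it is automatically consistent with the conclusion.
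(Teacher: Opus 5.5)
The paper gives no proof of this statement; it is quoted from Schoenberg. Your argument is the standard one: the polarisation identity $G = 2Y^{\top}Y$, combined with the correspondence between positive semi-definite matrices of rank $d$ and Gram matrices of $d$-dimensional vector configurations. It is correct in both directions, with two small repairs.

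\textbf{Step 3.} There you have $G_{ii} = 2D_{i,n} - D_{i,i}$. The hypothesis you need is therefore $D_{i,i} = 0$ for $i \in [n-1]$. The entry $D_{n,n}$ does not occur in $G$; it is only needed in step 5.

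\textbf{Step 5.} This step quietly assumes that $D$ is symmetric.
\begin{itemize}
\item The matrix $G$ sees only the block $(D_{i,j})_{i,j\in[n-1]}$ and the last column $(D_{i,n})_{i\in[n-1]}$.
\item Symmetry of $G$, which you need anyway for the spectral theorem, forces $D_{i,j}=D_{j,i}$ for $i,j\in[n-1]$. Nothing controls the last row.
\item Take a genuine Euclidean distance matrix and change $D_{n,1}$ to a different non-negative value. Then $G$ is unchanged, but $D$ is no longer symmetric and so is not a distance matrix. Hence the converse is false as literally stated.
\end{itemize}
You should add symmetry of $D$ as a hypothesis. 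It is implicit in Schoenberg's theorem, and it holds in the paper's only use of it, in \Cref{lem:edm}. With it you get $\|y_n-y_i\|^2 = D_{i,n} = D_{n,i}$, and step 5 goes through.
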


We also require the following corollary to \Cref{thm:schoen}.

\begin{corollary}[see, for example, {\cite[Corollary 3.3.3]{so2007}}]\label{cor:coredm}
    If $D$ is an $n \times n$ Euclidean distance matrix,
    then it is negative semi-definite on the points in the linear space orthogonal to $(1,\ldots,1) \in \mathbb{R}^d$.
\end{corollary}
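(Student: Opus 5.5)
We will prove this directly from the definition of a Euclidean distance matrix, without needing the full strength of \Cref{thm:schoen}. (Note that the vector $(1,\ldots,1)$ should be read as lying in $\mathbb{R}^n$, since $D$ acts on $\mathbb{R}^n$.) The strategy is to expand each entry of $D$ via the polarisation identity. The terms that depend on only one index then vanish on the orthogonal complement of $(1,\ldots,1)$. What remains is minus twice a squared norm.

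Concretely, let $D$ be the Euclidean distance matrix for points $y_1,\ldots,y_n \in \mathbb{R}^d$. For all $i,j$ we write
\begin{equation*}
    D_{i,j} = \|y_i\|^2 + \|y_j\|^2 - 2\, y_i \cdot y_j .
\end{equation*}
Now fix $x = (x_1,\ldots,x_n) \in \mathbb{R}^n$ with $\sum_{i=1}^n x_i = 0$, and expand the quadratic form:
\begin{equation*}
    x^{\top} D x = \Big(\sum_{j} x_j\Big)\sum_i x_i \|y_i\|^2 + \Big(\sum_i x_i\Big)\sum_j x_j\|y_j\|^2 - 2 \sum_{i,j} x_i x_j \, y_i\cdot y_j .
\end{equation*}
The first two terms vanish because $\sum_i x_i = 0$. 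The last term equals $-2\,\big\|\sum_i x_i y_i\big\|^2$, which is at most $0$. Hence $x^{\top} D x \leq 0$ for every $x$ orthogonal to $(1,\ldots,1)$, which is the claim.

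An alternative route goes through \Cref{thm:schoen}. After translating so that $y_n = 0$, the matrix $G$ there is twice the Gram matrix of $y_1,\ldots,y_{n-1}$. One then checks that $x^{\top} D x = -x'^{\top} G x'$, where $x'$ is the restriction of $x$ to its first $n-1$ coordinates, again using $\sum_i x_i = 0$. Positive semi-definiteness of $G$ then gives the result. The direct computation above is shorter, so we will use it.

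There is no real obstacle here. The only point requiring care is the bookkeeping showing that the single-index terms cancel exactly because $x$ has zero coordinate sum.
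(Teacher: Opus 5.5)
Your proof is correct, including your reading of $(1,\ldots,1)$ as a vector in $\mathbb{R}^n$; the $\mathbb{R}^d$ in the statement is a typo. The paper does not prove this statement itself. It cites So's thesis and presents the statement as a corollary of \Cref{thm:schoen}, which is essentially the alternative route you sketched: translate so that $y_n = 0$, observe that the matrix in \Cref{thm:schoen} is twice the Gram matrix, and check that $x^{\top} D x = -x'^{\top} G x'$ on the zero-sum subspace.

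Your main argument bypasses Schoenberg entirely and expands the quadratic form directly. The two routes are really the same computation in different packaging, since only the trivial direction of \Cref{thm:schoen} matters here (Gram matrices are positive semi-definite). Your direct version does buy something concrete. The identity $x^{\top} D x = -2\|\sum_i x_i y_i\|^2$ for $\sum_i x_i = 0$ shows that $D$ is negative \emph{definite} on that subspace exactly when $y_1,\ldots,y_n$ are affinely independent.

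That sharper fact is what the later application in \Cref{lem:edm} actually relies on. There the points are affinely independent, and one wants $2rJ - Y$ to be positive definite for large $r$. Semi-definiteness alone would allow a zero direction in the orthogonal complement of $(1,\ldots,1)$, which would rule out positive definiteness for every $r$.
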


\begin{lemma}\label{lem:edm}
    Given affinely independent points $y_1,\ldots,y_d$ in $\mathbb{R}^d$,
    let
    \begin{equation*}
        f : \mathbb{R}^d \rightarrow \mathbb{R}^d, ~ z \mapsto \left( \|z-y_i\|^2 \right)_{i \in [d]}.
    \end{equation*}
    Then there exists an non-empty open subset $U \subset \mathbb{R}^d$ contained in the image of $f$ where the following holds:
    \begin{enumerate}
        \item For any $\lambda \in U$,
        there exist exactly two solutions to the equation $f(z) = \lambda$.
        \item For some $R>0$,
        the set $U$ contains the ray $\{(r, \ldots, r): r > R \}$.
    \end{enumerate}
\end{lemma}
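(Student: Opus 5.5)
The plan is to reduce the system $f(z)=\lambda$ to one linear system and one scalar quadratic equation. I will then show that on an open set containing the far part of the diagonal ray, the quadratic has a positive discriminant. Subtracting the equation $\|z-y_1\|^2=\lambda_1$ from $\|z-y_i\|^2=\lambda_i$ for $i=2,\dots,d$ gives the linear equations
\begin{equation*}
2 (y_1-y_i)\cdot z = \lambda_i-\lambda_1 + \|y_1\|^2-\|y_i\|^2, \qquad i=2,\ldots,d.
\end{equation*}
The vectors $y_i-y_1$ are linearly independent, so the solution set of these $d-1$ equations is a line $L_\lambda=\{a(\lambda)+t n : t\in\mathbb{R}\}$. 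Here $n$ is a fixed unit normal to the affine hyperplane $A$ spanned by $y_1,\dots,y_d$, and $a(\lambda)\in A$ depends affinely (hence continuously) on $\lambda$. Substituting into the first equation gives
\begin{equation*}
t^2 = \lambda_1-\|a(\lambda)-y_1\|^2 =: \Delta(\lambda).
\end{equation*}
So there are exactly two real solutions $z=a(\lambda)\pm\sqrt{\Delta(\lambda)}\,n$ precisely when $\Delta(\lambda)>0$.

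I will take $U=\{\lambda\in\mathbb{R}^d:\Delta(\lambda)>0\}$. This set is open because $\Delta$ is continuous. Every $\lambda\in U$ lies in the image of $f$, and item 1 holds by the computation above.

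For item 2, consider $\lambda=(r,\dots,r)$. The right-hand sides of the linear equations no longer depend on $r$, so $a(\lambda)=a_0$ is constant; geometrically, $a_0$ is the circumcentre of $y_1,\dots,y_d$ inside $A$. Hence $\Delta(r,\dots,r)=r-\|a_0-y_1\|^2$, which is positive for all $r>R:=\|a_0-y_1\|^2$. This shows $U$ contains the ray, and in particular $U$ is non-empty.

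There is no real obstacle here. The only point to check is that $L_\lambda$ really is a line, which follows from linear independence of the $y_i-y_1$ (the $d\times d$ rank argument). Theorem~\ref{thm:schoen} is not needed, though one could alternatively phrase the positivity of $\Delta$ via a Cayley--Menger/Schoenberg criterion.
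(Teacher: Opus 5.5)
Your proof is correct, and it takes a different and more elementary route than the paper.

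\textbf{The paper's route.} The paper defines $U$ as the image under $f$ of the complement of the affine span $A$ of $y_1,\dots,y_d$, and argues each item separately:
\begin{enumerate}
\item Openness comes from the inverse function theorem, since $df(z)$ is singular exactly on $A$.
\item ``Exactly two solutions'' comes from the fact that the only non-identity isometry fixing every $y_i$ is the reflection in $A$.
\item The diagonal ray comes from Schoenberg's criterion (\Cref{thm:schoen}) applied to the matrix $2rJ-Y$.
\end{enumerate}

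\textbf{Your route.} You eliminate directly: the $d-1$ difference equations cut out the line through $a(\lambda)$ normal to $A$, and the last equation becomes $t^2=\Delta(\lambda)$. Your set $\{\Delta>0\}$ is the same set as the paper's $U$, because the solutions $a(\lambda)\pm\sqrt{\Delta(\lambda)}\,n$ lie off $A$ exactly when $\Delta(\lambda)\neq 0$. But you get openness from continuity alone and the two-point count from an explicit formula. You also get the sharp threshold: $R$ is the squared circumradius of $y_1,\dots,y_d$.

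\textbf{What your route avoids.} The paper asserts that $2rJ-Y$ is positive definite if and only if $\mathbf{e}^{\top}(2rJ-Y)\mathbf{e}>0$. The latter condition only says that $r$ exceeds the mean squared distance of the $y_i$ to their centroid. That quantity is in general strictly smaller than the squared circumradius, so the condition is necessary but not sufficient. The paper's large-$r$ conclusion still holds, because $-Y$ is positive definite on $\mathbf{e}^{\perp}$ by affine independence.

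\textbf{What the paper's route buys.} It stays in the distance-matrix language the paper has just introduced, and its reflection argument is reused in the upper-bound half of \Cref{prop:0extreal}.

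\textbf{A cosmetic point.} For $d=1$ your threshold is $\|a_0-y_1\|^2=0$, so take any larger $R>0$ to match the statement.
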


\begin{proof}
    Fix $U$ to be the image under $f$ of the set of points that are not in the affine span of $y_1,\ldots,y_d$.
    We observe that the Jacobian $df(z)$ is singular if and only if $z$ lies in the affine span of $y_1,\ldots,y_d$.
    Hence, by the inverse function theorem (e.g., \cite[Theorem 9.24]{rudin}) the set $U$ is non-empty and open.
    
    Take any point $z^*$ not in the affine span of $y_1,\ldots,y_d$ and suppose that $z$ satisfies the equation $f(z) = f(z^*)$.
    Then there exists an isometry of $\mathbb{R}^d$ which maps $z$ to $z^*$ and maps each $y_i$ to itself.
    Since the points $y_1,\ldots,y_d$ are affinely independent,
    there exists a single isometry that fixes each point $y_i$: the reflection through the affine span of $y_1,\ldots,y_d$.
    Hence either $z=z^*$ or $z$ is the unique point found by reflecting through the affine span of $y_1,\ldots,y_d$.

    For each $r >0$,
    fix the matrix 
    \begin{equation*}
        G(r) = 
        \begin{pmatrix}
            2r - \|y_i - y_j\|^2
        \end{pmatrix}_{i,j \in [d]}.
    \end{equation*}
    By \Cref{thm:schoen}, $(r,\ldots,r)$ is contained in $U$ if and only if $G(r)$ is positive definite.
    It is immediate that $G(r) = 2r J - Y$,
    where $J$ is the all-ones matrix and
    \begin{equation*}
        Y := 
        \begin{pmatrix}
            \|y_i - y_j\|^2
        \end{pmatrix}_{i,j \in [d]}. 
    \end{equation*}
    Since $Y$ is the Euclidean distance matrix,
    the matrix $Y$ is negative semi-definite on the linear space orthogonal to $\mathbf{e} := (1,\ldots,1)$ by \Cref{cor:coredm}.
    It is clear that $\mathbf{e}$ is the eigenvector of $J$ with eigenvalue $d$ and $J$ is zero on the linear space orthogonal to $\mathbf{e}$.
    Hence $G(r)$ is positive definite if and only if 
    \begin{equation*}
        \mathbf{e}^{\top} G(r) \mathbf{e} = \mathbf{e}^{\top} (2rJ-Y) \mathbf{e} =  2 r d^2 - \mathbf{e}^{\top} Y \mathbf{e} > 0.
    \end{equation*}
    It is now easy to see that the above equation holds for sufficiently large $r>0$.
\end{proof}

\begin{proposition}\label{prop:0extreal}
    Let $G = (V,E)$ be a $d$-rigid graph with at least $d+1$ vertices.
    If $G'$ is obtained from $G$ via a 0-extension, then $r_d(G') = 2r_d(G)$.
\end{proposition}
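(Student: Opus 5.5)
We prove the two inequalities $r_d(G') \leq 2r_d(G)$ and $r_d(G') \geq 2r_d(G)$ separately. Throughout, let $x$ be the new vertex with neighbours $w_1,\ldots,w_d$.

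\textbf{Upper bound.} Take any generic $p' \in (\RR^d)^{V\cup\{x\}}$ and let $p = p'|_V$, which is generic for $G$. Any real realisation $q'$ equivalent to $p'$ restricts to a realisation $q'|_V$ equivalent to $p$. So up to congruence there are at most $r_d(G,p) \leq r_d(G)$ choices of $[q'|_V]$.

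Now fix a representative $q$ in one such class. By \Cref{lem:rigid+generic+implies+quasi}, $q$ is quasi-generic, so the points $q(w_1),\ldots,q(w_d)$ are affinely independent. Then the position of $x$ is determined by $d$ distance constraints to affinely independent points. By the argument in the proof of \Cref{lem:edm}, any two solutions differ by the reflection through $\mathrm{aff}\{q(w_i)\}$, so there are at most two.

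Congruences of $G'$-realisations restrict to congruences of $G$-realisations, and the restriction to $V$ is affinely spanning. Hence the classes of $q'$ map to classes of $q'|_V$ with fibres of size at most $2$. This gives $r_d(G',p') \leq 2r_d(G)$.

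\textbf{Lower bound.} Choose a generic $p \in (\RR^d)^V$ with $r_d(G,p) = r_d(G)$. Let $q^1,\ldots,q^k$, with $k = r_d(G)$, be representatives of the distinct real realisation classes. Each $q^j$ is quasi-generic, so its points $q^j(w_1),\ldots,q^j(w_d)$ are affinely independent.

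The main step, and the obstacle, is to place $x$ so that the prescribed squared lengths $\lambda = (\lambda_1,\ldots,\lambda_d)$ to $w_1,\ldots,w_d$ are realisable in every $q^j$ simultaneously, with two real solutions each, while keeping $p'$ generic. For this we use \Cref{lem:edm}. For each $j$ it gives an open set $U_j$ of $\lambda$ values with exactly two real solutions, and each $U_j$ contains a ray $\{(r,\ldots,r): r>R_j\}$. Taking $r > \max_j R_j$ shows that $U := \bigcap_j U_j$ is a non-empty open set.

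The map $z \mapsto (\|z - p(w_i)\|^2)_i$ is a local diffeomorphism off the affine hull of the $p(w_i)$. Hence the preimage of $U$ under it is a non-empty open subset of $\RR^d$, and we can pick $p(x)$ in it so that $p' = (p,p(x))$ is generic; generic points are dense. Then each $q^j$ extends to exactly two real realisations of $G'$ equivalent to $p'$, giving $2k$ realisations in total.

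It remains to check these $2k$ realisations are pairwise non-congruent. Two extensions of different $q^j$ are non-congruent, since their restrictions to $V$ are non-congruent. The two extensions $z$ and its reflection $z^*$ of the same $q^j$ could only be congruent via an isometry fixing $q^j(V)$ pointwise. Since $q^j(V)$ affinely spans $\RR^d$ (as $|V|\ge d+1$ and $q^j$ is quasi-generic), that isometry is the identity, and $z \neq z^*$ because $z$ lies off the hyperplane. Hence $r_d(G') \geq r_d(G',p') \geq 2r_d(G)$, completing the proof.
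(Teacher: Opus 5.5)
Your proof is correct and follows essentially the same route as the paper. For the upper bound you use the same restriction-plus-reflection argument via quasi-genericity, and for the lower bound the same intersection of the sets from \Cref{lem:edm}, which is non-empty because of the rays. The differences are minor:
\begin{itemize}
\item You choose $p(x)$ generic inside $f_p^{-1}(U)$, whereas the paper chooses $\lambda\in U$ algebraically independent over $\QQ(p)$. Note that non-emptiness of $f_p^{-1}(U)$ comes from $U$ lying in the set attached to $p$'s own class, not from the local-diffeomorphism property you cite.
\item You explicitly check that the $2r_d(G)$ extensions are pairwise non-congruent, which the paper leaves implicit.
\end{itemize}
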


\begin{proof}
    We first prove that $r_d(G') \geq 2r_d(G)$.
    Fix $x$ to be the new vertex added to $G$ to form $G'$, and let $w_1,\ldots,w_d$ be its neighbours in $G'$.
    Choose a real generic realisation $p$ of $G$ where $r_d(G,p)= r_d(G) = t$.
    Let $(G,p_1),\dots,(G,p_t)$ be pairwise non-congruent but equivalent $d$-dimensional real frameworks where $p_1=p$.
    As each $p_k$ is quasi-generic (\Cref{lem:rigid+generic+implies+quasi}),
    the points $p_k(w_1), \ldots, p_k(w_d)$ are affinely independent for each $k \in [t]$.
    
    For each $k \in [t]$,
    fix $f_k$ and $U_k$ to be the map $f$ and non-empty open set $U_k$ from \Cref{lem:edm} for the points $y_1 = p_k(w_1), \ldots, y_d = p_k(w_d)$.
    By \Cref{lem:edm},
    the set $U := \bigcap_{k=1}^t U_k$ is a non-empty open set in $\RR^d$;
    that it is non-empty stems from the fact that $U$ must contain some ray $\{(r,\ldots,r): r >R\}$ for some ray $R>0$.
    Hence we can pick $\lambda \in U$ whose coordinates are algebraically independent over the set of coordinates for $p$.
    By \Cref{lem:edm}, there exists two solutions to the equation $f_k(z) = \lambda$ that we label $a_k, b_k$.
    For each $1 \leq k \leq t$, we define the realisations $p'_{k,a}$ and $p'_{k,b}$ of $G'$ by $p'_{k,a}(v) = p'_{k,b}(v) = p_k(v)$ for all $v \in V$, and $p'_{k,a}(x) = a_k$ and $p'_{k,b}(x) = b_k$.
    Additionally, we set $p' = p'_{1,a}$: our choice of $\lambda$ as algebraically independent over $p$ implies that the coordinates of $a_1$ must also be algebraic independent over $p$, hence $p'$ is a generic realisation of $G'$.
    It follows that
    \begin{equation*}
        r_d(G') \geq r_d(G',p') \geq \left| \left\{p'_{1,a},p'_{1,b}, p'_{2,a},p'_{2,b},\ldots, p'_{t,a},p'_{t,b}  \right\}  \right| = 2t = 2r_d(G).
    \end{equation*}

    We now show that $r_d(G') \leq 2r_d(G)$.
    Choose a generic realisation $p'$ of $G'$ in $\mathbb{R}^d$ with $r_d(G',p') = r_d(G')$ and fix $p$ to be the restriction of $p'$ to $V$.
    Now choose any $(G',q')$ in $\mathbb{R}^d$ that is equivalent to $(G',p')$.
    Then, given $q$ is the restriction of $q'$ to $V$, we have that $(G,q)$ is equivalent to $(G,p)$.
    By \Cref{lem:rigid+generic+implies+quasi},
    both $(G',q')$ and $(G,q)$ are quasi-generic, and thus $q(w_1),\ldots,q(w_d)$ are affinely independent.
    Let $(G',q^*)$ be another framework where $q^*(v) = q(v)$ for each $v \in V$.
    Then there exists an isometry that maps $q'(x)$ to $q^*(x)$ and fixes each point $q(w_1),\ldots,q(w_d)$.
    Since $q(w_1),\ldots,q(w_d)$ are affinely independent,
    there exists a single isometry that fixes each point $q(w_i)$: the reflection through the affine span of $q(w_1),\ldots,q(w_d)$.
    Hence either $q^*(x)=q'(x)$ or $q^*(x)$ is the unique point found by reflecting through the affine span of $q(w_1),\ldots,q(w_d)$.
    It now follows that
    \begin{equation*}
        r_d(G') = r_d(G',p') \leq 2 r_d(G,p) \leq 2 r_d(G)
    \end{equation*}
    which concludes the proof.
\end{proof}


\subsection{Vertex splitting and spider-splitting}

\subsubsection{Lower bounds for complex realisations}


In this subsection we prove that the realisation number is preserved, and possibly even doubled, under two other graph operations:
vertex splitting and spider-splitting.
Before we do so, we require some technical lemmas regarding realisation numbers.

The proof of the following lemma requires the notion of \emph{multiplicity} of an isolated point.
For the formal definition, we refer to~\cite[Page 224]{Sommese+Wampler}.
However, we will only need the fact that an isolated solution $p \in \CC^n$ of $n$ polynomial equations has multiplicity one if the Jacobian is rank $n$, and multiplicity greater than one otherwise.

\begin{lemma}\label{lem:dominantmapfiber}
    Let $f \colon \CC^n \rightarrow \CC^n$ be a dominant polynomial map, i.e. $f(x) = (f_1(x), \dots, f_n(x))$ for $n$-variate polynomials $f_1, \dots, f_n$.
    For a point $\mu \in \CC^n$, define
    \begin{align*}
    S_\mu &:= \left\{ p \in f^{-1}(\mu) : p \text{ is an isolated point} \right\} \, \mbox{and} \\
    S'_\mu &:= \left\{ p \in S_\mu : \rank d f(p) < n  \right\} \, .
    \end{align*}
    Then for a general point $\lambda \in \CC^n$, we have
    \[
    |f^{-1}(\lambda)| \geq |S_\mu| + |S'_\mu| \, .
    \]
\end{lemma}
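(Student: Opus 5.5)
We plan to deduce this from the conservation-of-number principle for square polynomial systems, which says that multiplicities of isolated solutions are lower semicontinuous in the parameters. Concretely, we regard $f(x)-\mu=0$ as a parameterised family of $n$ equations in $n$ unknowns with parameter $\mu\in\CC^n$. The main tool is the standard homotopy continuation result in \cite{Sommese+Wampler} (the ``parameter continuation'' theorem). It states that for a general parameter value $\lambda$ the number $N$ of isolated solutions of $f(x)=\lambda$ is constant. Moreover, for any special parameter $\mu$, the sum of multiplicities of the isolated solutions of $f(x)=\mu$ is at most $N$. Since $f$ is dominant and the domain and codomain have equal dimension, \Cref{lem:dominant} shows that general fibres are finite. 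Hence $f^{-1}(\lambda)$ consists entirely of isolated points and $|f^{-1}(\lambda)|=N$.

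Given this, the key steps are as follows.
\begin{enumerate}
\item Fix $\mu$ and take a general $\lambda$ lying in the Zariski open set where fibres are finite and all points are nonsingular. By \Cref{lem:dominant}, at such $\lambda$ every solution has full-rank Jacobian and so has multiplicity one, and $N=|f^{-1}(\lambda)|$.
\item Consider the straight-line path $\mu_t=(1-t)\mu+t\lambda$. For all but finitely many $t$, the point $\mu_t$ is general, so $\lambda$ itself may be assumed to lie on a generic line through $\mu$.
\item Each isolated point $p\in S_\mu$ of multiplicity $m_p$ is the endpoint, as $t\to0$, of exactly $m_p$ solution paths $x(t)$ with $f(x(t))=\mu_t$ and $x(t)\to p$. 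This follows by a local degree argument: take a small ball $B$ around $p$ containing no other point of $f^{-1}(\mu)$. The local topological degree of $f$ at $p$ equals $m_p$, and for $\mu_t$ close to $\mu$ the fibre $f^{-1}(\mu_t)\cap B$ then consists of exactly $m_p$ points, using properness of $f|_{\partial B}$ away from $\mu$ and the fact that complex maps preserve orientation.
\item The balls around distinct isolated points are disjoint, so for $t$ small the counts add. This gives $|f^{-1}(\mu_t)|\ge\sum_{p\in S_\mu}m_p$.
\item Since the fibre cardinality is constant on the general open set, this value equals $|f^{-1}(\lambda)|$.
\item Finally, $m_p\ge1$ always, and $m_p\ge2$ exactly when $\rank df(p)<n$, that is, when $p\in S'_\mu$. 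Therefore $\sum_{p\in S_\mu} m_p\ge|S_\mu|+|S'_\mu|$, which is the claimed inequality.
\end{enumerate}

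The main obstacle is step 3, the local statement that an isolated solution of multiplicity $m$ splits into at least $m$ nearby solutions (exactly $m$ points counted with multiplicity, and these are distinct for general nearby values). This needs care because $f$ is not proper globally, so nearby solutions could escape to infinity. Working locally inside a small ball avoids that problem entirely.

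I would first try to cite this directly from \cite{Sommese+Wampler}, where the multiplicity of an isolated point is defined as the number of paths of a generic homotopy converging to it. Failing that, I would use the local degree argument. Let $\mu'$ be general and close to $\mu$. On $\partial B$ the map $f-\mu$ is nonvanishing, so by Rouché-type homotopy invariance $f-\mu'$ has the same local degree $m_p$ in $B$. Each of its zeros is nonsingular because $\mu'$ is general, and each nonsingular zero contributes $+1$. Hence there are exactly $m_p$ distinct points in $B$.
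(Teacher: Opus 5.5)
Your proposal is correct and follows the paper's proof. The paper likewise cites the conservation-of-number theorem of Sommese--Wampler (their Theorem 7.1.6) to obtain $|f^{-1}(\lambda)| = \sum_i i\,N_i(\lambda) \geq \sum_i i\,N_i(\mu)$, where $N_i(\cdot)$ is the number of isolated solutions of multiplicity $i$, and then uses that every multiplicity is at least one and is at least two precisely on $S'_\mu$; your local-degree argument in steps 3--4 is a self-contained proof of that cited inequality, and one point needs tightening: in steps 1 and 5 the Zariski open set must be chosen so that $|f^{-1}(\lambda)|$ is constant on it (such a set exists), because finite, non-singular fibres alone do not force constant cardinality for a non-proper map such as $(x,y)\mapsto(x,xy)$.
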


\begin{proof}
Write $N_i(\mu)$ for the isolated points of $f^{-1}(\mu)$ of multiplicity $i$.
As $f$ is dominant, it follows from \Cref{lem:dominant} that $f^{-1}(\lambda)$ is zero dimensional for general $\lambda \in \CC^n$, and that each point of $f^{-1}(\lambda)$ is non-singular.
Hence $|f^{-1}(\lambda)| = \sum_{i \geq 1} i\cdot N_i(\lambda) = N_1(\lambda)$.
By~\cite[Theorem 7.1.6]{Sommese+Wampler}, we have 
\begin{equation*}
|f^{-1}(\lambda)| = \sum_{i \geq 1} i\cdot N_i(\lambda)
 \geq \sum_{i \geq 1} i\cdot N_i(\mu) 
 \geq \sum_{i \geq 1} N_i(\mu) + \sum_{i \geq 2} N_i(\mu) 
 = |S_\mu| + |S'_\mu| \, . \qedhere
\end{equation*}
\end{proof}

For a realisation $p \in X_{G,d}$,
we define
\begin{align*}
    \rig (G,p) &:= \left\{ q \in X_{G,d} : f_{G,d}(p) = f_{G,d}(q), \text{ and } (G,q) \text{ is rigid} \right\}, \\
    \fig (G,p) &:= \left\{ q \in \rig (G,p) : (G,q) \text{ is not infinitesimally rigid}, \{q(v_i)\}_{i=1}^d \text{ are affinely independent} \right\}.
\end{align*}
Note that $\fig(G,p)$ is the set of rigid realisations equivalent to $p$ that have infinitesimal flexes.

\begin{lemma}\label{lem:rigsing}
    Let $G$ be minimally $d$-rigid, and take $p \in X_{G,d}$.
    Then
    \begin{equation*}
        2^d c_d(G) \geq |\rig (G,p)| + |\fig (G,p)|.
    \end{equation*}
\end{lemma}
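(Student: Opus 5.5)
We reduce to \Cref{lem:dominantmapfiber}. Since $G$ is minimally $d$-rigid, $|E| = d|V| - \binom{d+1}{2} = \dim X_{G,d}$. Hence the pinned rigidity map can be viewed as a polynomial map $F := \tilde f_{G,d} \colon X_{G,d} \cong \CC^n \to \CC^E \cong \CC^n$ with $n = |E|$. By \Cref{l:xgd} the map $F$ is dominant. The image closure $\ell_d(G)$ is then a variety of dimension $n$ inside $\CC^n$, so $\ell_d(G) = \CC^E$ and $F$ is a dominant map $\CC^n \to \CC^n$. Applying \Cref{l:xgd} with a general point of $X_{G,d}$ (its image is a general point of $\CC^E$ by dominance) gives, for general $\lambda$,
\[
|F^{-1}(\lambda)| = 2^d c_d(G).
\]
Take $\mu := f_{G,d}(p)$. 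By \Cref{lem:dominantmapfiber},
\[
2^d c_d(G) \geq |S_\mu| + |S'_\mu|.
\]
It therefore suffices to prove two inclusions: $\rig(G,p) \subseteq S_\mu$ and $\fig(G,p) \subseteq S'_\mu$.

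For $\rig(G,p) \subseteq S_\mu$, take $q \in \rig(G,p)$, so $q \in F^{-1}(\mu)$ and $(G,q)$ is rigid. We need $q$ to be an isolated point of the fibre. Suppose not. Then the irreducible component of $F^{-1}(\mu)$ through $q$ has positive dimension. An algebraic curve in this component through $q$ then yields (via a local analytic or Puiseux parametrisation) a nonconstant continuous path $\rho_t$ in $X_{G,d}$ with $\rho_0 = q$ and every $(G,\rho_t)$ equivalent to $(G,q)$. This path is a continuous flex, so by rigidity it must be trivial, i.e.\ $\rho_t \sim q$ for all $t$. It remains to show that the only frameworks in $X_{G,d}$ congruent to $q$ and close to $q$ are $q$ itself, which forces $\rho$ to be constant and gives the contradiction. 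Congruent frameworks differ by some $x \mapsto Ax+b$, and the pinning conditions cut the possible isometries down to a finite set (sign choices), so continuity forces $\rho_t = q$. The point needing care is frameworks in which the pinned vertices are affinely dependent, where pinning may not cut the isometries down to a finite set. The intended remedy is to note that rigidity of $(G,q)$ with $|V| \geq d+1$ should force enough affine spread; if this is not clear, the fallback is to restrict attention to the $q$ for which it holds, or to argue directly that a positive-dimensional family of pinned congruent frameworks would give a nontrivial flex modulo the pinning.

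For $\fig(G,p) \subseteq S'_\mu$, take $q \in \fig(G,p)$. The previous step shows $q \in S_\mu$. Since $q(v_1), \dots, q(v_d)$ are affinely independent and $(G,q)$ is not infinitesimally rigid, \Cref{lem:pinnedrigmatrix} gives $\rank dF(q) < d|V| - \binom{d+1}{2} = n$. Hence $q \in S'_\mu$, and combining the two inclusions finishes the proof. The main obstacle is the isolatedness step, i.e.\ passing from rigidity (no nontrivial continuous flex) to $q$ being an isolated point of the algebraic fibre. This needs both the curve-selection argument and the finiteness of isometries preserving the pinning.
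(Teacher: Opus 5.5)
Your reduction is the same as the paper's. You regard $\tilde f_{G,d}$ as a dominant map $\CC^n\to\CC^n$, apply \Cref{lem:dominantmapfiber} at $\mu=f_{G,d}(p)$, and prove $\rig(G,p)\subseteq S_\mu$ and $\fig(G,p)\subseteq S'_\mu$, the second via \Cref{lem:pinnedrigmatrix}. The paper disposes of the first inclusion by asserting that $S_\mu=\rig(G,p)$ follows from the definitions. You correctly see that isolatedness is the real content. However, the case you leave open cannot be closed, and your first suggested remedy is false: rigidity does not force the pinned vertices to be affinely independent, and when they are not, the inequality itself fails.

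Take $d=2$ and $G=K_3$, so $2^dc_d(G)=4$. Pin $v_1,v_2$, and set $p(v_1)=p(v_2)=(0,0)$, $p(v_3)=(1,0)$. The fibre $\tilde f_{G,2}^{-1}(\mu)$ is the smooth conic $\{q : q(v_1)=q(v_2)=0,\ q(v_3)=(b,c),\ b^2+c^2=1\}$, so $S_\mu=\emptyset$. Yet every point $q$ of this conic is rigid, indeed globally rigid. Suppose $\rho$ is equivalent to $q$ with $\rho(v_1)=0$, and write $w=\rho(v_2)$, $x=\rho(v_3)$; then $w\cdot w=0$, $x\cdot x=1$ and $x\cdot w=0$. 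A non-zero isotropic $w\in\CC^2$ has $w^\perp=\CC w$, which would give $x\cdot x=0$, so $w=0$. Since $O(2,\CC)$ acts transitively on $\{x : x\cdot x=1\}$, $\rho$ is congruent to $q$. Thus $|\rig(G,p)|=\infty$. The same example shows that the paper's claimed equality $S_\mu=\rig(G,p)$ is wrong in general.

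Your fallback of restricting to $q$ with affinely independent pinned vertices is the right repair, but it changes the statement rather than proving it. Concretely, one should build affine independence of $q(v_1),\dots,q(v_d)$ into the definition of $\rig(G,p)$, as is already done for $\fig(G,p)$. This costs nothing downstream: in \Cref{thm:vertexsplit} and \Cref{thm:spidersplit} every realisation counted has affinely independent pinned vertices, and $\fig(G,p)\subseteq\rig(G,p)$ still holds.

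Under that hypothesis your argument is complete once you verify the finiteness claim you assert. If $q$ and $Aq+b$ both lie in $X_{G,d}$, then $b=0$ because $q(v_1)=0$. Moreover, $A$ maps $\CC^{k-1}\times\{0\}^{d-k+1}=\operatorname{span}\{q(v_2),\dots,q(v_k)\}$ into itself for each $k$. So $A$ is upper triangular and orthogonal, hence diagonal with entries $\pm1$. Therefore $q$ has at most $2^d$ congruent copies in $X_{G,d}$, your curve-selected flex must be constant, and $q$ is isolated. The rest of your proof, including $\fig(G,p)\subseteq S'_\mu$, then goes through unchanged.
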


\begin{proof}
    Choose $\lambda, \mu \in \mathbb{C}^E$ such that $\lambda$ is a general point and $f_{G,d}(p) = \mu$.
    Recall $S_\mu$ and $S_\mu'$ from \Cref{lem:dominantmapfiber}, where $f:=\tilde{f}_{G,d}$.
    It follows from the definition of $\tilde{f}_{G,d}$ that $S_\mu = \rig(G,p)$.
    For each $q \in \fig(G,p)$, \Cref{lem:pinnedrigmatrix} implies that $d\tilde{f}_{G,d}(q)$ is not full rank, and hence $q \in S_\mu'$.
    It follows from \Cref{lem:dominantmapfiber} that
    \begin{equation*}
        |\rig (G,p)| + |\fig (G,p)| \leq |S_\mu| + |S'_\mu| \leq \left|\tilde{f}_{G,d}^{-1}(\lambda) \right| \, .
    \end{equation*}
    The result now follows from \Cref{l:xgd}.
\end{proof}

Recall the \emph{neighbourhood} of a vertex $v \in V(G)$ is the set of adjacent vertices $\{u \colon uv \in E(G)\}$.
A \emph{$d$-dimensional vertex split} takes as input a graph $G$, a vertex $x \in V(G)$ and a partition of its neighbourhood into $N_1$, $N_2$ and $\{w_1, \dots, w_{d-1}\}$.
The output is the graph $G'$ constructed from $G - x$ by adding two new vertices $x_1, x_2$ joined to $N_1, N_2$ respectively, along with the edges $\{x_iw_j \colon  i =1,2 \, , \, 1 \leq j \leq d-1\}$ and the edge $x_1x_2$.
A schematic of a $3$-dimensional vertex split is given in \Cref{fig:vsplit}.
Vertex splitting preserves rigidity; see \cite[Corollary 11]{Wsplit}.

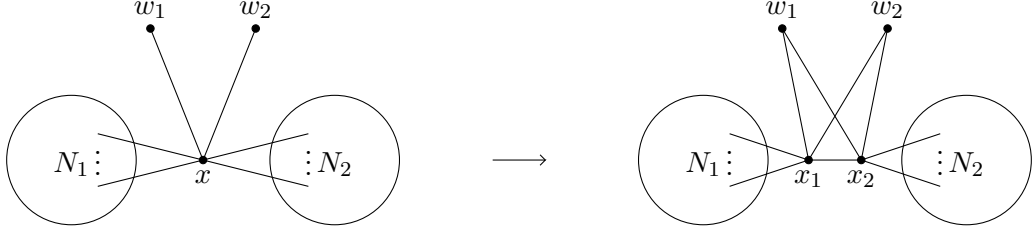
\begin{figure}[t]
\begin{center}
\begin{tikzpicture}[scale=0.7]
\draw (-3,-5) circle (35pt);
\draw (4,-5) circle (35pt);
\draw (9,-5) circle (35pt);
\draw (-8,-5) circle (35pt);

\draw (-3,-5.5) circle (0pt) node[anchor=south]{$N_2$};
\draw (4,-5.5) circle (0pt) node[anchor=south]{$N_1$};
\draw (9,-5.5) circle (0pt) node[anchor=south]{$N_2$};
\draw (-8,-5.5) circle (0pt) node[anchor=south]{$N_1$};

\filldraw (-5.5,-5) circle (2pt) node[anchor=north]{$x$};
\filldraw (6,-5) circle (2pt) node[anchor=north]{$x_1$};
\filldraw (7,-5) circle (2pt) node[anchor=north]{$x_2$};

\filldraw (-6.5,-2.5) circle (2pt) node[anchor=south]{$w_1$};
\filldraw (-4.5,-2.5) circle (2pt) node[anchor=south]{$w_2$};

\filldraw (5.5,-2.5) circle (2pt) node[anchor=south]{$w_1$};
\filldraw (7.5,-2.5) circle (2pt) node[anchor=south]{$w_2$};

\node at (4.5,-4.9){$\vdots$};
\node at (8.5,-4.9){$\vdots$};

\node at (-3.5,-4.9){$\vdots$};
\node at (-7.5,-4.9){$\vdots$};

\draw[black]
(-5.5,-5) -- (-6.5,-2.5);

\draw[black]
(-5.5,-5) -- (-4.5,-2.5);

\draw[black]
(-5.5,-5) -- (-3.5,-4.5);

\draw[black]
(-5.5,-5) -- (-3.5,-5.5);

\draw[black]
(-5.5,-5) -- (-7.5,-5.5);

\draw[black]
(-5.5,-5) -- (-7.5,-4.5);

\draw[black]
(6,-5) -- (7,-5);

\draw[black]
(6,-5) -- (5.5,-2.5);

\draw[black]
(6,-5) -- (7.5,-2.5);

\draw[black]
(5.5,-2.5) -- (7,-5);

\draw[black]
(7.5,-2.5) -- (7,-5);

\draw[black]
(4.5,-4.5) -- (6,-5);

\draw[black]
(4.5,-5.5) -- (6,-5);

\draw[black]
(7,-5) -- (8.5,-4.5);

\draw[black]
(7,-5) -- (8.5,-5.5);

\draw[black]
(0,-5) -- (1,-5) -- (0.9,-5.1);

\draw[black]
(1,-5) -- (0.9,-4.9);

\end{tikzpicture}
\caption{A schematic of the 3-dimensional vertex split.} \label{fig:vsplit}
\end{center}
\end{figure}

\begin{lemma}\label{lem:isolatedvertexsplit}
    Let $(G,p)$ be a generic minimally $d$-rigid framework and let $G'=(V',E')$ be formed from $G$ by a $d$-dimensional vertex split.
    Let $p'$ be the realisation of $G'$ where $p'(v) = p(v)$ for all $v \in V - x$ and $p'(x_1) = p'(x_2) = p(x)$.
    Then $(G',p')$ is rigid but not infinitesimally rigid.
\end{lemma}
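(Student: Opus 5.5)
We prove the two claims separately. Non-infinitesimal rigidity is a counting argument. Rigidity is a local argument: any nearby equivalent realisation must keep $x_1$ and $x_2$ together, so it reduces to a nearby equivalent realisation of $(G,p)$.

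\emph{Not infinitesimally rigid.} Since $G$ is minimally $d$-rigid, $|E| = d|V|-\binom{d+1}{2}$. The split creates one new vertex and the edge count satisfies $|E'| = |E| + (d-1) + 1 = |E|+d$, so $|E'| = d|V'|-\binom{d+1}{2}$. Thus infinitesimal rigidity of $(G',p')$ would force the rows of $R(G',p')$ to be linearly independent. But the edges $x_1w_1,\dots,x_1w_{d-1}$ and $x_2w_1,\dots,x_2w_{d-1}$ are pairwise ``parallel'' at $p'$, because $p'(x_1)=p'(x_2)$. Concretely, the sum of the rows for $x_1w_j$ and $x_2w_j$ equals the row of $xw_j$ in $R(G,p)$, spread across the $x_1,x_2$ columns. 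A simpler witness is the row of $x_1x_2$, which is identically zero since $p'(x_1)-p'(x_2)=0$. So $R(G',p')$ has a zero row, its rank is less than $|E'|$, and $(G',p')$ is not infinitesimally rigid.

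\emph{Rigid.} Take $q'$ equivalent to $p'$ and close to it (or a continuous flex $\rho_t$ with $\rho_0 = p'$). The edge $x_1x_2$ has squared length $0$, so $\|q'(x_1)-q'(x_2)\|^2=0$. Over $\mathbb{C}$ this does not force $q'(x_1)=q'(x_2)$, since isotropic vectors exist; this is the main obstacle. Over $\mathbb{R}$ it is immediate, and I would argue as follows. We have $q'(x_1)=q'(x_2)=:z$. Define $q$ on $V$ by $q(x)=z$ and $q(v) = q'(v)$ otherwise. Then $q$ is equivalent to $p$: the edges from $x$ to $N_1$ and $N_2$ are preserved, and so are the edges to the $w_j$. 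Since $(G,p)$ is generic and rigid, hence locally rigid, $q$ is congruent to $p$ for $q$ near $p$, so $q'$ is congruent to $p'$.

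For the complex case, I would handle isotropic displacements as follows. Let $\rho_t$ be a flex and set $u_t=\rho_t(x_1)-\rho_t(x_2)$, so $\|u_t\|^2 = 0$. Both $\rho_t(x_1)$ and $\rho_t(x_2)$ satisfy $\|y-\rho_t(w_j)\|^2 = \|p(x)-p(w_j)\|^2$ for $j\le d-1$. Subtracting these equations gives $u_t\cdot(\rho_t(x_1)+\rho_t(x_2)-2\rho_t(w_j))=0$. Now define $q_t$ on $V$ with $q_t(x)=\rho_t(x_1)$. Then $q_t$ satisfies the edges of $G$ incident to $N_1$ and to all the $w_j$, but the $N_2$ edges hold only at $\rho_t(x_2)$. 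Alternatively, I would avoid explicit computation altogether. By \Cref{lem:dominant}, the component of $\tilde f_{G',d}^{-1}(f(p'))$ through $p'$ has dimension at least $0$. A positive-dimensional component would project, via $q'\mapsto(q'(x_1),q'(x_2))$ together with the restriction to $V-x$, to a family of realisations of the graph $G$ with $x$ doubled. I would show this family must lie in the locus $q'(x_1)=q'(x_2)$. Restricting to that locus gives a curve of realisations equivalent to $p$ in $X_{G,d}$, which contradicts the finiteness of the fibre of $\tilde f_{G,d}$ at the generic point $p$ (\Cref{l:xgd}). I expect this isotropic/complex subtlety to be the crux. If the paper only needs rigidity in the sense relevant to $\rig(G',p')$, which consists of isolated points of the pinned fibre, I would phrase the whole argument as showing that $p'$ is an isolated point of that fibre, using the dimension count above.
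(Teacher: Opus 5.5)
Your treatment of non-infinitesimal rigidity is correct: the zero row for $x_1x_2$ together with $|E'| = d|V'|-\binom{d+1}{2}$ settles it. Your real case of rigidity is also correct. However, the lemma is needed for complex frameworks: in the proof of \Cref{thm:vertexsplit} the realisations $q'$ are complex. In the complex case your proposal has a genuine gap. You correctly identify isotropic displacements as the crux, but you never rule them out.

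The sentence ``I would show this family must lie in the locus $q'(x_1)=q'(x_2)$'' is exactly the content of the lemma. The dimension count before it (a component of dimension at least $0$) carries no information. So the final ``isolated point'' rephrasing rests on nothing. The alternative of setting $q_t(x)=\rho_t(x_1)$ is, as you note yourself, a dead end because the $N_2$ edges fail.

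The missing step is short, and your subtraction identity already contains it.
\begin{itemize}
\item Since $\rho_t(x_1)+\rho_t(x_2)-2\rho_t(w_j) = 2(\rho_t(x_1)-\rho_t(w_j)) - u_t$ and $\|u_t\|^2=0$, you get $u_t\cdot(\rho_t(x_1)-\rho_t(w_j))=0$ for $j=1,\dots,d-1$.
\item At $t=0$ the vectors $p(x)-p(w_j)$ are linearly independent. By genericity, their orthogonal complement (with respect to the nondegenerate form $x\cdot y$) is spanned by a non-isotropic vector.
\item Both conditions are open. So for small $t$, the orthogonal complement of $\{\rho_t(x_1)-\rho_t(w_j)\}_j$ is a line spanned by some $z_t$ with $\|z_t\|^2=1$.
\item Hence $u_t=\lambda z_t$, and $0=\|u_t\|^2=\lambda^2$, so $u_t=\mathbf{0}$.
\end{itemize}
In words: the only direction in which $x_2$ could separate from $x_1$ is non-isotropic, and that is incompatible with the zero-length edge $x_1x_2$.

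After that, your real-case argument applies unchanged. The flex $\rho_t$ restricted to $V$, with $x$ placed at $\rho_t(x_1)=\rho_t(x_2)$, is a flex of the rigid framework $(G,p)$, hence trivial. This is precisely the paper's proof, which first translates so that $q'(x_1)=\mathbf{0}$ and then argues with $q'(x_2)\cdot q'(w_i)=0$ and $\|q'(x_2)\|^2=0$.
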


\begin{proof}
    We first observe that $(G',p')$ has an edge of length zero, namely $x_1x_2$, and so cannot be infinitesimally rigid.
    To prove rigidity, we observe the following.
    By identifying $x_1$ with $x$, we can assume that $V' = V \cup \{x_2\}$.
    If $(G',q')$ is equivalent to $(G',p')$ with $q'(x_1) = q'(x_2)$,
    then the framework $(G,q)$ with realisation $q = q'|_{V}$ is equivalent to $(G,p)$.
    As we want to prove that $(G',p')$ is rigid,
    we only need to show that for sufficiently close realisations,
    the vertices $x_1,x_2$ lie at the same point.
    We measure the concept of `sufficiently close' using the complex Euclidean norm metric on $(\mathbb{C}^d)^V$,
    here denoted by $m( \, \cdot \, , \cdot \, )$.

    Choose any framework $(G',q')$ equivalent to $(G',p')$.
    By applying translation,
    we may suppose that both $p'(x_1) = \mathbf{0}$ and $q'(x_1) = \mathbf{0}$.
    With this, we have
    \begin{equation}\label{eq0:isolatedvertexsplit}
        \|q'(x_2)\|^2 = \|q'(x_2) - q'(x_1)\|^2 = \|p'(x_2) - p'(x_1)\|^2 = 0.
    \end{equation}
    Now choose $1 \leq i \leq d-1$.
    Since $(G',q')$ is equivalent to $(G',p')$,
    we have
    \begin{equation}\label{eq1:isolatedvertexsplit}
        \|q'(x_2) - q'(w_i)\|^2 = \|p'(x_2) - p'(w_i)\|^2 = \|p'(x_1) - p'(w_i)\|^2 = \|q'(x_1) - q'(w_i)\|^2 = \| q'(w_i)\|^2.
    \end{equation}
    Next,
    we note the following reformulation holds:
    \begin{equation}\label{eq2:isolatedvertexsplit}
        \|q'(x_2) - q'(w_i)\|^2 = \|q'(x_2)\|^2 - 2 q'(x_2) \cdot q'(w_i) + \|q'(w_i)\|^2 = \|q'(w_i)\|^2 - 2 q'(x_2) \cdot q'(w_i).
    \end{equation}
    By combining \eqref{eq1:isolatedvertexsplit} and \eqref{eq2:isolatedvertexsplit},
    we see that
    \begin{equation*}
        \|q'(w_i)\|^2 - 2 q'(x_2) \cdot q'(w_i) = \| q'(w_i)\|^2 \quad \implies \quad q'(x_2) \cdot q'(w_i) = 0.
    \end{equation*}
    As $p'$ is generic, the points $\{p'(w_i)\}_{i=1}^{d-1}$ are contained in a unique linear hyperplane.
    Hence, there exists $\varepsilon > 0$ such that for any $q'$ satisfying $m(p',q') < \varepsilon$, there exists a unique non-zero vector $z \in \mathbb{C}^d$ (up to scaling by $\pm 1$) where $z \cdot q'(w_i) = 0$ for each $1 \leq i \leq d-1$,
    and $\|z\|^2 = 1$.
    In such a situation,
    we then have that $q'(x_2) = \lambda z$ for some $\lambda \in \mathbb{C} \setminus \{0\}$.
    When we substitute this into \eqref{eq0:isolatedvertexsplit},
    we see that $\lambda = 0$.
    Hence, $q'(x_2) = \mathbf{0} = q'(x_1)$ if $m(p',q') < \varepsilon$.
    This now concludes the proof.
\end{proof}

\begin{theorem}\label{thm:vertexsplit}
    Let $G=(V,E)$ be minimally $d$-rigid with at least $d+1$ vertices and let $G'=(V',E')$ be formed from $G$ by a $d$-dimensional vertex split.
    Then $c_d(G') \geq 2 c_d(G)$.
\end{theorem}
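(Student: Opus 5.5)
We will apply \Cref{lem:rigsing} to $G'$, which is minimally $d$-rigid because vertex splitting preserves minimal rigidity ($G'$ has one more vertex and exactly $d$ more edges than $G$, and is $d$-rigid by \cite[Corollary 11]{Wsplit}). Fix a generic realisation $p$ of $G$, and choose the pinned vertices $v_1,\dots,v_d$ among $V-x$; since $|V|\ge d+1$ we can do this, and by applying an isometry we may assume $p \in X_{G,d}$ (quasi-generic). Define $p'$ as in \Cref{lem:isolatedvertexsplit}, so $p' \in X_{G',d}$. The aim is to show
\[
|\rig(G',p')| \geq 2^d c_d(G) \quad\text{and}\quad |\fig(G',p')| \geq 2^d c_d(G).
\]
\Cref{lem:rigsing} then gives $2^d c_d(G') \ge 2^{d+1} c_d(G)$, which is the claim.

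To produce these points, take each $q \in \tilde f_{G,d}^{-1}(f_{G,d}(p))$. By \Cref{l:xgd} and \Cref{thm:generic-implies-general} there are exactly $2^d c_d(G)$ such $q$; this needs the fibre count to hold at the quasi-generic point $p$ itself, which follows from the genericity statements combined with the pinning. For each such $q$, define $q'$ on $V'$ by $q'(x_1)=q'(x_2)=q(x)$ and $q'(v)=q(v)$ otherwise. Then $f_{G',d}(q') = f_{G',d}(p')$, because edges incident to $x_1$ or $x_2$ become edges to $x$, and $x_1x_2$ has length $0$. The map $q \mapsto q'$ is injective. By \Cref{lem:rigid+generic+implies+quasi}, each $q$ is quasi-generic, so we can rerun the argument of \Cref{lem:isolatedvertexsplit} at $q$: it only uses that $q$ restricts to a rigid realisation of $G$ and that $q(w_1),\dots,q(w_{d-1})$ span a hyperplane through $q(x)$ in general position, both of which are congruence-invariant. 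Hence each $(G',q')$ is rigid but not infinitesimally rigid. The pinned vertices $q(v_1),\dots,q(v_d)$ are affinely independent by quasi-genericity. So every $q'$ lies in $\fig(G',p') \subseteq \rig(G',p')$.

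This gives only $|\rig| + |\fig| \ge 2\cdot 2^d c_d(G)$ if all the points counted in $\rig$ are the $q'$ themselves, and that is exactly what we need: $\fig \subseteq \rig$ and both contain these $2^d c_d(G)$ distinct points. So no further realisations have to be found.

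The main obstacle is the rigidity transfer: \Cref{lem:isolatedvertexsplit} is stated for generic $p$, and we need it for every equivalent $q$. I would handle this by noting that its proof needs two things. The first is rigidity of $(G,q)$ near $q$, which holds because $q$ is quasi-generic and hence rigid. The second is that $\{q(w_i)-q(x)\}$ spans a nondegenerate hyperplane, meaning its normal $z$ has $\|z\|^2 \neq 0$; this also holds by quasi-genericity. A secondary point to check is that $p$ can be chosen generic among pinned frameworks so that the fibre over $f_{G,d}(p)$ has exactly $2^d c_d(G)$ points. For this, \Cref{l:xgd} together with \Cref{thm:generic-implies-general} suffices, since the $2^d$ reflections act freely on quasi-generic pinned frameworks.
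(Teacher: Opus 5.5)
Your proposal is correct and follows the paper's proof essentially verbatim. You lift each of the $2^d c_d(G)$ pinned realisations $q$ equivalent to $p$ to a realisation $q'$ in which $x_1$ and $x_2$ coincide, use \Cref{lem:isolatedvertexsplit} to place each $q'$ in $\fig(G',p')\subseteq\rig(G',p')$, and conclude with $2^d c_d(G') \geq |\rig(G',p')|+|\fig(G',p')| \geq 2|\fig(G',p')|$ from \Cref{lem:rigsing}. You also check two points the paper leaves implicit, and handle both correctly: that $G'$ is minimally $d$-rigid, so \Cref{lem:rigsing} applies, and that the generic-case lemma and the $2^d$ fibre count carry over to quasi-generic pinned frameworks.
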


\begin{proof}
    Fix vertices $v_1,\ldots,v_d \in V$ as our pinned vertices to define the spaces $X_{G,d},X_{G',d}$ and the maps $\tilde{f}_{G,d},\tilde{f}_{G',d}$.
    Now choose a quasi-generic realisation $p \in X_{G,d}$.
    For each $q \in \tilde{f}_{G,d}^{-1}(\tilde{f}_{G,d}(p))$,
    let $(G',q')$ be the framework defined by $q'(v) = q(v)$ for all $v \in V$ and $q'(x_2) = q(x_1)$.
    As each $q \in \tilde{f}_{G,d}^{-1}(\tilde{f}_{G,d}(p))$ is quasi-generic (\Cref{lem:rigid+generic+implies+quasi}),
    each resulting $q'$ has the property that $\{q'(v_1),\ldots,q'(v_d)\}$ are affinely independent.
    It now follows from \Cref{lem:isolatedvertexsplit} that for each $q \in \tilde{f}_{G,d}^{-1}(\tilde{f}_{G,d}(p))$,
    the realisation $q'$ is contained  in $\fig (G',q')$.
    By \Cref{lem:rigsing} and \Cref{l:xgd},
    we have
    \begin{equation*}
         2^d c_d(G') \geq |\rig(G',p')| + |\fig (G',p') | \geq 2|\fig (G',p') | \geq 2 \left| \tilde{f}_{G,d}^{-1}(\tilde{f}_{G,d}(p)) \right| = 2 \cdot 2^d c_d(G)
    \end{equation*}
    which implies the desired inequality.
\end{proof}

A \emph{$d$-dimensional spider-split} takes as input a graph $G$, a vertex $x \in V(G)$ and a partition of its neighbourhood into $N_1$, $N_2$ and $\{w_1, \dots, w_{d}\}$.
The output is the graph $G'$ constructed from $G - x$ by adding two new vertices $x_1, x_2$ joined to $N_1, N_2$ respectively, along with the edges $\{x_iw_j \colon  i =1,2 \, , \, 1 \leq j \leq d\}$.
A schematic of a $3$-dimensional vertex split is given in \Cref{fig:ssplit}.
Spider-splitting also preserves rigidity: it was known to follow using a simplified version of the proof for vertex splitting in \cite{Wsplit}.
In particular, it preserves infinitesimal rigidity even when in special position.

\begin{figure}[t]
\begin{center}
\begin{tikzpicture}[scale=0.7]
\draw (-3,-5) circle (35pt);
\draw (4,-5) circle (35pt);
\draw (9,-5) circle (35pt);
\draw (-8,-5) circle (35pt);

\draw (-3,-5.5) circle (0pt) node[anchor=south]{$N_2$};
\draw (4,-5.5) circle (0pt) node[anchor=south]{$N_1$};
\draw (9,-5.5) circle (0pt) node[anchor=south]{$N_2$};
\draw (-8,-5.5) circle (0pt) node[anchor=south]{$N_1$};

\filldraw (-5.5,-5) circle (2pt) node[anchor=north]{$x$};
\filldraw (6,-5) circle (2pt) node[anchor=north]{$x_1$};
\filldraw (7,-5) circle (2pt) node[anchor=north]{$x_2$};

\filldraw (-6.5,-2.5) circle (2pt) node[anchor=south]{$w_1$};
\filldraw (-4.5,-2.5) circle (2pt) node[anchor=south]{$w_3$};
\filldraw (-5.5,-2.5) circle (2pt) node[anchor=south]{$w_2$};

\filldraw (5.5,-2.5) circle (2pt) node[anchor=south]{$w_1$};
\filldraw (7.5,-2.5) circle (2pt) node[anchor=south]{$w_3$};
\filldraw (6.5,-2.5) circle (2pt) node[anchor=south]{$w_2$};

\node at (4.5,-4.9){$\vdots$};
\node at (8.5,-4.9){$\vdots$};

\node at (-3.5,-4.9){$\vdots$};
\node at (-7.5,-4.9){$\vdots$};

\draw[black]
(-5.5,-5) -- (-6.5,-2.5);

\draw[black]
(-5.5,-5) -- (-4.5,-2.5);

\draw[black]
(-5.5,-5) -- (-5.5,-2.5);

\draw[black]
(-5.5,-5) -- (-3.5,-4.5);

\draw[black]
(-5.5,-5) -- (-3.5,-5.5);

\draw[black]
(-5.5,-5) -- (-7.5,-5.5);

\draw[black]
(-5.5,-5) -- (-7.5,-4.5);

\draw[black]
(6,-5) -- (5.5,-2.5);

\draw[black]
(6,-5) -- (7.5,-2.5);

\draw[black]
(5.5,-2.5) -- (7,-5);

\draw[black]
(7.5,-2.5) -- (7,-5);

\draw[black]
(4.5,-4.5) -- (6,-5);

\draw[black]
(6.5,-2.5) -- (7,-5);

\draw[black]
(6.5,-2.5) -- (6,-5);

\draw[black]
(4.5,-5.5) -- (6,-5);

\draw[black]
(7,-5) -- (8.5,-4.5);

\draw[black]
(7,-5) -- (8.5,-5.5);

\draw[black]
(0,-5) -- (1,-5) -- (0.9,-5.1);

\draw[black]
(1,-5) -- (0.9,-4.9);

\end{tikzpicture}
\caption{A schematic of the 3-dimensional spider split.} \label{fig:ssplit}
\end{center}
\end{figure}
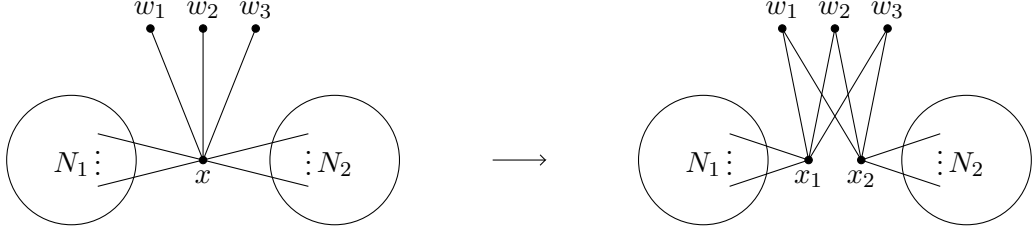

\begin{lemma}\label{lem:folklorespidersplit}
    Let $(G,p)$ be an infinitesimally rigid framework in $\mathbb{C}^d$ and let $G'=(V',E')$ be formed from $G$ by a $d$-dimensional spider-split.
    Let $p'$ to be the realisation of $G'$ where $p'(v) = p(v)$ for all $v \in V-x$ and $p'(x_1) = p'(x_2) = p(x)$.
    Then $(G',p')$ is also infinitesimally rigid.
\end{lemma}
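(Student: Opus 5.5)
We argue directly with infinitesimal flexes: we take an arbitrary $u' \in \ker R(G',p')$ and show that it is trivial. The point is that the $2d$ edges $x_iw_j$ pin both copies of $x$ to the same velocity, so $u'$ is really a flex of $(G,p)$ in disguise.

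\emph{Reducing to $d$ affinely independent neighbours.} If $G$ is a simplex the statement should be checked separately. In that case $x$ has degree at most $d$, so $N_1=N_2=\emptyset$, and $G'$ is two simplices glued along $\{w_1,\dots,w_d\}$ with $x_1,x_2$ placed at the same point. That configuration is infinitesimally rigid by a direct check. Otherwise we use $\rank R(G,p)=d|V|-\binom{d+1}{2}$. The first thing to show is that $p(w_1),\dots,p(w_d),p(x)$ are affinely independent, or at least that the vectors $p(x)-p(w_j)$, $j=1,\dots,d$, are linearly independent. Suppose they were dependent. Then there is a nonzero $z$ orthogonal, with respect to the bilinear form, to every $p(x)-p(w_j)$. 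Assigning the velocity $z$ to $x$ and $0$ to every other vertex satisfies every constraint except those on edges from $x$ to $N_1\cup N_2$. This alone gives no contradiction, because the neighbourhood of $x$ is larger than $\{w_1,\dots,w_d\}$, so we do not pursue it. Instead we work with the vectors $p(x)-p(w_j)$ directly: the goal is to show they are linearly independent whenever $(G',p')$ is to be infinitesimally rigid. If this fails for the given $p$, the conclusion may simply be false. So at this step we would either extract linear independence from the hypotheses, or add it as a genericity condition. The paper applies the lemma only to quasi-generic realisations, where it holds. We flag this as the main subtle point.

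\emph{Main step.} Let $u'$ be an infinitesimal flex of $(G',p')$. Because $p'(x_1)=p'(x_2)=p(x)$, the edges $x_1w_j$ and $x_2w_j$ impose
\[
\bigl(p(x)-p(w_j)\bigr)\cdot\bigl(u'(x_i)-u'(w_j)\bigr)=0,\qquad i=1,2,
\]
and hence $\bigl(p(x)-p(w_j)\bigr)\cdot\bigl(u'(x_1)-u'(x_2)\bigr)=0$ for every $j\in\{1,\dots,d\}$. Since the $d$ vectors $p(x)-p(w_j)$ are linearly independent, the bilinear form is nondegenerate, so $u'(x_1)=u'(x_2)$. Now define $u$ on $V$ by $u(v)=u'(v)$ for $v\neq x$ and $u(x)=u'(x_1)$. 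Each edge $xy$ of $G$ corresponds to an edge $x_iy$ of $G'$ with the same positions and the same velocities, so $u\in\ker R(G,p)$. By infinitesimal rigidity, $u$ is trivial, that is, $u(v)=Ap(v)+b$ with $A$ skew-symmetric; this uses the fact, as in the proof of \Cref{lem:pinnedrigmatrix}, that the kernel of an infinitesimally rigid framework is $\mathcal{T}(p)$. Then $u'(v)=Ap'(v)+b$ for every $v\in V'$, so $u'$ is trivial.

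\emph{Dimension count.} The kernel of $R(G',p')$ is therefore $\mathcal{T}(p')$. This space has dimension at most $\binom{d+1}{2}$, with equality because $p'(V')\supseteq p(V)$ affinely spans $\CC^d$ (the non-simplex case). Hence $\rank R(G',p')=d|V'|-\binom{d+1}{2}$, which is exactly infinitesimal rigidity of $(G',p')$. The only delicate ingredient is the linear independence of $p(x)-p(w_j)$, $j=1,\dots,d$, used in the main step. We would justify it by the quasi-genericity of the realisations to which the lemma is applied, making that condition explicit if needed.
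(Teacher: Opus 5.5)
Your flex argument is correct once $p(x)-p(w_1),\dots,p(x)-p(w_d)$ are assumed linearly independent. You are also right that this hypothesis cannot be extracted from the statement: without it the statement is false.

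\emph{Counterexample.} Take $d=2$. Let $G$ have vertices $x,a,w_1,w_2,b$, with edges $xa$ plus all edges from $\{x,a\}$ to $\{w_1,w_2,b\}$. Place $p(x),p(w_1),p(w_2),p(b)$ at distinct points of a line $L$, with $p(a)\notin L$. Then $(G,p)$ is even minimally infinitesimally rigid. Now spider-split $x$ with $N_1=\{a\}$ and $N_2=\{b\}$. Every neighbour of $x_2$ lies on $L\ni p'(x_2)$, so moving $x_2$ orthogonally to $L$ is a non-trivial infinitesimal flex.

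\emph{General mechanism.} Any relation $\sum_j\alpha_j(p(x)-p(w_j))=0$ with $\alpha\neq0$ gives a non-zero stress of $(G',p')$, namely $\omega'(x_1w_j)=\alpha_j=-\omega'(x_2w_j)$. If $(G,p)$ is also independent, then $|E'|=d|V'|-\binom{d+1}{2}$, so this stress destroys infinitesimal rigidity.

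\emph{Comparison with the paper.} The paper works with the left kernel instead. It merges a stress of $(G',p')$ into a stress of $(G,p)$ via $\omega_{xw_j}=\omega'_{x_1w_j}+\omega'_{x_2w_j}$, and declares that stress zero. That step really uses that $(G,p)$ is stress-free, which infinitesimal rigidity alone does not give ($K_5$ in the plane). Moreover, what survives the merging is a stress on the edges $x_iw_j$ of exactly the form above. So the paper's last step also silently needs your affine-independence hypothesis. Both conditions do hold where the lemma is used (\Cref{thm:spidersplit,thm:spidersplitreal}), since there $G$ is minimally rigid and the realisations are quasi-generic.

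Your right-kernel route needs only the affine independence of $p(x),p(w_1),\dots,p(w_d)$. It therefore applies to any infinitesimally rigid $(G,p)$, stressed or not. The same hypothesis also supplies the affine spanning behind $\ker R(G,p)=\mathcal{T}(p)$ and $\dim\mathcal{T}(p')=\binom{d+1}{2}$. So you can drop the separate simplex case and the exploratory first paragraph, and simply state the hypothesis.

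The stress route, when both conditions hold, gives the marginally stronger conclusion that $(G',p')$ is again independent. It also parallels the left-kernel computation in the proof of \Cref{thm:vertexsplitreal}.
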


\begin{proof}
    By identifying $x_1$ with $x$, we can assume that $V' = V \cup \{x_2\}$.
    Let $\omega' \in \CC^{E'}$ be a stress of $(G',p')$, i.e. around each vertex $v \in V'$ we have $\sum_{u \in N(v)} \omega'_{vu}(p(v) - p(u)) = 0$.
    By setting $\omega_{x_1v} = \omega'_{x_1v} + \omega'_{x_2v}$ and $\omega_{uv} = \omega'_{uv}$ for all $u,v \neq x_1, x_2$, this reduces to a stress $\omega \in \CC^{E}$ of $(G,p)$.
    We can also obtain another stress $\omega'_G \in \CC^{E}$ of $(G,p)$ by restricting $\omega'$ to $G$.
    As $(G,p)$ is infinitesimally rigid, we have both $\omega = \omega'_G = 0$.
    It follows that $\omega' = 0$, hence $(G',p')$ is also infinitesimally rigid.
\end{proof}

\begin{theorem}\label{thm:spidersplit}
    Let $G=(V,E)$ be minimally $d$-rigid with at least $d+1$ vertices and let $G'=(V',E')$ be formed from $G$ by a $d$-dimensional spider-split.
    Then $c_d(G') \geq c_d(G)$.
\end{theorem}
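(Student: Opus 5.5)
We follow the strategy of \Cref{thm:vertexsplit}, but the degenerate realisations we build are now infinitesimally rigid, so there is no multiplicity doubling. The resulting bound is $c_d(G')\geq c_d(G)$ instead of $2c_d(G)$.

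Fix pinned vertices $v_1,\ldots,v_d \in V$. If $x$ is one of them, relabel: it suffices that the pinned vertices of $G'$ include $x_1$ in place of $x$. This is harmless, since $x_1$ and $x_2$ are symmetric in the construction. Then choose a quasi-generic $p \in X_{G,d}$. For each $q \in \tilde{f}_{G,d}^{-1}(\tilde{f}_{G,d}(p))$, define $q' \in X_{G',d}$ by $q'(v)=q(v)$ for $v \in V-x$ and $q'(x_1)=q'(x_2)=q(x)$. By \Cref{lem:rigid+generic+implies+quasi}, each such $q$ is quasi-generic. Hence $(G,q)$ is infinitesimally rigid, and \Cref{lem:folklorespidersplit} shows that $(G',q')$ is infinitesimally rigid, so in particular rigid. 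The spider-split adds exactly $d$ edges and one vertex, so $G'$ is minimally $d$-rigid: it is rigid and has $d|V'|-\binom{d+1}{2}$ edges.

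All the $q'$ share the same value $\mu := f_{G',d}(p')$. The lengths of edges of $G'$ not at $x_1,x_2$ agree with those of $G$. The lengths $x_iu$ for $u\in N_1\cup N_2\cup\{w_j\}$ equal the corresponding $xu$ lengths in $(G,q)$, which are the same for every $q$ in the fibre. The map $q\mapsto q'$ is injective, since $q$ is recovered by restriction. The points $q'$ are isolated in $\tilde f_{G',d}^{-1}(\mu)$: an infinitesimally rigid pinned framework has $d\tilde f_{G',d}(q')$ of full rank by \Cref{lem:pinnedrigmatrix}, the pinned vertices being affinely independent by quasi-genericity of $q$. Thus $|\rig(G',p')|\geq |\tilde{f}_{G,d}^{-1}(\tilde{f}_{G,d}(p))|$.

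Applying \Cref{lem:rigsing} to the minimally $d$-rigid graph $G'$, discarding the $\fig$ term, and using \Cref{l:xgd} for $G$ gives
\[
2^d c_d(G') \geq |\rig(G',p')| \geq \left|\tilde{f}_{G,d}^{-1}(\tilde{f}_{G,d}(p))\right| = 2^d c_d(G).
\]
This is the claimed inequality $c_d(G')\geq c_d(G)$. Note that $p'$ is not generic, but \Cref{lem:rigsing} holds for any $p'\in X_{G',d}$, which is exactly why that lemma was phrased for arbitrary points.

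The main point to check is that every $q$ in the fibre really lands in the general position required: affinely independent pinned vertices, and infinitesimal rigidity of $(G,q)$ so that \Cref{lem:folklorespidersplit} applies. We also need \Cref{l:xgd}'s count to hold at $p$. Quasi-genericity of $p$ and of every equivalent $q$, together with \Cref{thm:generic-implies-general}, handles this, possibly after perturbing $p$ within the quasi-generic set to land in the Zariski open set $U$ of \Cref{l:xgd}.
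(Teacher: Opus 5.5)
Your proposal is correct and follows essentially the same route as the paper. The paper also lifts each point of the pinned fibre of $G$ to a realisation of $G'$ with $x_1,x_2$ coincident, and uses \Cref{lem:folklorespidersplit} to place these lifts in $\rig(G',p')$. It then applies \Cref{lem:rigsing}, dropping the $\fig$ term, together with \Cref{l:xgd}. Your extra checks fill in details the paper leaves implicit: that $G'$ is minimally $d$-rigid (so \Cref{lem:rigsing} applies), that $q\mapsto q'$ is injective, and that each $q'$ is isolated via the full-rank pinned Jacobian.
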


\begin{proof}
    Fix vertices $v_1,\ldots,v_d \in V$ as our pinned vertices to define the spaces $X_{G,d},X_{G',d}$ and the maps $\tilde{f}_{G,d},\tilde{f}_{G',d}$.
    Now choose a quasi-generic realisation $p \in X_{G,d}$.
    For each $q \in \tilde{f}_{G,d}^{-1}(\tilde{f}_{G,d}(p))$,
    let $(G',q')$ be the framework defined by $q'(v) = q(v)$ for all $v \in V$ and $q'(x_2) = q(x_1)$.
    As each $q \in \tilde{f}_{G,d}^{-1}(\tilde{f}_{G,d}(p))$ is quasi-generic (\Cref{lem:rigid+generic+implies+quasi}),
    each resulting $q'$ has the property that $\{q'(v_1),\ldots,q'(v_d)\}$ are affinely independent.
    It now follows from \Cref{lem:folklorespidersplit} that for each $q \in \tilde{f}_{G,d}^{-1}(\tilde{f}_{G,d}(p))$,
    the realisation $q'$ is contained in $\rig (G',q')$.
    By \Cref{lem:rigsing} and \Cref{l:xgd},
    we have
    \begin{equation*}
         2^d c_d(G') \geq |\rig(G',p')| + |\fig (G',p') | \geq |\rig (G',p') | \geq \left| \tilde{f}_{G,d}^{-1}(\tilde{f}_{G,d}(p)) \right| = 2^d c_d(G)
    \end{equation*}
    which implies the desired inequality.
\end{proof}

\subsubsection{Lower bounds for real realisations}

We can adapt our previous methods to provide lower bounds to the number of real realisations after vertex splitting and spider-splitting.
We first require a lemma bounding the real realisation number via real frameworks in special position.

\begin{lemma}\label{lem:real+lower+bound}
    Let $G$ be minimally $d$-rigid and $v_1, \dots, v_d$ a sequence of $d$ vertices.
    Suppose $(G,p_1),\dots, (G,p_t)$  are infinitesimally rigid real frameworks that are pairwise equivalent but non-congruent, and $p_i(v_1), \dots, p_i(v_d)$ affinely independent for all $1 \leq i \leq t$.
    Then $r_d(G) \geq t$.
\end{lemma}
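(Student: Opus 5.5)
The plan is to perturb the given special frameworks to nearby generic ones using the real inverse function theorem, applied to the pinned rigidity map. Since $G$ is minimally $d$-rigid, $|E| = d|V| - \binom{d+1}{2} = \dim X_{G,d}$, so the restricted real map $\tilde f : X_{G,d}\cap(\RR^d)^V \to \RR^E$ goes between real spaces of equal dimension.

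First I will move each $p_i$ into pinned position. Since $p_i(v_1),\dots,p_i(v_d)$ are affinely independent, a real isometry (translation plus Gram--Schmidt on $p_i(v_2)-p_i(v_1),\dots,p_i(v_d)-p_i(v_1)$) sends $p_i$ to some $\hat p_i \in X_{G,d}$ with real coordinates. The pinned points $\hat p_i(v_1),\dots,\hat p_i(v_d)$ remain affinely independent. Isometries preserve infinitesimal rigidity, equivalence and non-congruence, so the $\hat p_i$ are pairwise distinct points of $X_{G,d}$ and all lie in the same fibre $\tilde f^{-1}(\mu)$. By \Cref{lem:pinnedrigmatrix}, $d\tilde f(\hat p_i)$ has rank $d|V|-\binom{d+1}{2} = |E|$, so it is a square invertible real matrix.

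Next I apply the inverse function theorem at each $\hat p_i$. This gives pairwise disjoint open neighbourhoods $W_i \ni \hat p_i$ in the real pinned space, together with open sets $O_i \ni \mu$ in $\RR^E$, such that $\tilde f|_{W_i}: W_i \to O_i$ is a diffeomorphism. Disjointness is possible because the $\hat p_i$ are distinct. By shrinking, I can also make the $W_i$ so small that no isometry fixing the pinning conditions maps one $W_i$ into another. The pinned isometries are the finite group of sign changes of coordinates, so for $i\neq j$, $\hat p_j$ is not any such reflection of $\hat p_i$ (they are non-congruent), and small neighbourhoods separate these finitely many images. Setting $O = \bigcap_i O_i$, this is a nonempty open set containing $\mu$.

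Then I pick a generic point. Since $\tilde f|_{W_1}$ is a diffeomorphism onto an open set, $W_1' := (\tilde f|_{W_1})^{-1}(O)$ is a nonempty open subset of the real pinned space. I choose $q_1 \in W_1'$ with algebraically independent free coordinates, and apply a random real isometry (with algebraically independent parameters) to obtain a genuinely generic realisation $q$ of $G$ in $\RR^d$. For each $i$, the value $\lambda = \tilde f(q_1) \in O$ has a preimage $q_i \in W_i$. These $q_i$ are real, equivalent to $q_1$, and pairwise non-congruent: any congruence between two pinned real frameworks is a pinned isometry, and such maps were excluded by the separation of the $W_i$. Hence $r_d(G) \ge r_d(G,q) \ge t$.

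The main obstacle is the genericity step. The points of $X_{G,d}$ are never generic, so I must argue that a framework congruent to a "generic pinned" point yields a generic realisation. The alternative is to work directly with $f_{G,d}$ on all of $(\RR^d)^V$ and use a transversal slice, which ensures that the count of equivalence classes is unaffected by passing to the quasi-generic representative. A related subtlety is that algebraically independent points are dense, so they do meet the open set $W_1'$.
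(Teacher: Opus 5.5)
Your proposal is correct and follows essentially the same route as the paper: pin the frameworks by real isometries, use \Cref{lem:pinnedrigmatrix} to get an invertible real Jacobian, apply the inverse function theorem at each $p_i$, and take a point of the common image to obtain $t$ equivalent real frameworks, one of which is quasi-generic. You add two small refinements. First, you shrink the neighbourhoods so that no coordinate sign change carries one into another, which makes explicit the non-congruence of the perturbed frameworks that the paper leaves implicit in the word ``disjoint''. Second, you choose $q_1$ with algebraically independent free coordinates and then apply a real isometry whose parameters are algebraically independent over $\QQ(q_1)$; this replaces the paper's choice of a general $\lambda$ and is justified by the same transcendence-degree count as in the appendix.
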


\begin{proof}
    As $p_1, \dots, p_t \in (\RR^d)^V$, after applying isometries, we can assume that $p_1, \dots, p_t \in X_{G,d}\cap (\RR^d)^V$ where $v_1, \dots, v_d$ are the pinned vertices.
    Consider the pinned rigidity map $\tilde{f}_{G,d}$ restricted to the domain $X_{G,d} \cap (\RR^d)^V$.
    By \Cref{lem:pinnedrigmatrix}, $d\tilde{f}_{G,d}(p_i)$ is invertible for all $1 \leq i \leq t$.
    Applying the inverse function theorem (e.g., \cite[Theorem 9.24]{rudin}), there exists disjoint open sets $U_i$ around $p_i$ and $V$ around $\tilde{f}_{G,d}(p_i)$ such that $f$ maps each $U_i$ diffeomorphically onto $V$.
    As such, we can pick some general point $\lambda \in V$ and there exists unique $q_i \in U_i$ such that $\tilde{f}_{G,d}(q_i) = \lambda$.
    Moreover, as the coordinates of $\lambda$ are algebraically independent, it follows that each $q_i$ is quasi-generic.
    Therefore $r_d(G) \geq r_d(G,q_1) \geq t$.
\end{proof}

\begin{theorem}\label{thm:vertexsplitreal}
    Let $G=(V,E)$ be minimally $d$-rigid with at least $d+1$ vertices and let $G'=(V',E')$ be formed from $G$ by a $d$-dimensional vertex split.
    Then $r_d(G') \geq 2 r_d(G)$.
\end{theorem}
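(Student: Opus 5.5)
The plan is to produce $2t$ pairwise equivalent, non-congruent, infinitesimally rigid real frameworks of $G'$, where $t = r_d(G)$, each with $v_1,\dots,v_d$ affinely independent, and then apply \Cref{lem:real+lower+bound}. Pick a real generic $p$ with $r_d(G,p)=t$, and let $p_1=p,\dots,p_t$ be pairwise non-congruent real realisations equivalent to it. By \Cref{lem:rigid+generic+implies+quasi} each $p_k$ is quasi-generic, so each $(G,p_k)$ is infinitesimally rigid and the pinned vertices are affinely independent. Form $p_k'$ by placing $x_1,x_2$ both at $p_k(x)$. By \Cref{lem:isolatedvertexsplit}, $(G',p_k')$ is rigid but not infinitesimally rigid, since the edge $x_1x_2$ has length zero. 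So, unlike the spider-split, the framework $p_k'$ cannot be fed directly into \Cref{lem:real+lower+bound}; we first have to perturb.

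The perturbation will be the standard Whiteley-type one. Choose a real direction $z_k$ perpendicular to the hyperplane spanned by $p_k(w_1)-p_k(x),\dots,p_k(w_{d-1})-p_k(x)$; in the real setting this normal is unique up to sign. For small real $s\neq 0$, set $x_1 \mapsto p_k(x)$ and $x_2\mapsto p_k(x)+s z_k$, keeping every other vertex fixed. The infinitesimal rigidity argument of \cite{Wsplit} shows that $(G',p'_{k,s})$ is infinitesimally rigid for all sufficiently small $s\neq 0$. In the limit $s\to0$ the row of $x_1x_2$ divided by $s$ tends to $(z_k,-z_k)$, and this row together with the rows of the $w_i$-edges recovers the rigidity matrix of $G$ at $p_k$ augmented by one independent row. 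We then use $s$ and $-s$, which gives the two frameworks with $x_2$ on either side of $x_1$.

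These $2t$ frameworks are not equivalent to one another, since the lengths of the edges from $x_2$ to $N_2$ change with $s$. To fix this, we use the inverse function theorem on the pinned map of $G'$. All $2t$ frameworks map near the single point $\mu=f_{G',d}(p_1')$, namely the edge lengths of $G$ with $x_2$ copying $x_1$'s lengths, plus $x_1x_2$ of length $0$. Each is infinitesimally rigid and sits at distance $O(s)$ from $\mu$ in the image. The image-radius on which each local inverse exists degenerates as $s\to 0$, because the smallest singular value of the Jacobian is of order $s$. So we cannot naively pick one $\lambda$.

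The main step, and the main obstacle, is therefore the following. Fix a target $\lambda_s=f_{G',d}(p'_{1,s})$ and show that for each $k$ and each sign there is a real solution near $p'_{k,\pm s}$ with value exactly $\lambda_s$. To do this, rescale the edge $x_1x_2$ coordinate and parametrise $x_2 = x_1 + s u$. The equations then become $\|u\|^2 = \|z_1\|^2$, $u\cdot(q(x_1)-q(w_i)) = O(s)$, together with the $G$-constraints on $q|_V$ and the $N_2$-constraints, which are $O(s)$ perturbations of the $N_1$-type constraints. At $s=0$ this rescaled system decouples: $q|_V$ must be equivalent to $p$, giving the $t$ solutions $p_k$, and $u$ must be a unit normal, giving the two solutions $\pm z_k$. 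Each of these $2t$ solutions is nondegenerate, by infinitesimal rigidity of $(G,p_k)$ and by transversality of the sphere and the line. The implicit function theorem in $s$ then yields $2t$ real solutions for small $s$. These solutions are pairwise non-congruent, since they are close to the distinct $p_k$ with distinct signs of $u$. They are infinitesimally rigid by the perturbation argument above, and they keep $v_1,\dots,v_d$ affinely independent. Applying \Cref{lem:real+lower+bound} to $G'$ gives $r_d(G')\ge 2t$.
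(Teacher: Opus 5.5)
Your proposal is correct. Its skeleton matches the paper's: start from the coincident frameworks $p_k'$, open up $x_1x_2$ on both sides to get $2t$ equivalent, infinitesimally rigid real frameworks, then apply the real lower-bound lemma. The mechanism for getting \emph{exactly} equal edge lengths is genuinely different, though.

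\textbf{The paper's route.} It freezes every length of $G'-x_1x_2$ at its degenerate value.
It first shows that the only stress of $(G',q')$ is supported on $x_1x_2$. By the constant rank theorem, the real solution set $C$ of the $G'-x_1x_2$ equations is then a smooth curve through each $q'$.
Taking the level set $\|\rho(x_1)-\rho(x_2)\|^2=\delta$ on $C$ gives two points per $q'$, and these are automatically pairwise equivalent.
Infinitesimal rigidity of these frameworks needs a separate Bolzano--Weierstrass and lower-semicontinuity argument, which is essentially Whiteley's limit with the normalised direction $z$.

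\textbf{Your route.} You let all $x_2$-edge lengths move with $s$ along the explicit target $\lambda_s$. You then pass to blow-up coordinates $x_2=x_1+su$: subtract the $x_1w_i$ equations and divide by $s$, and divide the $x_1x_2$ equation by $s^2$.
This makes the degenerate configurations regular zeros at $s=0$ of a square system. Its Jacobian is block lower-triangular, with diagonal blocks the pinned rigidity matrix of $G$ at $p_k$ and the $d\times d$ matrix with rows $p_k(x)-p_k(w_i)$ and $u$.
A single application of the implicit function theorem then gives three things at once:
\begin{itemize}
\item the $2t$ branches;
\item their local uniqueness, hence non-congruence;
\item infinitesimal rigidity for small $s\neq0$, because there the rescaled system is an invertible reparametrisation of the pinned map of $G'$.
\end{itemize}

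\textbf{Comparison.} Your route avoids the paper's separate limiting claim. It also avoids the implicit step that $h$ is monotone on each branch of $C$, which is needed to get exactly one point per side at level $\delta$. The cost is writing down $\lambda_s$ explicitly and setting up the rescaled equations carefully, whereas the paper varies only the single length of $x_1x_2$.

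\textbf{Two small points.} Your preliminary straight-line frameworks $p'_{k,\pm s}$, and the appeal to Whiteley for them, are not actually needed. For the implicit-function-theorem branches themselves, justify infinitesimal rigidity by persistence of the invertible rescaled Jacobian, or by rerunning the limiting argument along the branch, rather than by citing the straight-line statement.
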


\begin{proof}
    Fix vertices $v_1,\ldots,v_d \in V \setminus \{x\}$ as our pinned vertices to define the spaces $X_{G,d},X_{G',d}$ and the maps $\tilde{f}_{G,d},\tilde{f}_{G',d}$.
    Now choose a quasi-generic real realisation $p \in X_{G,d}$.
    For each real $q \in \tilde{f}_{G,d}^{-1}(\tilde{f}_{G,d}(p))$,
    let $(G',q')$ be the real framework defined by $q'(v) = q(v)$ for all $v \in V \setminus \{x\}$ and $q'(x_1) = q'(x_2) = q(x)$.

    \begin{claim}\label{claim1:vertexsplitreal}
        The left kernel of $R(G',q')$ is $\{t \psi : t \in \mathbb{R}\}$, where $\psi \in \mathbb{R}^{E'}$ satisfies $\psi(x_1x_2)=1$ and $\psi(e) =0$ for all $e \neq x_1x_2$.
    \end{claim}

    \begin{claimproof}
        As $q'(x_1) = q'(x_2)$, it is clear that $\psi \in \ker R(G',q')^{\top}$.
        Choose any $\lambda' \in \ker R(G',q')^{\top}$ and fix $\lambda \in \mathbb{R}^E$ to be the vector where 
        \begin{equation}\label{eq:kernel+cases+vertex+split}
            \lambda (vw) :=
            \begin{cases}
                \lambda' (x_1 w_i) + \lambda' (x_2 w_i) &\text{if } v = x \text{ and } w = w_i, \text{ or vice versa},\\
                \lambda' (x_i w) &\text{if } v = x \text{ and } w \in N_i, \text{ or vice versa},\\
                \lambda' (vw) &\text{otherwise}.
            \end{cases}
        \end{equation}
        Then $\lambda \in \ker R(G,q)^{\top}$ also.
        As $(G,q)$ is rigid and $q$ is quasi-generic by \Cref{lem:rigid+generic+implies+quasi}, it is infinitesimally rigid and hence $\lambda  = \mathbf{0}$.
        This implies that $\lambda'(vw) = 0$ in the second and third cases of \eqref{eq:kernel+cases+vertex+split}, and $\lambda'(x_1 w_i) = -\lambda'(x_2w_i)$ for each $i \in [d-1]$.
        As $\lambda' \in \ker R(G',q')^{\top}$ and $q'(x_1) - q'(x_2) = 0$, the following equation holds around vertex $x_1$:
        \begin{equation*}
            \sum_{i=1}^{d-1} \lambda'(x_1 w_i) (q(x) - q(w_i)) = \sum_{i=1}^{d-1} \lambda'(x_1 w_i) (q'(x_1) - q'(w_i)) = \mathbf{0}.
        \end{equation*}
        As the points $q(x),q(w_1),\ldots,q(w_{d-1})$ are affinely independent,
        we thus have $\lambda'(x_1 w_i) = \lambda'(x_2 w_i) = 0$ for each $i \in [d-1]$.
        Hence $\lambda' = \lambda'(x_1x_2) \psi$.
    \end{claimproof}

    Now fix the sets
    \begin{align*}
        S &:= \left\{ q' \in (\mathbb{R}^d)^{V'} \cap X_{G',d} : q \in \tilde{f}_{G,d}^{-1}(\tilde{f}_{G,d}(p)) \right\},\\
        C &:= \left\{ \rho \in (\mathbb{R}^d)^{V'} \cap X_{G',d} : \tilde{f}_{G'-x_1x_2,d}(\rho) = \tilde{f}_{G'-x_1x_2,d}(p'), ~ \rank R(G'-x_1x_2,\rho)=|E'|-1\right\}.
    \end{align*}
    As a consequence of the constant rank theorem (e.g., \cite[Theorem 9.32]{rudin}), $C$ is a 1-dimensional smooth manifold.
    By \Cref{claim1:vertexsplitreal}, the set $S$ is a finite set contained in $C$.
    In fact, given the morphism
    \begin{equation*}
        h: C \rightarrow \mathbb{R}, \quad \rho \mapsto \|\rho(x_1) - \rho(x_2)\|^2,
    \end{equation*}
    the zeroes of $h$ are exactly the realisations contained in $S$.
    Since $h$ is a continuous polynomial map, it follows that there exists $\varepsilon >0$ and an injective continuous map for each $q' \in S$
    \begin{equation*}
        (-\varepsilon,\varepsilon) \rightarrow C, \quad t \mapsto q'_t
    \end{equation*}
    satisfying $q'_0 = q'$ and $h(q'_t) = |t|$ for each $t \in (-\varepsilon,\varepsilon)$.

    \begin{claim}
        For each $q' \in S$, there exists $\delta_{q'} >0$ such that $(G',q'_t)$ is minimally infinitesimally rigid whenever $0<|t|<\delta_{q'}$.
    \end{claim}

    \begin{claimproof}
        Assume the claim does not hold.
        Then for all $\delta > 0$ there exists infinitely many $0< |t| < \delta$ such that $\rank (G',q'_t) < |E'|$.
        As the set $S$ is finite and $q(w_1)-q(x), \ldots, q(w_{d-1})-q(x)$ are linearly independent, 
        there exists $\delta > 0$ such that for each $0 < t < \delta$ we have $q'_t(x_1) \neq q'_t(x_2)$ and
        \begin{equation}\label{claimvectors}
            q'_t(w_1)- \frac{1}{2}\left(q'_t(x_1) + q'_t(x_2)\right), \dots , q'_t(w_{d-1})-\frac{1}{2}\left(q'_t(x_1) + q'_t(x_2)\right)
        \end{equation}
        are linearly independent.
        As the points $q'_t(x_1)$ and $q'_t(x_2)$ are equidistant from the $d$ points $q'_t(w_1), \ldots, q'_t(w_{d-1})$ and $\frac{1}{2}\left(q'_t(x_1) + q'_t(x_2)\right)$,
        the vector $q'_t(x_1)- q'_t(x_2)$ is orthogonal to each vector in \eqref{claimvectors} when $0 < t < \delta$.
        By Bolzano-Weierstrass, there exists a decreasing sequence $(t_n)_{n \in \mathbb{N}}$ with $0 < t_n < \delta$ and $t_n \rightarrow 0$ as $n \rightarrow \infty$ such that $(G',q'_{t_n})$ is not infinitesimally rigid for all $n$ and
        \[
        \frac{q'_{t_n}(x_1)-q'_{t_n}(x_2)}{\|q'_{t_n}(x_1)-q'_{t_n}(x_2)\|} \rightarrow z \qquad \text{ as } n \rightarrow \infty \, ,
        \] 
        where  $z \in \mathbb{R}^d$ some vector with $\|z\|=1$.
        As $q'_{t_n}(x_1)- q'_{t_n}(x_2)$ is orthogonal to each vector in \eqref{claimvectors} when $t=t_n$ for all $n \in \mathbb{N}$, the vector $z$ is orthogonal to the vectors $q(w_1)-q(x), \ldots, q(w_{d-1})-q(x)$.
        
        With this, we now fix $M_t$ to be the matrix $R(G',q'_{t})$ where each row $vw$ is multiplied by $\|q'_t(v)-q'_t(w)\|^{-1}$,
        and we fix $M_0$ to be the matrix $R(G',q')$ where each row $vw \neq x_1x_2$ is multiplied by $\|q'(v)-q'(w)\|^{-1}$,
        and the row $x_1x_2$ is replaced by the row
        \begin{equation*}
            [ ~ 0 ~ \cdots ~ 0 ~ \overbrace{z^{\top}}^{x_1} ~ 0 ~ \cdots ~ 0 ~ \overbrace{-z^{\top}}^{x_2} ~  0 ~ \cdots ~ 0 ~ ].
        \end{equation*}
        By construction, $\rank M_{t_n} = \rank R(G',q'_{t_n}) \leq |E'|-1$ for each $n$ and $M_{t_n} \rightarrow M_0$ as $n \rightarrow \infty$.
        We can adapt the argument given in \Cref{claim1:vertexsplitreal} to show that any element $\psi$ in the left kernel of $M_0$ must satisfy $\psi(x_1 w_i) = - \psi(x_2 w_i)$ for each $i \in [d-1]$ and $\psi(vw) = 0$ for all other edges aside from $x_1x_2$.
        Hence, the following equation holds around $x_1$
        \begin{equation*}
            \psi(x_1x_2) z + \sum_{i=1}^{d-1} \psi(x_1 w_i) (q(x) - q(w_i)) = \mathbf{0}.
        \end{equation*}
        As $q(w_1)-q(x), \ldots, q(w_{d-1})-q(x)$ and $z$ are a basis,
        we have that $\psi = \mathbf{0}$ and hence $\rank M_0 = |E'|$.
        However, lower-semicontinuity of matrix rank implies that $\rank M_0 \leq \rank M_{t_n} \leq |E'|-1$ for all sufficiently large $n$, a contradiction.
    \end{claimproof}

    Given $0 < \delta < \min\{\delta_{q'} : q' \in S\}$, the set $\{(G',q'_\delta), (G',q'_{-\delta}) : q' \in S\}$ consists of $2|S|$ minimally infinitesimally rigid frameworks with the same edge lengths.
    Hence $r_d(G') \geq 2|S|$ by \Cref{lem:real+lower+bound}.
    The result now follows as $|S|=r_d(G)$.
\end{proof}

\begin{theorem}\label{thm:spidersplitreal}
    Let $G=(V,E)$ be minimally $d$-rigid with at least $d+1$ vertices and let $G'=(V',E')$ be formed from $G$ by a $d$-dimensional spider-split.
    Then $r_d(G') \geq r_d(G)$.
\end{theorem}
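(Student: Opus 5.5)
The plan is to combine \Cref{lem:folklorespidersplit} with \Cref{lem:real+lower+bound}, in the same way that \Cref{thm:spidersplit} combined \Cref{lem:folklorespidersplit} with \Cref{lem:rigsing}. This should be much simpler than the vertex split case, because no perturbation argument is needed: the collapsed frameworks are already infinitesimally rigid.

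We fix pinned vertices $v_1,\ldots,v_d \in V\setminus\{x\}$, which is possible since $|V|\geq d+1$. We then choose a generic real realisation $p$ of $G$ with $r_d(G,p)=r_d(G)=t$. Let $(G,p_1),\ldots,(G,p_t)$ be pairwise non-congruent real frameworks that are all equivalent to $(G,p)$, with $p_1=p$. By \Cref{lem:rigid+generic+implies+quasi} each $p_i$ is quasi-generic. Hence each $(G,p_i)$ is infinitesimally rigid (it is rigid by genericity and Asimow--Roth), and the points $p_i(v_1),\ldots,p_i(v_d)$ are affinely independent.

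For each $i$ we define the real realisation $p_i'$ of $G'$ by $p_i'(v)=p_i(v)$ for $v\in V\setminus\{x\}$ and $p_i'(x_1)=p_i'(x_2)=p_i(x)$. We then check three properties.
\begin{enumerate}
\item The frameworks $(G',p_i')$ are pairwise equivalent. Every edge of $G'$ either lies in $G-x$ or is of the form $x_jw$ with $w\in N_1\cup N_2\cup\{w_1,\ldots,w_d\}$, and the length of $x_jw$ under $p_i'$ equals the length of $xw$ under $p_i$.
\item The frameworks are pairwise non-congruent. Since $p_i(v_1),\ldots,p_i(v_d)$ together with $p_i(x)$ affinely span $\RR^d$, congruence of $p_i'$ and $p_j'$ would restrict to congruence of $p_i$ and $p_j$, which we have excluded.
\item Each $(G',p_i')$ is infinitesimally rigid, by \Cref{lem:folklorespidersplit}.
\end{enumerate}
The pinned vertices remain affinely independent because they are untouched by the construction.

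Finally, $G'$ is minimally $d$-rigid: it is rigid by \Cref{lem:folklorespidersplit}, and $|E'|=|E|+d = d|V'|-\binom{d+1}{2}$. We may therefore apply \Cref{lem:real+lower+bound} to $G'$ with the frameworks $p_1',\ldots,p_t'$, which gives $r_d(G')\geq t=r_d(G)$.

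The only step requiring care is the non-congruence in item 2. The affine-span argument handles it, since congruence of frameworks restricts to congruence of sub-frameworks on $V\setminus\{x\}\cup\{x_1\}$, which is identified with $V$.
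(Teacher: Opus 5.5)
Your proposal is correct and is essentially the paper's proof: collapse $x_1,x_2$ onto $p_i(x)$ in each of the $r_d(G)$ pairwise non-congruent real realisations, obtain infinitesimal rigidity from \Cref{lem:folklorespidersplit}, and conclude with \Cref{lem:real+lower+bound}. You also make explicit some points the paper leaves implicit: choosing $p$ with $r_d(G,p)=r_d(G)$, pinning away from $x$, and checking that $G'$ is minimally $d$-rigid so that \Cref{lem:real+lower+bound} applies to it. It is worth recording that quasi-genericity of the $p_i$ also gives affine independence of $p_i(x),p_i(w_1),\ldots,p_i(w_d)$, which is exactly what the stress argument behind \Cref{lem:folklorespidersplit} uses.
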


\begin{proof}
    Fix vertices $v_1,\ldots,v_d \in V$ as our pinned vertices to define the spaces $X_{G,d},X_{G',d}$ and the maps $\tilde{f}_{G,d},\tilde{f}_{G',d}$.
    Now choose a real quasi-generic realisation $p \in X_{G,d}$ and fix $S \subset X_{G,d} \cap (\mathbb{R}^d)^V$ to be the set of equivalent but non-congruent real realisations of $G$ in $X_{G,d}$.
    For each $q \in S$,
    let $(G',q')$ be the real framework defined by $q'(v) = q(v)$ for all $v \in V$ and $q'(x_1) = q'(x_2) = q(x)$.
    As each $q \in S$ is quasi-generic (\Cref{lem:rigid+generic+implies+quasi}),
    each resulting $q'$ has the property that $\{q'(w_1),\ldots,q'(w_d)\}$ are affinely independent.
    It now follows from \Cref{lem:folklorespidersplit} that for each $q \in S$,
    the framework $(G',q')$ is infinitesimally rigid.
    The result now follows from \Cref{lem:real+lower+bound}.
\end{proof}

\section{Applications} \label{sec:applications}

\subsection{Lower bounds for triangulated spheres}

Using our bounds on how the realisation number changes under vertex splitting, we are ready to prove the following:

\triangulatedsphere*

\begin{proof}
Steinitz \cite{steinitz} proved that the graph of a triangulated sphere can be reduced to $K_4$ using edge contractions, the inverse operation to vertex split.
As such, $G = (V,E)$ can be obtained from $K_4$ by performing $|V| - 4$ vertex splits.
As $c_3(K_4) = r_3(K_4) = 1$, it follows from \Cref{thm:vertexsplit} and \Cref{thm:vertexsplitreal} that $c_3(G) \geq 2^{|V| - 4}$ and $r_3(G) \geq 2^{|V| - 4}$.
\end{proof}

In \cite{KastisPower2023}, Kastis and Power extended Gluck's result on the minimal 3-rigidity of triangulated spheres to \emph{projective planar} graphs; i.e., any graph that has a topological embedding into the real projective plane.
Specifically, they proved that a projective planar graph is minimally 3-rigid if and only if it is \emph{$(3,6)$-tight}; i.e., $|E|=3|V|-6$ and $|E(H)|\leq 3|V(H)|-6$ for all subgraphs with at least 3 vertices.
We can adapt their constructive proof to extend \Cref{thm:triangulatedsphere} to $(3,6)$-tight projective planar graphs.

\begin{theorem}\label{thm:pp}
    Let $G = (V,E)$ be a $(3,6)$-tight projective planar graph.
    Then $c_3(G) \geq r_3(G) \geq 2^{|V| - 4}$.
\end{theorem}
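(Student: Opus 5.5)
The plan is to induct on $|V|$, mirroring the proof of \Cref{thm:triangulatedsphere}, and to use the constructive characterisation of $(3,6)$-tight projective planar graphs due to Kastis and Power \cite{KastisPower2023}. Their argument shows that every such graph either is one of a small list of irreducible base graphs, or has a reduction to a smaller $(3,6)$-tight projective planar graph. The reductions are edge contractions (inverse 3-dimensional vertex splits) on suitable triangles, plus possibly a few further move types. The first step is to extract from their proof a precise statement. Every $(3,6)$-tight projective planar graph $G$ is obtained from an irreducible graph $G_0$ by a sequence of operations. Each operation is either a 3-dimensional vertex split or some other move whose effect on $c_3$ and $r_3$ we can control, and every intermediate graph is minimally 3-rigid.

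For the inductive step, suppose $G$ is obtained from a smaller $(3,6)$-tight projective planar graph $G^-$ by a 3-dimensional vertex split. Since $G^-$ is minimally 3-rigid with at least $4$ vertices (by Kastis--Power), \Cref{thm:vertexsplit} and \Cref{thm:vertexsplitreal} give $c_3(G)\geq 2c_3(G^-)$ and $r_3(G)\geq 2r_3(G^-)$. Applying the induction hypothesis then gives $r_3(G) \geq 2\cdot 2^{|V|-5}=2^{|V|-4}$. The inequality $c_3(G)\geq r_3(G)$ is immediate from the definitions.

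If the Kastis--Power reductions include other moves that also reduce the vertex count by one, I would try to realise each as a composition of operations already treated. A 0-extension doubles both $c_3$ and $r_3$ by \Cref{lem:0ext} and \Cref{prop:0extreal}, and a 3-dimensional spider-split at least preserves them by \Cref{thm:spidersplit} and \Cref{thm:spidersplitreal}. The goal is to check that each such move contributes a factor of at least $2$ per added vertex.

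The main obstacle is the base of the induction. Kastis and Power's irreducible $(3,6)$-tight projective planar graphs are not just $K_4$; they form a finite list of small contraction-minimal graphs. For each of these I need $r_3(G_0)\geq 2^{|V(G_0)|-4}$. For each irreducible graph I would first try to show it arises from $K_4$ by 0-extensions and vertex splits performed non-planarly, which are still valid operations for \Cref{thm:vertexsplit} since those results only need minimal 3-rigidity. If that fails, I would exhibit explicit real frameworks and apply \Cref{lem:real+lower+bound}, or fall back on a direct computation of $c_3$ and $r_3$ for these finitely many small graphs.
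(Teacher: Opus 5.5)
Your route is the paper's: reduce $G$ by edge contractions (inverse $3$-dimensional vertex splits) using Kastis--Power, gain a factor of $2$ at each split from \Cref{thm:vertexsplit} and \Cref{thm:vertexsplitreal}, and check the bound on the terminal graphs. However, the proposal stops exactly where the remaining content lies. \cite[Theorem 5.1]{KastisPower2023} says that only edge contractions are needed. Your hedge about other moves is therefore unnecessary. It would not have worked anyway if such moves existed through spider-splits, since \Cref{thm:spidersplit} and \Cref{thm:spidersplitreal} only give a factor of $1$. The theorem also says that the terminal graphs are $K_4$, graphs obtained from $K_4$ by $0$-extensions, the octahedron, and the graph $H$ obtained from $K_{3,3}$ by adding a vertex adjacent to all six vertices. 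The first two types are handled by \Cref{lem:0ext} and \Cref{prop:0extreal}. The octahedron is handled by \Cref{thm:triangulatedsphere}; it is $K_4$ followed by two vertex splits. So your ``build it from $K_4$'' strategy works for these three.

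That strategy provably fails for $H$, which is the one base case carrying real content:
\begin{itemize}
\item $H$ has minimum degree $4$, so it is not a $0$-extension.
\item Every edge of $H$ has either exactly $1$ common neighbour (edges of $K_{3,3}$) or exactly $3$ (edges at the apex). The new edge $x_1x_2$ of a $3$-dimensional vertex split has exactly $2$ common neighbours, so $H$ is not a vertex split.
\item Every non-adjacent pair in $H$ has $4$ common neighbours, whereas a spider-split produces a non-adjacent pair with exactly $3$. So $H$ is not a spider-split either.
\end{itemize}
Hence none of the operations you propose to use produces $H$, and $r_3(H)\geq 2^{3}$ must come from an independent computation. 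The paper uses the Bartzos--Legersk\'y database value $c_3(H)=r_3(H)=8$ \cite{bartzosLeg}. Your fallback of a direct computation, or of exhibiting $8$ equivalent, infinitesimally rigid, non-congruent real frameworks and invoking \Cref{lem:real+lower+bound}, is the right idea. But that number is the one input the proof genuinely needs beyond the paper's operation theorems, and your proposal does not supply it. Once it is in hand, your induction goes through as written.
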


\begin{proof}
    By \cite[Theorem 5.1]{KastisPower2023},
    $G$ can be reduced to one of the following graphs by edge contractions:
    $K_4$, a graph formed from $K_4$ via 0-extensions, the 1-skeleton of the octahedron, or the graph formed from $K_{3,3}$ by adding a new vertex adjacent to all other vertices (here denoted $H$).
    The first two types of graph satisfy $c_3(G) = 2^{|V| - 4}$ and $r_3(G) = 2^{|V| - 4}$ by \Cref{lem:0ext} and \Cref{prop:0extreal}, and the 1-skeleton of the octahedron satisfies $c_3(G) \geq 2^{|V| - 4}$ and $r_3(G) \geq 2^{|V| - 4}$ by \Cref{thm:triangulatedsphere}.
    Using Bartzos and Legerský's database on 3-realisation numbers for small graphs \cite{bartzosLeg}, we see that $c_3(H) = r_3(H) = 8 = 2^{|V(H)|-4}$.
    The result now follows from \Cref{thm:vertexsplit}.
\end{proof}

\subsection{Rigid subgraph substitution}

We now show how the proof of \Cref{thm:isosubreal} can be adapted to understand how rigid subgraph substitution effects realisation numbers.

Let $G =(V,E)$ be a graph with a $d$-rigid subgraph $H =(W,F)$ on at least $d+1$ vertices.
\emph{Rigid subgraph substitution} replaces $H$ with some other $d$-rigid subgraph $H' = (W',F')$ that satisfies $W \subseteq W'$ and $W' \setminus V = W' \setminus W$.
The output is the graph $G' = (V', E')$ with
\[
V' = (V \setminus W) \cup W' \, , \quad E' = (E \setminus F) \cup F' \, .
\]
Rigid subgraph substitution preserves rigidity; see \cite[Corollary 2.3]{FinbowRossWhiteley2012}.

\begin{corollary}\label{cor:rigidsubgraphsub}
    Let $G$ and $H$ be $d$-rigid graphs with at least $d+1$ vertices such that $H$ is a subgraph of $G$. Further suppose that for some minimally $d$-rigid subgraph $\tilde{H}$ of $H$,
    we have that $G - E(H) + E(\tilde{H})$ is minimally $d$-rigid.
    Let $G'$ be obtained from $G$ via the rigid subgraph substitution of $H$ for some $d$-rigid graph $H'$. 
    Then
    \begin{equation*}
        c_d(G') = \frac{c_d(H')}{c_d(H)} c_d(G).
    \end{equation*}
\end{corollary}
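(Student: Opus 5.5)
The plan is to reuse the proof of \Cref{thm:isosubreal} verbatim for $G$, and then run the same argument for $G'$ with the same ``outside'' map. The identity to aim for is \eqref{eq:subeq}, namely $c_d(G) = c_d(H)\deg h$. Here $h$ is the projection from the pinned fibre $\widetilde{Z}_p$ of $\tilde f_{G-E(H),d}$ to $X_{H,d}$. The key observation is that $\deg h$ depends only on the graph $G - E(H)$ and the vertex set $W = V(H)$; it does not depend on $H$ itself. So if we show the analogous identity $c_d(G') = c_d(H')\deg h'$ with $\deg h' = \deg h$, the corollary follows by dividing. The division is legitimate because $c_d(H) \ge 1$ is finite, $H$ being $d$-rigid with at least $d+1$ vertices.

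First I check that $G'$ satisfies the hypotheses of \Cref{thm:isosubreal} with respect to $H'$. Pick a minimally $d$-rigid spanning subgraph $\widetilde{H'}$ of $H'$. Then $G' - E(H') + E(\widetilde{H'})$ is obtained from the minimally $d$-rigid graph $G - E(H) + E(\widetilde{H})$ by rigid subgraph substitution of $\widetilde H$ by $\widetilde{H'}$. This substitution preserves rigidity by \cite[Corollary 2.3]{FinbowRossWhiteley2012}. An edge count shows the result is minimally rigid, since
\[
|E(\widetilde{H'})| - |E(\widetilde H)| = d(|W'|-|W|).
\]
So \eqref{eq:subeq} gives $c_d(G') = c_d(H')\deg h'$, where $h'$ is the projection from the pinned fibre of $\tilde f_{G'-E(H'),d}$ onto $X_{H',d}$.

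Next I relate $h'$ to $h$. The pinned vertices $v_1,\dots,v_d$ are chosen in $W$. The graph $G' - E(H')$ is $G - E(H)$ together with the isolated vertices $W'\setminus W$; indeed no edge of $G'$ outside $F'$ touches $W'\setminus W$, because $W'\setminus V = W'\setminus W$ consists of new vertices. Hence the fibre $\widetilde Z'_p$ is $\widetilde Z_p \times (\CC^d)^{W'\setminus W}$. Restricting $p$ to $V$ keeps it a general point, so we may take the same $p$ on $V$. The projection $h'$ is then
\[
h\times \mathrm{id} : \widetilde Z_p\times(\CC^d)^{W'\setminus W}\to X_{H,d}\times(\CC^d)^{W'\setminus W} = X_{H',d},
\]
and a general fibre of $h'$ is a general fibre of $h$. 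Therefore $\deg h' = \deg h$, and dividing the two identities gives $c_d(G') = \frac{c_d(H')}{c_d(H)}c_d(G)$.

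The main obstacle is bookkeeping rather than substance. One point is the genericity of the fibre point $q$ used in \eqref{eq:subeq} when passing to the product. The other is confirming that $\deg h$ in the proof of \Cref{thm:isosubreal} is defined intrinsically, as a generic fibre cardinality of a dominant, generically finite map between equidimensional varieties, and does not depend on the choice of $q$. I would handle the genericity issue by noting that general points of a product of irreducible fibres restrict to general points of each factor.
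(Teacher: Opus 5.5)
Your proof is correct. Its core is the same as the paper's: apply \eqref{eq:subeq} to both pairs and note that $\deg h$ depends only on $G-E(H)$ and the vertex set $W$.

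The difference is how you handle the case $W'\supsetneq W$.
\begin{itemize}
\item The paper first treats $V(H')=V(H)$, where $G'-E(H')=G-E(H)$ and $h$ is literally the same map. It then reduces the general case to this one: it performs $k=|W'\setminus W|$ $d$-dimensional 0-extensions on $H$ (and hence $G$) to get $H'',G''$ with $V(H'')=W'$. By \Cref{lem:0ext} this multiplies both $c_d(H)$ and $c_d(G)$ by $2^k$, and the factors cancel.
\item You instead observe directly that the pinned fibre and $X_{H',d}$ both split off a factor $(\CC^d)^{W'\setminus W}$, with $h'=h\times\mathrm{id}$, so $\deg h'=\deg h$.
\end{itemize}
Your route is uniform and does not need the 0-extension doubling lemma, which the paper imports from elsewhere. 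The price is that you must check the product and general-point bookkeeping yourself, which the paper avoids by reusing an existing lemma.

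Two small remarks.
\begin{itemize}
\item Your separate verification that $G'-E(H')+E(\widetilde{H'})$ is minimally $d$-rigid is not needed. Generic finiteness of $h'=h\times\mathrm{id}$ follows at once from that of $h$. That, together with $d$-rigidity of $H'$, is all the derivation of \eqref{eq:subeq} actually uses.
\item You (and the paper) leave one fact implicit. The claim that $G'-E(H')$ is $G-E(H)$ plus isolated vertices needs more than "no edge outside $F'$ touches $W'\setminus W$". You also need that no edge of $E(G)\setminus E(H)$ has both ends in $W$; otherwise that edge could also lie in $F'$ and would then be deleted from $G'-E(H')$.
\end{itemize}
The missing fact does hold. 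Such an edge, together with the spanning minimally $d$-rigid $\widetilde H$ on $W$, would give more than $d|W|-\binom{d+1}{2}$ edges on $W$, hence a dependent set. That contradicts the minimal rigidity of $G-E(H)+E(\widetilde H)$. The same fact makes your edge count a disjoint-union count.
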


\begin{proof}
    First suppose that $V(H')=V(H)$.
    We observe here that the map $h$ described in \Cref{thm:isosubreal} for the pair $(G,H)$ is the same as the map $h$ given for the pair $(G',H')$.
    Hence,
    by \eqref{eq:subeq} given in \Cref{thm:isosubreal},
    we have
    \begin{equation*}
        c_d(G') = (\deg h) c_d(H') = \frac{c_d(G)}{c_d(H)} c_d(H').
    \end{equation*}
    Now suppose that $V(H') \supsetneq V(H)$.
    We apply $k = |V(H')\setminus V(H)|$ $d$-dimensional 0-extensions to $H$ (and hence also $G$) to get the graphs $H''$ and $G''$ such that $V(H'')=V(H')$ and $V(G'') = V(G')$.
    As $c_d(G'') = 2^k c_d(G)$ and $c_d(H'') = 2^k c_d(H)$ by \Cref{lem:0ext} and \Cref{prop:0extreal},
    it now follows from the previous case that
    \begin{equation*}
        c_d(G') = \frac{c_d(H')}{c_d(H'')} c_d(G'') = \frac{c_d(H')}{2^kc_d(H)} 2^k c_d(G)= \frac{c_d(H')}{c_d(H)} c_d(G).\qedhere
    \end{equation*}    
\end{proof}

\subsection{Special cases of 1-extensions, X-replacements and V-replacements}

Using \Cref{cor:rigidsubgraphsub}, we can describe how the realisation number changes under a special case of \emph{$d$-dimensional 1-extension}.
This operation takes a graph $G = (V,E)$ with edge $v_1v_2 \in E$ and distinct vertices $v_3, \dots, v_{d+1}$, and outputs the graph $G' = (V',E')$ with
\[
V' = V \cup \{v_0\} \, , \quad E' = E \setminus \{v_1v_2\} \cup \{v_0v_i \colon 1 \leq i \leq d+1\} \, .
\]
We show that when $v_1, \dots, v_{d+1}$ form a clique in $G$, this operation doubles the realisation number.
This proves Conjectures 1 and 3 of Grasegger \cite{gras25}, which are the cases where $d=2,3$ respectively.

\begin{corollary}\label{cor:1extspecial}
    Let $G$ be a minimally $d$-rigid graph containing a clique with $d+1$ vertices $v_1,\ldots,v_{d+1}$.
    Let $G'$ be the graph formed from $G$ by adding a new vertex $v_0$ adjacent to $v_1,\ldots,v_{d+1}$ and deleting the edge $v_1v_2$.
    Then $c_d(G') = 2 c_d(G)$.
\end{corollary}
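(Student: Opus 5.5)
We apply \Cref{cor:rigidsubgraphsub} with $H$ the clique $K_{d+1}$ on $\{v_1,\ldots,v_{d+1}\}$ and $H'$ the graph on $\{v_0,v_1,\ldots,v_{d+1}\}$ obtained from $K_{d+2}$ by deleting the edge $v_1v_2$. Then $G'$ is exactly the result of substituting $H'$ for $H$ in $G$. The new vertex set is $W'=W\cup\{v_0\}$ with $v_0\notin V$, so the condition $W'\setminus V=W'\setminus W$ holds.

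First we check the hypotheses. $H=K_{d+1}$ is $d$-rigid with $d+1$ vertices. It is also minimally $d$-rigid: it has $\binom{d+1}{2}=d(d+1)-\binom{d+1}{2}$ edges and is rigid, so it is independent. Hence we may take $\widetilde H=H$, and then $G-E(H)+E(\widetilde H)=G$ is minimally $d$-rigid by assumption.

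Next, $H'$ is $d$-rigid with $d+2$ vertices. It arises from $K_{d+1}$ by a $d$-dimensional 0-extension: add $v_0$ joined to $v_1,v_3,\ldots,v_{d+1}$, giving $K_{d+2}$ minus the edge $v_0v_2$. Relabelling $v_0\leftrightarrow v_2$ identifies this with $H'$, since $K_{d+2}-v_0v_2\cong K_{d+2}-v_1v_2$.

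Now \Cref{cor:rigidsubgraphsub} gives $c_d(G')=\frac{c_d(H')}{c_d(H)}c_d(G)$. We have $c_d(H)=c_d(K_{d+1})=1$, since $K_{d+1}$ is globally $d$-rigid: a simplex is determined up to congruence by its edge lengths, which in the complex setting follows from the Gram matrix / Cayley--Menger argument. By \Cref{lem:0ext}, $c_d(H')=2c_d(K_{d+1})=2$. Therefore $c_d(G')=2c_d(G)$.

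The only point that needs care is $c_d(K_{d+1})=1$ in the complex sense. For $d+1$ vertices, \Cref{def:count} uses general points with $|V|\ge d+1$. Pinned frameworks of a simplex are determined by the distances up to the $2^d$ sign choices from pinning, so $|\tilde f^{-1}|=2^d$ and \Cref{l:xgd} then gives $c_d=1$. Alternatively, this is the standard fact that complete graphs are globally rigid, as in \cite{Gortler2014}.
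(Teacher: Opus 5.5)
Your proof is correct and follows the paper's argument exactly. You make the same choice $H=K_{d+1}$ and $H'=K_{d+2}-v_1v_2$, obtain $c_d(H')=2$ from \Cref{lem:0ext}, and conclude with \Cref{cor:rigidsubgraphsub}, while also spelling out the checks the paper leaves implicit: $\widetilde H=H$ because $K_{d+1}$ is minimally $d$-rigid, and $c_d(K_{d+1})=1$.

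One harmless slip: the relabelling that carries $K_{d+2}-v_0v_2$ to $K_{d+2}-v_1v_2$ is $v_0\leftrightarrow v_1$, not $v_0\leftrightarrow v_2$. Alternatively, note directly that $H'$ is the 0-extension of the clique on $\{v_0,v_2,\ldots,v_{d+1}\}$ by the vertex $v_1$.
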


\begin{proof}
    Set $H$ to be the complete graph on the vertex set $\{v_1,\ldots,v_{d+1}\}$ and
    set $H'$ to be the graph formed from the complete graph on the vertex set $\{v_0, v_1,\ldots,v_{d+1}\}$ by removing the edge $v_1v_2$.
    Since $H'$ is formed from $K_{d+1}$ by a single $d$-dimensional 0-extension,
    we have $c_d(H') = 2$ by \Cref{lem:0ext}.
    The result now follows from \Cref{cor:rigidsubgraphsub}.
\end{proof}

Moreover, we can additionally prove \cite[Conjecture 4]{gras25}, which describes how the realisation number of $G$ can be altered under a special type of \emph{$X$-replacement} or \emph{$V$-replacement}.
A graph $G'$ is said to be obtained from another graph $G$ by
\begin{itemize}
\item a \emph{($d$-dimensional) $X$-replacement} if there exists non-adjacent edges $v_1v_2$ and $v_3v_4$ in $G$ such that $G'$ is $G - \{v_1v_2, v_3v_4\}$ plus an additional vertex $v_0$ of degree $d+2$ adjacent to $v_1, \dots, v_{d+2}$.
\item a \emph{($d$-dimensional) $V$-replacement} if there exists adjacent edges $v_1v_2$ and $v_2v_3$ in $G$ such that $G'$ is $G - \{v_1v_2, v_2v_3\}$ plus an additional vertex $v_0$ of degree $d+2$ adjacent to $v_1,\dots,v_{d+2}$.
\end{itemize}
We are particularly interested in the $d=3$ case, where these operations are key candidates for generating all minimally $3$-rigid graphs.
Even in this case, it is known that $V$-replacement does not preserve rigidity and $X$-replacement is only conjectured to.
However, if the induced subgraph of $G$ on $v_1, \dots, v_5$ is minimally $3$-rigid, then rigidity is preserved and the 3-realisation number doubles.

\begin{figure}[tp]
\begin{center}
\begin{tikzpicture}[scale=0.8]

\draw (0,0) circle (30pt);
\draw (4,0) circle (30pt);

\filldraw (4,2) circle (3pt);

\filldraw (.5,-.5) circle (3pt);
\filldraw (.5,.5) circle (3pt);
\filldraw (4.5,-.5) circle (3pt);
\filldraw (4.5,.5) circle (3pt);

\filldraw (-.5,-.5) circle (3pt);
\filldraw (-.5,.5) circle (3pt);
\filldraw (3.5,-.5) circle (3pt);
\filldraw (3.5,.5) circle (3pt);

\filldraw (0,0) circle (3pt);
\filldraw (4,0) circle (3pt);

\draw[black,thick]
(.5,.5) -- (.5,-.5);

\draw[black,thick]
(-.5,.5) -- (-.5,-.5);

\draw[black,thick]
(4.5,-.5) -- (4,2) -- (4.5,.5);

\draw[black,thick]
(3.5,-.5) -- (4,2) -- (4,0);

\draw[black,thick]
(3.5,.5) -- (4,2);

\draw[black]
(1.5,0) -- (2.5,0);

\draw[black,thick]
(2.4,-.2) -- (2.5,0) -- (2.4,.2);


\draw (8,0) circle (30pt);
\draw (12,0) circle (30pt);

\filldraw (12,2) circle (3pt);

\filldraw (8.5,-.5) circle (3pt);
\filldraw (8.5,.5) circle (3pt);
\filldraw (12.5,-.5) circle (3pt);
\filldraw (12.5,.5) circle (3pt);

\filldraw (7.5,-.5) circle (3pt);
\filldraw (7.5,.5) circle (3pt);
\filldraw (11.5,-.5) circle (3pt);
\filldraw (11.5,.5) circle (3pt);

\filldraw (8,0) circle (3pt);
\filldraw (12,0) circle (3pt);

\draw[black,thick]
(8.5,.5) -- (8.5,-.5);

\draw[black,thick]
(7.5,-.5) -- (8.5,-.5);

\draw[black,thick]
(12.5,-.5) -- (12,2) -- (12.5,.5);

\draw[black,thick]
(11.5,-.5) -- (12,2) -- (12,0);

\draw[black,thick]
(11.5,.5) -- (12,2);

\draw[black]
(9.5,0) -- (10.5,0);

\draw[black,thick]
(10.4,-.2) -- (10.5,0) -- (10.4,.2);
\end{tikzpicture}
\caption{Illustration of $X$-replacement and $V$-replacement in three dimensions.}
\label{fig:xvpic}
\end{center}
\end{figure}
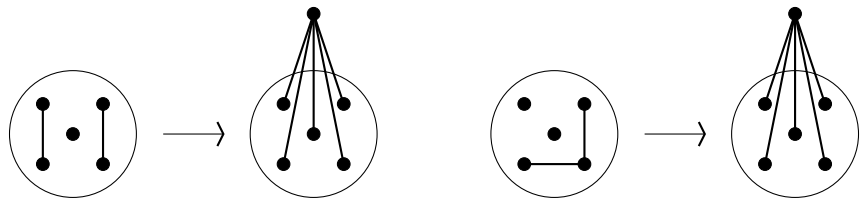

\begin{corollary}\label{cor:xvreplacement}
    Let $G$ be a minimally 3-rigid graph containing a minimally 3-rigid subgraph $H$ with 5 vertices.
    Let $G'$ be a minimally 3-rigid graph formed from $G$ by deleting two edges $e,f \in E(H)$ and adding a new vertex $x$ adjacent to each vertex in $V(H)$.
    Then $c_3(G') = 2c_3(G)$.
\end{corollary}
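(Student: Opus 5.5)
The plan is to apply \Cref{cor:rigidsubgraphsub} with $H$ the given minimally 3-rigid subgraph on 5 vertices, and $H'$ the graph with vertex set $V(H)\cup\{x\}$ and edge set $E(H)\setminus\{e,f\}$ together with the five edges joining $x$ to $V(H)$. Then $G'$ is exactly the rigid subgraph substitution of $H$ by $H'$ in $G$, and the new vertex condition $V(H')\setminus V = V(H')\setminus V(H)=\{x\}$ holds. The hypotheses of \Cref{cor:rigidsubgraphsub} are immediate: take $\tilde H = H$, so that $G - E(H) + E(\tilde H) = G$ is minimally 3-rigid. It then remains to check that $H'$ is 3-rigid and to compute $c_3(H')/c_3(H) = 2$.

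For the rigidity of $H'$, I will use the hypothesis that $G'$ is minimally 3-rigid. Since $G'$ is minimally rigid, every subgraph of $G'$ is independent in the generic 3-dimensional rigidity matroid. $H'$ has $6$ vertices and $|E(H)|-2+5 = (3\cdot 5-6)+3 = 12 = 3\cdot 6-6$ edges. Being independent with the right count, $H'$ is minimally 3-rigid.

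The main step is to show $c_3(H') = 2c_3(H)$, and here the small size should help. A minimally 3-rigid graph on 5 vertices has 9 edges, so it is $K_5$ minus one edge, which is $K_4$ plus a 0-extension; hence $c_3(H)=2$ by \Cref{lem:0ext}. The graph $H'$ has 6 vertices and 12 edges, with $x$ of degree 5. I will determine it up to the position of $e,f$:
\begin{itemize}
\item If $e,f$ together with the missing edge of $H$ form a path or star pattern, $H'$ might contain a degree-3 vertex. In that case $H'$ is a 0-extension of a 5-vertex minimally rigid graph, giving $c_3(H')=4$.
\item Otherwise the complement of $H'$ is a perfect matching on the remaining vertices, making $H'$ the octahedron. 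Its 3-realisation number is $4$, known from the literature or the database \cite{bartzosLeg}.
\end{itemize}

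The obstacle is this finite case check. I would enumerate the complement of $H'$ in $K_6$, which consists of $3$ edges avoiding $x$. The options are a triangle, a path, a star, a matching, or a path of length two plus a disjoint edge, and each case can be reduced either to a 0-extension or to the octahedron. The triangle and star cases give a vertex of degree 2 or fewer $\ge 3$-structure, so $H'$ fails to be rigid; these are excluded by the rigidity established above.

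With $c_3(H')=4$ in every remaining case, \Cref{cor:rigidsubgraphsub} gives $c_3(G') = \frac{4}{2}c_3(G) = 2c_3(G)$.
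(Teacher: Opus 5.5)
Your reduction matches the paper's: apply \Cref{cor:rigidsubgraphsub} with $\tilde H = H$, use $c_3(H)=2$, and reduce to showing $c_3(H')=4$. Your count argument that $H'$ is minimally 3-rigid is also correct. The paper gets $c_3(H')=4$ from the database, which lists three minimally 3-rigid 6-vertex graphs with a degree-5 vertex, each obtained from $K_4$ by 0-extensions.

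\textbf{The gap is in the hand case analysis you use instead of that lookup.} The complement of $H'$ in $K_6$ is $\{m,e,f\}$, where $m$ is the non-edge of $H$; these are three edges inside the five vertices of $H$.

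\emph{The triangle case.} You discard it as non-rigid, but each triangle vertex has degree $5-2=3$ in $H'$, not at most $2$. In that case $H'$ is $K_6$ minus a triangle. This is a 0-extension of $K_5$ minus an edge, hence rigid with $c_3(H')=4$. The case really occurs: take $G=H=K_5-v_1v_2$, $e=v_2v_3$, $f=v_1v_3$; then $G'=H'$ is minimally 3-rigid. So your proof as written omits a genuine case.

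\emph{The ``matching'' case.} This case is empty. Three pairwise disjoint edges need six vertices, but all three lie in the five vertices of $H$. Equivalently, $\deg_{H'}(x)=5$, so $H'$ is never the 4-regular octahedron. That is fortunate, because the value $4$ you assign to the octahedron is not supported. The paper only proves $c_3\ge 2^{6-4}$ for it. The octahedron is the cyclohexane graph, which I believe has $16$ complex realisations up to rotations and translations, i.e.\ $8$ in this paper's normalisation.

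\textbf{A uniform repair.} This avoids both the database and the case split. The three complement edges have degree sum $6$ over five vertices, so some $v\in V(H)$ has complement-degree $2$ or $3$. Complement-degree $3$ (the star) would give $\deg_{H'}(v)=2$, contradicting the rigidity of $H'$ you already proved. Hence $\deg_{H'}(v)=3$. Then $H'-v$ is an independent graph with $9=3\cdot 5-6$ edges on five vertices, i.e.\ $K_5$ minus an edge. Two applications of \Cref{lem:0ext} give $c_3(H')=2\cdot 2\cdot c_3(K_4)=4$. With this fix your argument is correct and slightly more self-contained than the paper's.
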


\begin{proof}
    Fix $H'$ to be the subgraph of $G'$ induced by the vertex set $V(H) \cup \{x\}$.
    By \Cref{cor:rigidsubgraphsub},
    $c_3(G') = c_3(G) \frac{c_3(H')}{c_3(H)}$,
    so it suffices to show that $c_3(H') = 2c_3(H)$.
    There is exactly one minimally 3-rigid graph with 5 vertices, namely $K_5$ minus an edge, and this has a 3-realisation number of 2.
    Moreover, there are exactly three minimally 3-rigid graphs with 6 vertices and a degree 5 vertex as listed in the database \cite{bartzosLeg};
    these are given in \Cref{fig:3rigid6vertex}.
    Since each graph in \Cref{fig:3rigid6vertex} can be constructed from $K_4$ using 3-dimensional 0-extensions, they each have a 3-realisation number of 4.
    Hence $c_3(H') = 2c_3(H)$ as required.
\end{proof}

\begin{figure}
    \centering
    \begin{tikzpicture}[scale=1]
			\node[vertex] (0) at (-2,0) {};
			\node[vertex] (1) at (0,0) {};
			\node[vertex] (2) at (2,0) {};
			
			\node[vertex] (3) at (-1,-1) {};
			\node[vertex] (4) at (1,-1) {};
			\node[vertex] (5) at (0,1) {};

			\draw[edge] (0)edge(3);
			\draw[edge] (0)edge(4);
			\draw[edge] (1)edge(3);
			\draw[edge] (1)edge(4);
            \draw[edge] (2)edge(3);
			\draw[edge] (2)edge(4);
			\draw[edge] (3)edge(4);

            \draw[edge] (0)edge(5);
            \draw[edge] (1)edge(5);
            \draw[edge] (2)edge(5);
            \draw[edge] (3)edge(5);
            \draw[edge] (4)edge(5);
		\end{tikzpicture}
        \qquad
            \begin{tikzpicture}[scale=1]
			\node[vertex] (0) at (-2,0) {};
			\node[vertex] (1) at (0,0) {};
			\node[vertex] (2) at (2,0) {};
			
			\node[vertex] (3) at (-1,-1) {};
			\node[vertex] (4) at (1,-1) {};
			\node[vertex] (5) at (0,1) {};

			\draw[edge] (0)edge(3);
			\draw[edge] (0)edge(1);
			\draw[edge] (1)edge(3);
			\draw[edge] (1)edge(4);
            \draw[edge] (2)edge(1);
			\draw[edge] (2)edge(4);
			\draw[edge] (3)edge(4);

            \draw[edge] (0)edge(5);
            \draw[edge] (1)edge(5);
            \draw[edge] (2)edge(5);
            \draw[edge] (3)edge(5);
            \draw[edge] (4)edge(5);
		\end{tikzpicture}
        \qquad
            \begin{tikzpicture}[scale=1]
			\node[vertex] (0) at (-2,0) {};
			\node[vertex] (1) at (0,0) {};
			\node[vertex] (2) at (2,0) {};
			
			\node[vertex] (3) at (-1,-1) {};
			\node[vertex] (4) at (1,-1) {};
			\node[vertex] (5) at (0,1) {};

			\draw[edge] (0)edge(3);
			\draw[edge] (0)edge(4);
			\draw[edge] (1)edge(0);
			\draw[edge] (1)edge(2);
            \draw[edge] (2)edge(3);
			\draw[edge] (2)edge(4);
			\draw[edge] (3)edge(4);

            \draw[edge] (0)edge(5);
            \draw[edge] (1)edge(5);
            \draw[edge] (2)edge(5);
            \draw[edge] (3)edge(5);
            \draw[edge] (4)edge(5);
		\end{tikzpicture}
    \caption{The three minimally 3-rigid graphs on six vertices with a vertex of degree 5.}
    \label{fig:3rigid6vertex}
\end{figure}
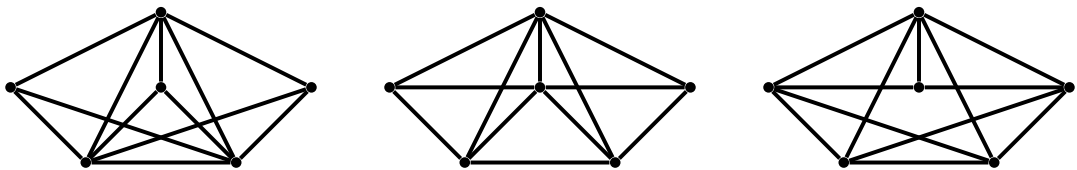

\subsection*{Acknowledgements}

We thank an anonymous referee for careful reading and helpful comments.
S.\,D.\ was supported by the Heilbronn Institute for Mathematical Research,
and partially supported by the KU Leuven grant iBOF/23/064 and the FWO grants G0F5921N (Odysseus) and G023721N.
A.\,N.\ and B.\,S.\ were partially supported by EPSRC grant EP/X036723/1. A.\,N.\ was partially supported by UK Research and Innovation (grant number UKRI1112), under the EPSRC Mathematical Sciences Small Grant scheme.
For the purpose of open access, the authors have applied a Creative Commons Attribution (CC-BY) licence to any Author Accepted Manuscript version arising.

\bibliographystyle{plainurl}
{\small
\bibliography{ref}
}

\appendix

\section{Genericity and quasi-genericity} \label{sec:appendix}

We recall that a realisation $p$ of a graph $G=(V,E)$ is \emph{generic} if $\mathrm{td}[\mathbb{Q}(p): \mathbb{Q}] = d|V|$,
and \emph{quasi-generic} if it is congruent to a generic realisation.

\begin{lemma}[{\cite[Lemma B.1]{dewar23}}]\label{lem:canonical+position}
	Let $G=(V,E)$ be a graph with $|V| \geq d+1$,
	and fix a sequence of $d$ vertices $v_1,\ldots, v_d$.
	For each realisation $p \in (\mathbb{C}^d)^V$,
	define the $(d-1) \times (d-1)$ symmetric matrix
	\begin{align*}
		\mathbb{G}(p) := 		
		\begin{bmatrix}
			(p(v_2) - p(v_1))^{\top} \\
			\vdots \\
			(p(v_d) - p(v_1))^{\top}
		\end{bmatrix}
		\begin{bmatrix}
			p(v_2) - p(v_1) & \cdots & p(v_d) - p(v_1)
		\end{bmatrix}.
	\end{align*}
	Then there exists a realisation $q \in X_{G,d}$ congruent to $p$ if $\mathbb{G}(p)$ only has non-zero leading principal minors.\footnote{A \emph{leading principal minor (of order $n$)} of a square matrix is the determinant of the matrix formed by taking the first $n$ rows and columns.}
\end{lemma}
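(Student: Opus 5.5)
Given $p$, I will build an explicit isometry $x \mapsto Ax+b$ with $A \in O(d,\CC)$, mapping $p$ into $X_{G,d}$, via a complex Gram--Schmidt process. Throughout, $\cdot$ is the bilinear form $x\cdot y = x^{\top}y$, with no conjugation. First I translate by $b=-p(v_1)$, so $v_1$ goes to the origin, and set $u_i := p(v_{i+1})-p(v_1)$ for $1\le i\le d-1$. The matrix $\mathbb{G}(p)$ is exactly the Gram matrix $(u_i\cdot u_j)_{i,j\in[d-1]}$. The pinning conditions for $X_{G,d}$ then say the following: after applying $A$, the vector $Au_i$ must lie in $\CC^{i}\times\{0\}^{d-i}$ for each $i$. (The constraint on $v_1$ is handled by the translation.)

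The key step is to run Gram--Schmidt with respect to the bilinear form. Define $e_1,\dots,e_{d-1}$ inductively: $\tilde e_k := u_k - \sum_{j<k}(u_k\cdot e_j)e_j$ and $e_k := \tilde e_k/\sqrt{\tilde e_k\cdot \tilde e_k}$, choosing any complex square root. For this to make sense I need $\tilde e_k\cdot\tilde e_k\neq 0$. I will show by induction that $\tilde e_k\cdot \tilde e_k = \Delta_k/\Delta_{k-1}$, where $\Delta_k$ is the leading principal minor of $\mathbb{G}(p)$ of order $k$ and $\Delta_0:=1$. The reason is that the change of basis from $(u_1,\dots,u_k)$ to $(\tilde e_1,\dots,\tilde e_k)$ is unitriangular, so the Gram determinants agree, and the Gram matrix of the pairwise orthogonal $\tilde e_j$ is diagonal. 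Hence the hypothesis that all leading principal minors are non-zero is exactly what keeps the process going. I expect this to be the main point requiring care, since over $\CC$ isotropic vectors can otherwise break the procedure.

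Next I extend $e_1,\dots,e_{d-1}$ to an orthonormal basis $e_1,\dots,e_d$ of $(\CC^d,\cdot)$. The span $W$ of the $e_i$ is non-degenerate because its Gram matrix is the identity. Therefore $\CC^d = W\oplus W^{\perp}$ with $W^\perp$ one-dimensional and non-degenerate, so $W^\perp$ contains a vector $e_d$ with $e_d\cdot e_d=1$.

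Finally I let $A$ be the matrix with rows $e_1^{\top},\dots,e_d^{\top}$. Then $AA^{\top}=I_d$, so $A\in O(d,\CC)$. Since $u_i\in\operatorname{span}(e_1,\dots,e_i)$ by construction, the vector $Au_i$ has coordinates $u_i\cdot e_j$, and these vanish for $j>i$. Setting $q(v):=A(p(v)-p(v_1))$ therefore gives $q(v_1)=\mathbf 0$ and $q_j(v_{k})=0$ for all $1\le k\le j\le d$. So $q\in X_{G,d}$ and $q$ is congruent to $p$.
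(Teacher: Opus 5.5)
Your proof is correct and complete. The Gram--Schmidt process for the bilinear form $x\cdot y=x^{\top}y$ never stalls, because $\tilde e_k\cdot\tilde e_k=\Delta_k/\Delta_{k-1}\neq 0$; you justify this properly via the unitriangular change of basis. Non-degeneracy of $W$ gives the final unit vector $e_d$. The resulting $A$ satisfies $AA^{\top}=I_d$ and sends each $u_i$ into $\CC^{i}\times\{0\}^{d-i}$, which is exactly the pinning condition defining $X_{G,d}$.

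The paper itself gives no proof of this lemma; it is quoted from \cite[Lemma B.1]{dewar23}, so there is no in-text argument to compare against. Your construction amounts to a complex Cholesky factorisation $\mathbb{G}(p)=LL^{\top}$ with $L_{ij}=u_i\cdot e_j$, and the rows of $L$, padded with zeros, are precisely the pinned coordinates $q(v_2),\dots,q(v_d)$. A route that only factorises $\mathbb{G}(p)$ and places the pinned points accordingly would still need a separate Witt-type extension argument to get from equal Gram matrices to congruence under $O(d,\CC)$. Your approach avoids this by building the isometry explicitly, which also makes clear where the non-vanishing of the leading principal minors is used.
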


\begin{lemma}\label{lem:quasixgd}
    Let $(G,p)$ be a quasi-generic rigid framework in $\mathbb{C}^d$ with at least $d+1$ vertices.
    Then for any choice of vertices $v_1,\ldots,v_d$ defining the space $X_{G,d}$,
    there exists a congruent framework $(G,q)$ with $q \in X_{G,d}$.
\end{lemma}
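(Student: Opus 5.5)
The plan is to reduce to \Cref{lem:canonical+position}: it suffices to find a framework congruent to $p$ whose matrix $\mathbb{G}$ has only non-zero leading principal minors. Being quasi-generic, $p$ is congruent to some generic realisation $p^*$. By \Cref{lem:canonical+position} it is enough to show that $\mathbb{G}(p^*)$ has non-zero leading principal minors, since then $p^*$ is congruent to some $q \in X_{G,d}$. As congruence is transitive, $q$ is then also congruent to $p$.

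For the minors, note that the leading principal minor of order $k$ of $\mathbb{G}(p^*)$ is the Gram determinant
\[
\det\bigl( (p^*(v_{i+1}) - p^*(v_1)) \cdot (p^*(v_{j+1}) - p^*(v_1)) \bigr)_{i,j \in [k]}.
\]
This is a polynomial with rational coefficients in the coordinates of $p^*$. Since those coordinates are algebraically independent over $\QQ$, the minor vanishes at $p^*$ only if the polynomial is identically zero. So it suffices to show each of these polynomials is non-zero.

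To see this, I exhibit one explicit realisation where all the minors are non-zero. Choose $p(v_1) = \mathbf{0}$ and $p(v_{i+1}) = e_i$ (standard basis vectors) for $i \in [d-1]$. This uses only $d$ distinct vertices, which exist since $|V| \geq d+1$. Then $\mathbb{G}$ is the identity matrix, and every leading principal minor equals $1$.

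The only subtle point is that over $\CC$ the quadratic form is not definite, so non-vanishing of Gram determinants is not automatic from affine independence. The explicit real witness above together with the algebraic-independence argument handles this cleanly. Rigidity of $(G,p)$ is not actually needed beyond the standing assumption $|V| \geq d+1$.
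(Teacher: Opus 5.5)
Your proof is correct and takes essentially the same route as the paper: pass to a generic realisation congruent to $p$, note that its Gram matrix $\mathbb{G}$ has non-vanishing leading principal minors, and invoke \Cref{lem:canonical+position}. The differences are cosmetic. The paper applies that lemma to $p$ directly, using $\mathbb{G}(p)=\mathbb{G}(p^*)$ (invariance under isometries), whereas you apply it to $p^*$ and use transitivity of congruence. You also supply the explicit witness showing the minors are non-zero rational polynomials, which the paper merely asserts.
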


\begin{proof}
    Let $(G,p')$ be a generic realisation that is equivalent to $(G,p)$.
    It follows from $p,p'$ being congruent that $\mathbb{G}(p') = \mathbb{G}(p)$.
    As $p'$ is generic, the matrix $\mathbb{G}(p')$ has only non-zero leading principal minors.
    The result now holds by \Cref{lem:canonical+position}.
\end{proof}

The next lemma follows using the argument given in~\cite[Lemma 3.4]{JO19} since we make the additional assumption that $(G,p)$ is rigid.

\begin{lemma}\label{lem:generic+can+pos}
Let $(G,p)$ be a quasi-generic rigid framework in $\mathbb{C}^d$.
Then $\|p(v)- p(w)\|^2 \neq 0$ for each distinct pair $v,w \in V$, and $\td[\QQ(f_{G,d}(p)) \colon \QQ] = \rank R(G,p)$.
Furthermore, if $p$ is in $X_{G,d}$ then $\overline{\QQ(p)} = \overline{\QQ(f_{G,d}(p))}$.
\end{lemma}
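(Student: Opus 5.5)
We must prove \Cref{lem:generic+can+pos}: for a quasi-generic rigid framework $(G,p)$ we have (i) $\|p(v)-p(w)\|^2\neq 0$ for all distinct $v,w$; (ii) $\td[\QQ(f_{G,d}(p)):\QQ]=\rank R(G,p)$; and (iii) if $p\in X_{G,d}$ then $\overline{\QQ(p)}=\overline{\QQ(f_{G,d}(p))}$.

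Throughout, fix a generic $p'$ congruent to $p$, say $p=Ap'+b$ with $A\in O(d,\CC)$. Since $\|Ax\|^2=\|x\|^2$, all pairwise squared distances agree: $f_{K_V,d}(p)=f_{K_V,d}(p')$. In particular $f_{G,d}(p)=f_{G,d}(p')$. Also $R(G,p)=R(G,p')\,\mathrm{diag}(A^{\top},\dots,A^{\top})$, so the two rigidity matrices have the same rank.

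For (i), note that $\|p(v)-p(w)\|^2=\|p'(v)-p'(w)\|^2$. This is a nonzero polynomial with rational coefficients in the algebraically independent coordinates of $p'$, so it cannot vanish.

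For (ii), it suffices to work with $p'$, since $\QQ(f_{G,d}(p))=\QQ(f_{G,d}(p'))$ and the ranks agree. The map $f_{G,d}$ is defined over $\QQ$ and $p'$ is a generic point of $(\CC^d)^V$. Hence $f_{G,d}(p')$ is a generic point of the irreducible variety $\ell_d(G)$, which is also defined over $\QQ$ as the closure of the image of a $\QQ$-morphism. The transcendence degree of a generic point equals $\dim \ell_d(G)$. By \Cref{lem:dominant}, $\dim\ell_d(G)$ equals the rank of the Jacobian at a general point, i.e.\ $\rank R(G,p')$ for generic $p'$ (generic points lie in every $\QQ$-defined Zariski open set). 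This rank equals $\rank R(G,p)$. To justify the generic-point claim, observe that any polynomial over $\QQ$ vanishing at $f_{G,d}(p')$ pulls back to a $\QQ$-polynomial vanishing at $p'$. That pullback is therefore identically zero, so the polynomial vanishes on the whole image.

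For (iii), assume $p\in X_{G,d}$. The inclusion $\QQ(f_{G,d}(p))\subseteq\QQ(p)$ is immediate, so it remains to show $\QQ(p)$ is algebraic over $K:=\QQ(f_{G,d}(p))$. First, $\td[\QQ(p):\QQ]\le \dim X_{G,d}=d|V|-\binom{d+1}{2}$. Rigidity together with genericity of $p'$ gives infinitesimal rigidity (Asimow--Roth, noting $|V|\geq d+1$), so $\rank R(G,p)=d|V|-\binom{d+1}{2}$. By (ii), $\td K$ equals this number. Since $K\subseteq\QQ(p)$ and the transcendence degrees are at least equal, $\QQ(p)$ is algebraic over $K$, and the algebraic closures coincide.

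The main obstacle is the case $|V|\le d$ in the rank count, where the formula $d|V|-\binom{d+1}{2}$ fails. In that case I would restrict to $|V|\ge d+1$, which is implicit in the use of $X_{G,d}$; in the complete small case one instead uses the simplex rank $\binom{|V|}{2}$ and argues identically.
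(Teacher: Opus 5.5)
Your proof is correct and follows essentially the route the paper takes by deferring to Jackson--Owen's Lemma~3.4. You transfer squared distances, $f_{G,d}$ and the rank of the rigidity matrix to the congruent generic realisation $p'$, identify $\td[\QQ(f_{G,d}(p')):\QQ]$ with the Jacobian rank at the generic point, and in the pinned case compare $\td[\QQ(p):\QQ]\le d|V|-\binom{d+1}{2}$ with the rank $d|V|-\binom{d+1}{2}$ supplied by rigidity, which is exactly where the extra rigidity hypothesis enters (your detour through $\dim\ell_d(G)$ in (ii) could be replaced by the Jacobian criterion for algebraic independence applied at $p'$, but it is valid as written).
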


We now have the necessary tools to prove the following lemma from \Cref{sec:prelims}:

\genericimpliesquasi*


\begin{proof}
    \Cref{lem:generic+can+pos} with the assumption that $(G,p)$ is rigid implies that 
    $$\textrm{td}[\QQ(f_{G,d}(p)) \colon \QQ] = \rank R(G,p) = d|V| - \binom{d+1}{2}.$$
    As $f_{G,d}(p) = f_{G,d}(q)$, using \Cref{lem:canonical+position} we can deduce that there exists a framework $(G,q^*)$ in $X_{G,d}$ that is congruent to $(G,q)$.
    Moreover, \Cref{lem:generic+can+pos} gives 
    \[
    \textrm{td}[\QQ(q^*)\colon\QQ] = \textrm{td}[\QQ(f_{G,d}(q^*))\colon\QQ] = \textrm{td}[\QQ(f_{G,d}(p)) \colon \QQ] = d|V| - \binom{d+1}{2} \, ,
    \]
    and hence some set $I$ of $d|V| - \binom{d+1}{2}$ coordinates of $q^*$ are algebraically independent.
    Now choose $A \in O(d,\mathbb{C})$ and $x \in \mathbb{C}^d$ so that the combined set of entries from the upper left triangle of $A$ and the coordinates of $x$ are algebraically independent over $\QQ(q^*)$.
    When we apply the affine transformation of $(A,x)$ to $q^*$,
    we get a generic framework $(G,\tilde{q})$. 
    As $(G,q)$ is congruent to $(G,\tilde{q})$, it is quasi-generic.
\end{proof}

Recall that for us, a generic point $p \in \CC^n$ is a point whose coordinates are all algebraically independent over $\QQ$.
To prove \Cref{thm:generic-implies-general}, we utilise a standard result on generic points in $\CC^n$ under polynomial maps with rational coefficients: see \cite[Lemma 3.2(c)]{JO19} for a proof.

\begin{lemma}\label{lem:generic+fibre+cardinality}
Let $f \colon \CC^n \rightarrow \CC^m$ by $f(p) = (f_1(p), \dots, f_m(p))$  where $f_i \in \QQ[X_1, \dots, X_m]$ for $1 \leq i \leq m$, and let $W(p) = \{q \in \CC^n \; \colon \; f(p) = f(q)\}$ for each $p \in \CC^n$.
If $p$ is generic and $df(p)$ has rank $n$, then $W(p)$ is finite and $|W(p)| = |W(q)|$ for all generic $q \in \CC^n$.
\end{lemma}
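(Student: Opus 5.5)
The statement is a standard algebraic fact, and I would prove it via field-theoretic conjugation. Since $p$ is generic, $\QQ(p)$ is a purely transcendental extension of $\QQ$ of transcendence degree $n$. Since $df(p)$ has rank $n$ and the $f_i$ have rational coefficients, the values $f(p)$ have transcendence degree $n$ over $\QQ$; indeed, the Jacobian criterion says the $f_i$ evaluated at a generic point are algebraically independent exactly when the Jacobian has full rank there. Consequently every $q\in W(p)$ satisfies $\QQ(f(p))\subseteq \QQ(q)$, so $\td[\QQ(q):\QQ]\geq n$. Hence every $q\in W(p)$ is itself generic, and its coordinates are algebraic over $K:=\QQ(f(p))$.

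For finiteness, I would use the general-fibre statement of \Cref{lem:dominant}. Full rank at one point makes $f$ dominant onto $\CC^n$ (so $m\geq n$, and the closure of the image is an $n$-dimensional variety $Y$ defined over $\QQ$). Thus there is a Zariski open set $U\subseteq Y$, which may be taken to be defined over $\QQ$ or $\overline{\QQ}$, on which fibres are finite of constant cardinality $N$ (the degree of $f$ onto its image). Because $f(p)$ is a generic point of $Y$, in the sense that it satisfies no polynomial relation over $\QQ$ beyond those defining $Y$, it avoids the proper closed set $Y\setminus U$. That complement is cut out by polynomials over a number field, and any polynomial vanishing on $f(p)$ has rational-over-$\overline{\QQ}$ coefficients and so vanishes on all of $Y$. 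So $f(p)\in U$ and $|W(p)|=N$.

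The same argument applies verbatim to any generic $q$. Here I would check that $df(q)$ also has rank $n$: a minor that is nonzero at $p$ is a nonzero rational polynomial, hence nonzero at every generic point. So $|W(q)|=N$ as well.

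The main obstacle is making the claim ``the exceptional locus is defined over $\overline{\QQ}$'' rigorous. I would handle it by noting that the fibre cardinality is governed by the discriminant and leading coefficients of an elimination (Gröbner or resultant) computation over $\QQ$, which produces a nonzero polynomial $D\in\QQ[y]$, nonvanishing on $Y$, such that $|W(y')|=N$ whenever $D(y')\neq 0$. Genericity of $p$ gives $D(f(p))\neq 0$. An alternative route is Galois-theoretic. An automorphism of $\CC$ fixing $\QQ$ and sending $p$ to $q$ exists, because both points are generic, and it maps $W(p)$ bijectively onto $W(q)$ since $f$ has rational coefficients. This argument gives $|W(p)|=|W(q)|$ directly, and I would present it as the cleaner argument for the cardinality claim, with finiteness coming from $W(p)$ being zero-dimensional at each point by full rank together with the algebraicity over $K$ established in the first step.
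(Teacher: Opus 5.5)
Your final argument is correct and is the standard proof of this fact. The transcendence-degree count shows every $q\in W(p)$ is generic and algebraic over $\QQ(f(p))$. The nonzero rational Jacobian minor then makes every point of $W(p)$ isolated, so $W(p)$ is finite. Finally, an automorphism of $\CC$ over $\QQ$ taking $p$ to a generic $q$ carries $W(p)$ bijectively onto $W(q)$. The paper gives no proof of its own and simply defers to \cite[Lemma 3.2(c)]{JO19}.

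You can drop the first route via \Cref{lem:dominant}. As written it slips: $f$ is dominant onto the $n$-dimensional closure $Y\subseteq\CC^m$ of its image, not onto $\CC^n$. Also, \Cref{lem:dominant} only controls the dimension of general fibres, not their cardinality.
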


\genericimpliesgeneral*

\begin{proof}
The result is trivially true if $|V| \leq d$, so assume $|V| \geq d+1$.
By \Cref{lem:quasixgd}, there exists some realisation $q \in X_{G,d}$ such that $(G,p)$ is congruent to $(G,q)$.
Recall that by the definition of the realisation space, we have $|C_d(G,p)| = |C_d(G,q)|$.
Recall that $X_{G,d} \cong \CC^{d|V| - \binom{d+1}{2}}$ as $\binom{d+1}{2}$ coordinates of $q$ have been pinned to zero.
By the same application of \Cref{lem:generic+can+pos} as in the proof of \Cref{lem:rigid+generic+implies+quasi}, we can deduce that $\td[\QQ(q) \colon \QQ] = d|V| - \binom{d+1}{2}$, and hence all the non-zero coordinates of $q$ are algebraically independent over $\QQ$.

We next apply \Cref{lem:generic+fibre+cardinality} to the pinned rigidity map $\tilde{f}_{G,d} \colon X_{G,d} \rightarrow \CC^{E}$.
The previous paragraph showed that $q$ is a generic point of $X_{G,d} \cong \CC^{d|V| - \binom{d+1}{2}}$.
Moreover, $(G,q)$ is infinitesimally rigid, hence \Cref{lem:pinnedrigmatrix} implies $d\tilde{f}_{G,d}(p)$ has rank $d|V|-\binom{d+1}{2}$.
Therefore \Cref{lem:generic+fibre+cardinality} 
implies that $|C(G,p)|$ is constant for all quasi-generic realisations of $G$.

Finally, let $U$ be the non-empty Zariski open subset of $(\CC^d)^V$ such that $|C_d(G,q)| = c_d(G)$ for all $q \in U$.
As there is no non-trivial polynomial relation that all generic points satisfy, at least one generic realisation $p$ is contained in $U$.
Finally, since the cardinality of $|C(G,p)|$ is constant for all quasi-generic realisations of $G$, we have $|C(G,p)|=c_d(G)$ for all quasi-generic $p$.
\end{proof}

\end{document}